\definecolor{c30}{rgb}{0,0,1}
\newtheorem{theorem}{Theorem}[section]
\newtheorem{lemma}[theorem]{Lemma}
\newtheorem{proposition}[theorem]{Proposition}
\theoremstyle{definition}
\newtheorem{example}[theorem]{Example}
\newtheorem{remark}[theorem]{Remark}
\numberwithin{equation}{section}
\title{On a finite quasi birth-death process with catastrophes\\
	and its diffusion approximation}
\author{Giulia Di Nunno \thanks{Department of Mathematics, University of Oslo;
		Department of Business and Management Science, NHH Norwegian School of Economics, giulian@math.uio.no} \and Barbara Martinucci \thanks{Department of Mathematics, University of Salerno, bmartinucci@unisa.it} \and Serena Spina \thanks{Department of Mathematics, University of Salerno, sspina@unisa.it}}
\date{\today}
\begin{document}
	
	\maketitle

\begin{abstract}
We study a multi-type Ehrenfest process modeled as a finite quasi-birth-death (QBD) process. We assume that the transitions are allowed only to the two adjacent levels of the same phase and are 
characterized by linear rates. The crucial element lies in the phase switching mechanism at the origin, which is governed by an irreducible stochastic matrix.
The process evolution is interrupted by catastrophic events, whose occurrences are controlled by a Poisson process. Each catastrophe resets the system state to zero, initiating a new cycle of evolution 
until the next resetting event. We conduct a comprehensive analysis, addressing both the transient and long-term behavior of this process. Furthermore, we derive a diffusive approximation, 
by proving its convergence to a reflected Ornstein-Uhlenbeck jump diffusion process.
\end{abstract}

\noindent\textbf{Keywords:}
Quasi Birth-death process, Diffusion process, Reflected Ornstein–Uhlenbeck process.

\section{Introduction}
\label{sec1}

Stochastic models of population dynamics, particle systems, and chemical kinetics frequently rely on structured Markov chains, such as birth-death (BD) processes, due to their tractability and rich probabi\-listic properties. 
While a traditional birth–death process allows transitions only between neighboring states, a quasi birth-death process (QBD) extends this idea to a two-dimensional setting, where states are organized into levels and phases.
Each level represents a quantity of interest - such as the number of customers in a queue or jobs in a system - while each phase captures additional system conditions, such as the status of servers or the environment. 
Transitions can occur within a level (phase changes), to the next higher level (a birth), or to the next lower level (a death). The properties of finite QBD processes have been studied extensively in continuous and discrete time. 
For a comprehensive discussion, see, the monograph \cite{Latouche}. In \cite{Latouche2002} a key aspect of quasi-birth-and-death (QBD) processes is analyzed, that is the probabilities of reaching level zero from level one, which are captured in the $q$-matrix, called $G$. If the process is recurrent, $G$ is a stochastic matrix. If it is not, $G$ becomes sub-stochastic, and a second, fully stochastic solution, exists.
\par
The transition rate matrix of a QBD process has a block-tridiagonal structure, which allows efficient analytical and numerical methods - such as the matrix-geometric method - to compute steady-state probabilities and performance measures. A common methodology to study such kind of processes is based on the use of first-step analysis, which allows us to derive systems of linear equations that characterize the distribution of the stochastic descriptors. Then one could solve the resulting systems of linear equations with specialized numerical methods or develop efficient algorithmic procedures that exploit the block-tridiagonal structure of the original QBD process. 
In \cite{DiCrescenzo2025}, for instance, the authors exploit a special property of the block-tridiagonal matrix to develop some computational algorithms. Such results are employed in two epidemic models: the SIS model for 
horizontally and vertically transmit\-ted diseases and the SIR model with constant population size. 
\par
An overview on the most recent results in the field of QBD processes with respect to numerical methods for the steady-state analysis of such processes is presented in \cite{Ost}.
The author reviewes the most efficient algorithms for the state-state analysis, comparing different algorithms both from a theoretical point of view and
in a practical case study in order to highlight the strengths and weaknesses of the various approaches. 
\par
Incorporating catastrophic events in BD end QBD processes has become a crucial aspect of stochastic population modeling, particularly in ecology, but also across various other fields such as economics, chemistry, and telecommunications.
The inclusion of large, sudden jumps  help explaining certain features observed in financial markets, such as returns on stock indices and currency exchange rates \cite{Takahashi}. In \cite{Dharmaraja},  the authors consider a continuous-time Ehrenfest model defined over the integers from $-N$ to $N$, and subject to catastrophes occurring at constant rate. The effect of each catastrophe instantaneously resets the process to state $0$. 
Another study of BD process with catastrophes is given in \cite{Cairns}, where the transition rates vary with population size in a general way. Despite this broader framework, the authors still derive explicit results for key quantities as the expected time to extinction.
\par
The study in \cite{Baumann} is focused on continuous-time level-dependent (LDQBD) processes that include catastro\-phic events, allowing the population level to suddenly drop to zero. These processes are useful for modeling queueing systems and other population dynamics. The authors develop a matrix analytic algorithm to compute the stationary distribution of such models, extending existing methods for LDQBD processes without catastrophes. The algorithm is applied to analyze $M/M/c$ queues in random envi\-ronments with state-dependent rates and catastrophes. 
\par
The manuscript \cite{Dudin} introduces a method to derive the generator of a multidimensional continuous-time Markov chain that models a queueing system with the possibility of disasters, based on the known generator of a standard QBD process without disasters. For the level-independent case, the stationary distribution of the system without disasters is obtained in matrix-geometric form.
 The interplay between gradual evolution and sudden reset events makes these models especially relevant for real-world phenome\-na characterized by persistence interspersed with abrupt changes.
\par
In the present paper we consider  an extension of \cite{DiCrescenzo2022}, by considering a finite quasi birth-death process subject to catastrophes. We assume that catastrophic events occur 
according to a Poisson process and that the resetting instantaneously takes the system to the state $0$. This mechanism gives rise to a QBD process with catastrophes as a model particularly suited to capture abrupt systemic changes such as collapses in population models, system failures in queueing theory, or information resets in communication networks.
Such finite QBD process can be  considered as a multi-type extension of the continuous-time Ehrenfest model, with state space formed by the integers $0,1,2,\ldots,N$ of 
$d$ lines joined at the origin, in the presence of reflecting boundaries and under the assumption of state-dependent transition rates. The particular case $d = 2$ corresponds to the one-dimensional Ehrenfest model, as described in \cite{Dharmaraja}. 
\par
We develop the mathematical framework for this class of models, analyze their transient and stationary behavior, and provide closed-form expressions for relevant quantities. We employ generating function techniques and matrix-analytic methods to characterize the system evolution and highlight the impact of catastrophic transitions on long-term dynamics. Importantly, while most of the existing literature on QBD processes focuses primarily on asymptotic or steady-state analyses, our results provide novel insights into the finite-time behavior and transient characteristics of such systems, thereby enhancing their applicability to realistic modeling scenarios. 
 Furthermore, we characterize the model long-term evolution by exploring its large-scale behavior through a diffusive approximation, which offers a powerful and tractable analytical framework.
\section{The model}

Let us consider a finite quasi-birth-death (QBD) process ${\bf X}_t\!\!:=\{({\cal N}(t),J(t)), t\geq 0\}$, whose two-dimensional state space is partitioned as $\bigcup_{k=0}^{N} l(k)$, 
where $l(k)=\left\{(k,1),\,(k,2),\ldots,\,(k,d)\right\}$, $k=0,\ldots, {N}$, with $d, {N} \in {\mathbb N}^+$. In each state $(k,j)$, the first coordinate $k$ represents the {\em  level}, 
while the second coordinate $j$ denotes the {\em phase}. Typically, the process ${\cal N}(t)$ counts the number of presences in the system at time $t$, such as 
the population size, the queue length, or some other quantity that evolves through unitary increments or decrements. The component $J(t)$ describes the environmental state at time $t$, or, 
more generally, the internal condition of the system within a given level. 
\par
In the general framework of QBD processes, the one-step transitions from a state $(k,j)$ are restricted to the states belonging either to the same level  $(k,j')$, 
or to one of the two adjacent levels, $(k+1,j')$ and $(k-1,j')$, $j,j'\in D:=\{1,\ldots,d\}$. In the present model, we consider level-dependent one-step transition probabilities and assume that, 
for $k\geq 1$ and $j\in D$, the transitions from the state $(k,j)$ are allowed only to the two adjacent levels within the same phase. However, from the state $(0,j)$ we can have 
one-step transitions to $(1,j'),\, j'\in D$, which are regulated by an irreducible stochastic matrix ${\bf C}=(c_{lj})$, with 
\begin{equation}
	c_{l,j}\geq 0,\qquad \sum_{j=1}^N c_{l,j}=1,\; \forall l \in D.
	\label{c_lj}
\end{equation}

Furthermore, catastrophic events are assumed to occur according to a Poisson process with rate $\xi>0$, i.e. the probability that a catastrophe occurs during a small time 
interval $(t, t+\Delta t)$ is given by $\xi \Delta t + o(\Delta t)$. 

Upon the occurrence of a catastrophe, the level of ${\bf X}_t$ is instantaneously reset to zero, while the phase remains unchanged.
Following such an event, the process resumes its usual evolution until the next catastrophic event takes place.

\begin{figure}[t] 
\begin{center}
\includegraphics[width=8cm]{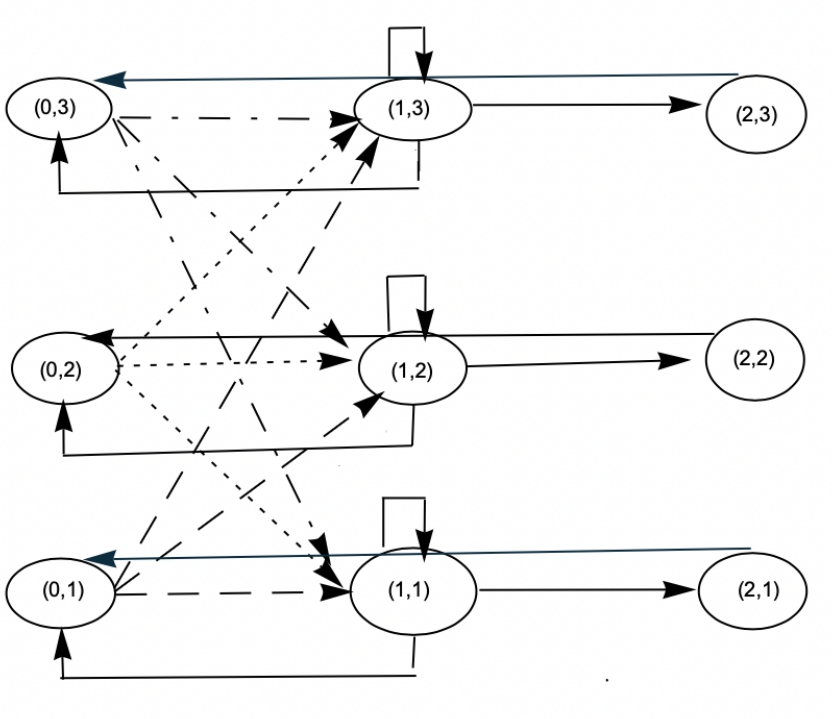}
\caption{State transition diagram in the case $d=3$ and $N=2$.}
\label{Diagram}
\end{center}
\end{figure}

The evolution of ${\bf X}_t$ is regulated by the following rules, where $h>0$ is small: 
\begin{description}
	\item{\em (i) \ } 
	if at time $t$ the process is in state ${\bf X}_t =(0,l)$, $l\in D$, then during the subsequent interval $(t,t+h]$ the process makes a transition to the state 
	$(1,j)$, $j\in D$, with probability $c_{l,j}\lambda N h+o(h)$, or the level remains unchanged with probability $1-\lambda N h+o(h)$, where $\lambda>0$;
	\item{\em (ii) \ } 
	if at time $t$ the process is in state ${\bf X}_t =(k,j)$, $k=1,2,\ldots,N-1$, $j\in D$, then
	during the subsequent interval $(t,t+h]$ one of the following transitions may occur: the process moves to the state $(k-1,j)$ with probability 
	$\mu (N+k) h+o(h)$, where $\mu>0$; it moves to the state $(k+1,j)$ with probability $\lambda (N-k)  h+o(h)$; or a catastrophic event occurs, causing a 
	transition to the state $(0,j)$ with probability $\xi h+o(h)$. Otherwise, the level remains unchanged with probability
	$1-[\mu (N+k)+\lambda (N-k)+\xi] h+o(h)$;
	\item{\em (iii) \ } 
	if at time $t$ the process is in state ${\bf X}_t =(N,j)$, $j\in D$, then 
	during the subsequent interval $(t,t+h]$ one of the following transitions may occur: the process moves to the state $(N-1,j)$ with probability $2N\mu h+o(h)$; 
	a catastrophic event occurs, resulting in a transition to the state $(0,j)$ with probability $\xi h+o(h)$; 
	or the level remains unchanged with probability $1-2N\mu h-\xi h+o(h)$, where $\mu>0$ and $\xi>0$. 
\end{description}
Due to assumptions {\em (i)}, {\em (ii)} and {\em (iii)}, the environment does not influence the birth and death process when the system is non empty, 
whereas the birth and death transition rates depend on the level of the system. 
Moreover, phase transitions are permitted only when the system is empty. Specifically, if at time $t$ the level of ${\bf X}$ is $0$ and the 
the environmental state is $J(t)=l$, $l\in D$, then the probability of a transition to the state $(1,j)$ is given by $\lambda N c_{l,j}$. 
The corresponding state-transition diagram is illustrated in Figure \ref{Diagram} in the particular case $d=3$, $N=2$. In such a picture, the levels are arranged 
as columns, while the rows correspond to the different environmental states.

In order to disclose the infinitesimal generator $Q$ of the system, let us define the transition rates of ${\bf X}_t$ as 
\begin{eqnarray*}
&& \hspace*{-1cm}
q(k,j; k',j')=\lim_{h\rightarrow 0^+} \frac{1}{h} 
{\mathbb P}[({\cal N}(t+h),J(t+h))=(k',j')\,|\,({\cal N}(t),J(t))=(k,j)],
\\
&& \hspace*{2cm}
\quad k, k'\in \{0,\ldots,N\}, j,j'\in D.
\end{eqnarray*}
%
According to the rules {\em (i)}, {\em (ii)} and {\em (iii)}, for $l,\,j\in D$, we have
%
$$
q(0,l;1,j)=c_{l,j} \lambda N,\qquad q(0,l;k,j)=0\quad \hbox{if $k\neq 1$}, 
$$
\begin{equation}
	q(k,j;k+1,j)=\lambda(N-k),\qquad k=1,2,\ldots,N-1, 
		\label{eq:tassi1}
\end{equation}
\begin{equation*}
	q(k,j;k-1,j)=\mu (N+k),\qquad k=1,2,\ldots,N,
\end{equation*}
\begin{equation*}
	q(1,j;0,j)=\mu(N+1)+\xi;
\end{equation*}
\begin{equation}
	q(k,j;0,j)=\xi,\qquad k=1,2,\ldots,N;
	\label{eq:tassi4}
\end{equation}
$$
q(k,j;r,j)=0\quad \hbox{if $|k-r|>1$}, \qquad q(k,i;r,j)=0\quad \hbox{if $i\neq j$}.
$$ 
Hence, denoting by ${\bf I}$ the identity matrix and by ${\bf 0}$ the zero matrix, the generator $Q$ of the process can 
be written as 
$$
Q=
\begin{pmatrix}
  -{\bf L}_0 & {\bf L}_0 \cdot {\bf C} & {\bf 0} & {\bf 0} & \cdots  & {\bf 0} & {\bf 0} \\
  {\bf M}_1+{\bf \Xi} & -{\bf M}_1-{\bf \Xi}-{\bf L}_1 & {\bf L}_1 & {\bf 0}  & \cdots  & {\bf 0} & {\bf 0} \\
  {\bf \Xi} & {\bf M}_2 &  -{\bf M}_2-{\bf \Xi}-{\bf L}_2 & {\bf L}_2 & \cdots  & {\bf 0} & {\bf 0} \\
  \vdots & \vdots       & \vdots				& \vdots         & \cdots  &               \vdots   & \vdots \\
 {\bf \Xi} & {\bf 0}    & {\bf 0}                    & \cdots  &             &  {\bf M}_N &  -{\bf M}_N-{\bf \Xi} \\
\end{pmatrix},
$$

where ${\bf C}$ has been defined in (\ref{c_lj}) and we have set 
$$
{\bf M}_i=\mu (N+i) {\bf I},\,\, i=1,\ldots, N, \qquad 
{\bf L}_i=\lambda (N-i) {\bf I},\,\,  i=0,\ldots, N,\qquad 
{\bf \Xi}=\xi {\bf I}.
$$
Note that $Q$ is a block matrix, with ${\bf M}_i$, ${\bf L}_i$ and ${\bf \Xi}$ diagonal matrices of order $d$, 
and, due to the presence of catastrophes, the first block column is completely occupied.

Let us set the initial state of the process as ${\bf X}_0=(0,l_0)$, with $l_0 \in D$, and define the state probabilities of ${\bf X}_t$, 
for $k=0,1,\ldots,N$, $j\in D$, as 
\begin{equation}
p(k,j, \cdot )={\mathbb P}\{({\cal N}(\cdot),J(\cdot))=(k, j)\,|\, ({\cal N}(0),J(0))=(0,l_0)\}. 
\label{eq:probpkjt}
\end{equation}
The initial conditions are thus expressed as 
\begin{equation}
	p(0,l,0)=\delta_{l,l_0},
	\label{probiniz1}
\end{equation}
where $\delta_{l,l_0}$ is the Kronecker delta, and for $k=1,\ldots, N$, 
\begin{equation}
	p(k,l,0)=0.
	\label{probiniz2}
\end{equation}
Due to Eqs. \!(\ref{eq:tassi1}) $\div$ (\ref{eq:tassi4}), for $k=1,\ldots,N, j\in D$ and $t>0$, the state probabilities $p(k,j, t)$
satisfy the following Chapman–Kolmogorov forward differential-difference equations
\begin{eqnarray}
	\label{eq:system}
	&&  \hspace{-0.8cm}
	{d \over d t}\;p(0,j, t)=\mu (N+1)\,p(1,j,t)+\xi \sum_{k=1}^Np(k,j,t)-\lambda N\,p(0,j,t), \nonumber
	\\
	&&  \hspace{-0.8cm}
	{d \over d t}\;p(1,j, t)=\mu (N+2)\,p(2,j,t)+\sum_{l \in D}c_{l,j} \lambda N p(0,l,t)-[\lambda (N-1)+\mu (N+1)+\xi]\,p(1,j,t),\nonumber
	\\
	&&  \hspace{-0.8cm}
	{d \over d t}\;p(k,j, t)=\mu (N+k+1)\,p(k+1,j,t)+\lambda (N-k+1) p(k-1,j,t)
	\\
	&& \hspace{1.5cm}-[\lambda (N-k)+\mu (N+k)+\xi]\,p(k,j,t),\qquad k \in \left\{2,\ldots,N-1\right\}\nonumber
	\\
	&&  \hspace{-0.8cm}
	{d \over d t}\;p(N,j, t)=\lambda\,p(N-1,j, t)-[\mu2N+\xi]\,p(N,j, t).
	\nonumber
\end{eqnarray}
Hence, denoting by 
\begin{equation}
p(k,\cdot):={\mathbb P} ({\cal N}(\cdot)=k)= \sum_{j\in D} p(k,j,\cdot),\quad k\in \{0,1,\ldots,N\},
\label{pkt}
\end{equation}
the marginal probability, i.e. the probability that at time $t$ the level of the system is $k$ whatever the phase, the following system of differential-difference equations holds:
\begin{eqnarray}
	&&  \hspace{-0.8cm}
	{d \over d t}\;p(0, t)=\mu (N+1)\,p(1,t)+\xi-[\lambda N+\xi]\,p(0,t),\nonumber
	\\
	&&  \hspace{-0.8cm}
	{d \over d t}\;p(1,t)=\mu (N+2)\,p(2,t)+\lambda N p(0,t)-[\lambda (N-1)+\mu (N+1)+\xi]\,p(1,t),\nonumber
	\\
	&&  \hspace{-0.8cm}
	{d \over d t}\;p(k, t)=\mu (N+k+1)\,p(k+1,t)+\lambda (N-k+1) p(k-1,t)
	\label{sysdiffeq}
	\\
	&& \hspace{1.5cm}-[\lambda (N-k)+\mu (N+k)+\xi]\,p(k,t),\qquad k \in \left\{2,\ldots,N-1\right\} \nonumber
	\\
	&&  \hspace{-0.8cm}
	{d \over d t}\;p(N, t)=\lambda\,p(N-1,t)-[\mu 2N+\xi]\,p(N, t).
	\nonumber
\end{eqnarray}

Note that when $\xi=0$, Eqs. (\ref{eq:system}) identify with Eqs. (6) of \cite{DiCrescenzo2022}. Moreover, the case $d=2$, by assuming that 
the $d$ lines are joined at the origin so that the states $(0,j)$ identify with a common extreme $0$, corresponds to the one-dimensional 
Ehrenfest process. 

\section{Transient regime}
%
This Section is devoted to the analysis of the process in the transient regime. In particular, by exploiting the relationship between 
${\cal N}(t)$ and the corresponding process in the absence of catastrophes, we derive several results concerning the marginal distribution 
of ${\cal N}(t)$.

For $z\in [0,1]$ and $t>0$, recalling Eqs. (\ref{eq:probpkjt}) and (\ref{pkt}), let us define the $j$-phase probability generating function 
\begin{equation}
G_j(z,t):=\sum_{k=0}^N z^k p(k,j,t),
\label{Gjpgf}
\end{equation}
and the probability generating function (pgf) of ${\cal N}(t)$ 
\begin{equation}
F(z,t):=E[z^{{\cal N}(t)}]=p(0,t)+\sum_{k=1}^N z^k p(k,t)=\sum_{j=1}^d G_j(z,t).
\label{FPgrande}
\end{equation}
\begin{proposition}
	\label{prop_F}
	The pgf (\ref{FPgrande}) satisfies the following partial differential equation, for $z\in [0,1]$ and $t>0$:
	\begin{equation}
		{\partial \over \partial t}\!F(z, t)=\xi-\mu \frac{N}{z}(1-z) p(0,t)+\left[\frac{N}{z}(1-z) (\mu-\lambda z)-\xi\right]F(z,t)
		+(1-z)(\mu+\lambda z) {\partial \over \partial z}\!F(z, t),
		\label{eq:diffF}
	\end{equation}
	with initial condition
	\begin{equation}
	F(z,0)=1,
	\label{initialconditions}
\end{equation}
and  boundary conditions
\begin{equation}
	F(1,t)=1, \qquad F(0,t)=p(0,t).
	\label{boundconditions}
\end{equation}
\end{proposition}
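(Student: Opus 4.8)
The plan is to derive the equation directly from the forward system (\ref{sysdiffeq}) for the marginal probabilities $p(k,t)$ by the standard generating-function device: multiply the $k$-th equation by $z^k$ and sum over $k=0,\ldots,N$. Because the sum is finite, the time derivative passes through, so the left-hand side becomes $\partial_t F(z,t)$, and the whole task reduces to rearranging the right-hand side into the four groups of terms displayed in (\ref{eq:diffF}).

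First I would dispose of the catastrophe contributions. The isolated constant $\xi$ in the equation for $p(0,t)$ contributes $\xi z^0=\xi$, while every equation, including the one for $k=0$, carries a term $-\xi p(k,t)$; summing these yields $-\xi\sum_{k=0}^N z^k p(k,t)=-\xi F(z,t)$. Together these account for the standalone $\xi$ and for the $-\xi F(z,t)$ concealed inside the bracketed coefficient of (\ref{eq:diffF}), so no further attention to catastrophes is needed.

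Next I would treat the birth–death part by index shifting. Collecting the in-flow death terms $\mu(N+k+1)p(k+1,t)$ (present for $k=0,\ldots,N-1$) and setting $m=k+1$ gives $\tfrac{\mu}{z}\sum_{m=1}^N(N+m)z^m p(m,t)$; completing the sum down to $m=0$ at the cost of $N p(0,t)$ and using $\sum_m (N+m)z^m p(m,t)=N F+z\,\partial_z F$ produces $\tfrac{\mu N}{z}F+\mu\,\partial_z F-\tfrac{\mu N}{z}p(0,t)$. Likewise the in-flow birth terms $\lambda(N-k+1)p(k-1,t)$ (present for $k=1,\ldots,N$), under $m=k-1$, become $\lambda z\sum_{m=0}^{N-1}(N-m)z^m p(m,t)$; since the coefficient $(N-m)$ vanishes at $m=N$ the sum extends freely to $N$, giving $\lambda N z\,F-\lambda z^2\,\partial_z F$. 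The diagonal rates require care: the uniform expression $-[\lambda(N-k)+\mu(N+k)]$ correctly reproduces the self-rates for $1\le k\le N$ (using $\mu(N+N)=2N\mu$ and $\lambda(N-N)=0$ at $k=N$), but it overstates the $k=0$ equation, where the reflecting boundary forbids any death transition. This single boundary mismatch at the origin is exactly where the $p(0,t)$ term is manufactured, and I expect this bookkeeping to be the only delicate point of the argument. Writing the diagonal sum as $-\big[(\lambda+\mu)N F+(\mu-\lambda)z\,\partial_z F\big]$ plus a correction $+\mu N p(0,t)$ restores the missing death rate at level zero.

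Finally I would collect the three groups and read off the coefficients. The $F$-terms combine to $N\big[\lambda z+\tfrac{\mu}{z}-\lambda-\mu\big]=\tfrac{N}{z}(1-z)(\mu-\lambda z)$; the $\partial_z F$-terms combine to $\mu-\lambda z^2-\mu z+\lambda z=(1-z)(\mu+\lambda z)$; and the $p(0,t)$-terms combine to $-\mu\tfrac{N}{z}(1-z)p(0,t)$, which, together with the catastrophe contribution $\xi-\xi F$, reproduces (\ref{eq:diffF}) verbatim. The auxiliary conditions are immediate: the initial condition $F(z,0)=1$ follows from (\ref{probiniz1})--(\ref{probiniz2}), since $p(0,0)=1$ and $p(k,0)=0$ for $k\ge 1$; normalization $\sum_{k=0}^N p(k,t)=1$ gives $F(1,t)=1$; and evaluating the defining series (\ref{FPgrande}) at $z=0$ leaves only the $k=0$ term, whence $F(0,t)=p(0,t)$.
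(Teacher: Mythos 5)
Your proof is correct and takes exactly the route the paper indicates: multiplying the marginal forward equations (\ref{sysdiffeq}) by $z^k$, summing, and regrouping via index shifts, with the boundary mismatch at level $0$ producing the $p(0,t)$ term; the paper merely asserts this "immediately follows," whereas you supply the full bookkeeping, and all your coefficient computations check out.
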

\begin{proof}
Eq. (\ref{eq:diffF}) immediately follows from  Eq. (\ref{sysdiffeq}) and recalling Eq. (\ref{FPgrande}). Moreover, conditions
(\ref{initialconditions}) and (\ref{boundconditions}) can be obtained by virtue of (\ref{probiniz1}) and (\ref{probiniz2}).
\end{proof}

\begin{proposition}
	For all $\lambda,\,\mu>0$, Eq.\ (\ref{eq:diffF}) with conditions (\ref{initialconditions}) and (\ref{boundconditions}),
	admits of the following solution for $z\in [0,1]$ and $t\geq 0$:
\begin{eqnarray}
	&&\hspace{-0.5cm}
	F(z,t)=\frac{e^{-\xi t} \left[\lambda(1-z)+(z \lambda+\mu)e^{t(\lambda+\mu)}\right]^N\left[\mu(z-1)+(z \lambda+\mu)e^{t(\lambda+\mu)}\right]^N}{z^N e^{2Nt(\lambda+\mu)}(\lambda+\mu)^{2N}}
	\nonumber\\
	&&\hspace{-0.5cm}
	+\frac{\xi}{z^N(\lambda+\mu)^{2N}} \int_0^t e^{-\xi(t-y)} \left[\lambda(1-z)e^{-(t-y)(\lambda+\mu)}+(z \lambda+\mu)\right]^N\left[\mu(z-1)e^{-(t-y)(\lambda+\mu)}+(z \lambda+\mu)\right]^N  {\rm d}y \nonumber\\
	&&\hspace{-0.5cm}
	+\frac{N \mu (z-1)}{z^N (\lambda+\mu)^{2N-1}}
	\\
	&&\hspace{-0.5cm}
	\times
	 \int_0^t p(0,y)e^{-(\lambda+\mu+\xi)(t-y)} \left[\lambda(1-z)e^{-(t-y)(\lambda+\mu)}+(z \lambda+\mu)\right]^N\left[\mu(z-1)e^{-(t-y)(\lambda+\mu)}+(z \lambda+\mu)\right]^{N-1} {\rm d}y   .\nonumber
     \label{F(z,t)}
\end{eqnarray}
\end{proposition}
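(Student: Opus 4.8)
The equation \eqref{eq:diffF} is a first-order linear (transport-type) PDE in $F$, so the natural plan is to integrate it by the method of characteristics combined with Duhamel's principle. First I would rewrite it as
\begin{equation*}
\partial_t F - (1-z)(\mu+\lambda z)\,\partial_z F = a(z,t)\,F + b(z,t),
\end{equation*}
with $a(z,t)=\tfrac{N}{z}(1-z)(\mu-\lambda z)-\xi$ and $b(z,t)=\xi-\mu\tfrac{N}{z}(1-z)\,p(0,t)$. Fixing a target point $(z,t)$, I introduce the characteristic $s\mapsto Z(s)$, $s\in[0,t]$, defined by $\dot Z=-(1-Z)(\mu+\lambda Z)$ with $Z(t)=z$; along it the function $g(s):=F(Z(s),s)$ solves the scalar linear ODE $g'(s)=a(Z(s),s)\,g(s)+b(Z(s),s)$.

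Next I would solve the characteristic equation, which is separable; a partial-fraction step yields the first integral $\frac{\mu+\lambda z}{1-z}\,e^{(\lambda+\mu)t}=\text{const}$. Inverting this relation gives the curve through $(z,t)$ explicitly,
\begin{equation*}
Z(s)=\frac{(\mu+\lambda z)\,e^{(\lambda+\mu)(t-s)}+\mu(z-1)}{(\mu+\lambda z)\,e^{(\lambda+\mu)(t-s)}+\lambda(1-z)},
\end{equation*}
whose numerator and denominator, after division by $e^{(\lambda+\mu)(t-s)}$, are exactly the two bracketed factors in the statement. The accompanying identities $1-Z(s)=(\lambda+\mu)(1-z)/D_s$ and $\mu+\lambda Z(s)=(\lambda+\mu)(\mu+\lambda z)e^{(\lambda+\mu)(t-s)}/D_s$ (with $D_s$ the denominator above) will be needed to rewrite $a$ and $b$ in bracket form.

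The core of the argument is the integrating factor $\exp\!\big(\int_s^t a(Z(r),r)\,dr\big)$. Its constant part $-\xi$ contributes $e^{-\xi(t-s)}$; for the remaining part I change variables from $r$ to $Z$ using $dr=-dZ/[(1-Z)(\mu+\lambda Z)]$, which turns the integrand into $-(\mu-\lambda Z)/[Z(\mu+\lambda Z)]$, and then apply $\frac{\mu-\lambda Z}{Z(\mu+\lambda Z)}=\frac1Z-\frac{2\lambda}{\mu+\lambda Z}$. Exponentiating $N$ times and inserting the explicit $Z(s)$ collapses the result into the product of the $N$-th powers of the two brackets divided by $z^N(\lambda+\mu)^{2N}e^{2N(\lambda+\mu)(t-s)}$. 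I expect this algebraic collapse to be the main obstacle: it is where the exponents $N$ and $N-1$ and the prefactors $(\lambda+\mu)^{2N}$, $(\lambda+\mu)^{2N-1}$ are generated, and it must be handled with care.

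Finally I would assemble the Duhamel representation $F(z,t)=g(0)\,e^{\int_0^t a\,dr}+\int_0^t b(Z(s),s)\,e^{\int_s^t a\,dr}\,ds$. The initial condition \eqref{initialconditions} gives $g(0)=F(Z(0),0)=1$, producing the first term; the source $\xi$ produces the second; and the source $-\mu\tfrac{N}{z}(1-z)p(0,t)$, once $\frac{1-Z(s)}{Z(s)}=\frac{(\lambda+\mu)(1-z)}{(\mu+\lambda z)e^{(\lambda+\mu)(t-s)}+\mu(z-1)}$ is used, lowers one power of the first bracket and produces the third, with exponent $N-1$. To close, I would verify $F(1,t)=1$ directly from the formula (the first two terms combine to $e^{-\xi t}+(1-e^{-\xi t})$, and the third vanishes through its factor $z-1$); the relation $F(0,t)=p(0,t)$ then holds as a consistency identity, since every characteristic traced back from $z\in[0,1]$ lands at an interior point at $t=0$, so the initial datum alone determines $F$ and the boundary relations impose no additional constraint. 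A fully mechanical alternative is to substitute the candidate $F$ into \eqref{eq:diffF} and differentiate, but that route replaces the clean characteristic bookkeeping with much heavier computation.
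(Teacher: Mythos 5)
Your proposal is correct and follows exactly the route the paper takes: the paper's proof is the single line ``by adopting the method of characteristics,'' and your characteristic curve $Z(s)$, the integrating factor computed via the partial fraction $\tfrac{\mu-\lambda Z}{Z(\mu+\lambda Z)}=\tfrac1Z-\tfrac{2\lambda}{\mu+\lambda Z}$, and the Duhamel assembly of the three terms (initial datum, the source $\xi$, and the source proportional to $p(0,t)$ with the lowered exponent $N-1$) reproduce the stated formula. The only minor imprecision is the remark on $F(0,t)=p(0,t)$: because of the $z^{-N}$ prefactors this is not automatic but is precisely the regularity requirement the paper later uses (Appendix A) to determine $p(0,t)$; this does not affect the validity of the solution formula itself, which treats $p(0,\cdot)$ as given.
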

\begin{proof}
The proof immediately follows from Proposition \ref{prop_F} by adopting the method of characteristics.
\end{proof}
Let ${\widetilde {\cal N}}(t)$ denote the corresponding process without catastrophes, as investigated in \cite{DiCrescenzo2022}. 
A well-established result provides a relationship between the probabilities and conditional moments of ${\widetilde  {\cal N}}_t$ and  $ {\cal N}_t$ 
(see, for instance, \cite{DiCrescenzo2008}). Specifically, if we denote by ${\widetilde p}(k,t)$ the state probabilities of the process ${\widetilde{\cal N}}_t$, and recalling Eq. (\ref{pkt}), we obtain
\begin{equation}
  p(k,t)=e^{-\xi t}{\widetilde p}(k,t)+\xi \int_0^t e^{-\xi \tau}{\widetilde p}(k,\tau)d\tau,\;t \geq 0,\;k \in \{1,\ldots,N\}.
  \label{relazione1}
\end{equation}
The first term in the right-hand-side of Eq. (\ref{relazione1}) corresponds to the probability that at time $t$ the process occupies 
the level $k$ (in any phase) without the occurrence of a catastrophe during the interval $(0,t)$. The second
term, instead, accounts for the case in which the first catastrophe occurs at some time $\tau\leq t$. 

Similarly, denoting by ${\widetilde M}^r(t):=E\left[{\widetilde {\cal N}}_t^r\right]$ the $r$-th moment of ${\widetilde {\cal N}}(t)$, $r\geq 1$, the following relationship holds:
\begin{equation}
M^r(t):=E\left[{ {\cal N}}_t^r\right]=e^{-\xi t}{\widetilde M}^r(t)+\xi \int_0^t e^{-\xi \tau}{\widetilde M}^r(\tau)d\tau,\;t \geq 0.
  \label{rel2}
\end{equation}
In the following subsection we make use of the above equations to disclose the explicit expressions both of the marginal probabilities $p(k,t)$ and of the moments of ${\cal N}_t$, 
in the special case $\lambda=\mu$. 

\subsection{The special case: $\lambda=\mu$}

Let us assume $\lambda=\mu>0$ and set
\begin{equation}
 R(\alpha_k)=\frac {\prod_{r=0}^{N-1}\left[\alpha_k+2\mu (2r+1)\right]} {2 \prod_{\stackrel{s=1}{s\neq k}}^{N+1}  (\alpha_k-\alpha_s),},
\qquad k=1,2,\ldots, N+1,
\label{Ralfak}
\end{equation}
with $0=\alpha_1>\alpha_2>\ldots >\alpha_{N+1}$ denoting the roots of the polynomial 
\begin{equation}
\label{pol_P}
P(x)=x \left[\prod_{r=0}^{N-1} \left(x+2 \mu (2r+1)\right)+\prod_{r=0}^{N-1} \left(x+2 \mu (2r+2)\right)\right].
\end{equation}
Note that the proof that the polynomial $P(x)$ admits one root equal to $0$ and $N$ different negative roots can be found in \cite{DiCrescenzo2022}.
%
%
%
\begin{proposition}
If $\lambda=\mu$, for all $t>0$, recalling Eq. (\ref{Ralfak}), we have  
%
\begin{equation}
  p(0,t)=\frac{2 {2N \choose N}}{{2N \choose N}+4^N}+ 2 \sum_{k=2}^{N+1} R(\alpha_k)\left[e^{-(\xi-\alpha_k) t}+\frac{\xi}{\xi-\alpha_k}(1-e^{-(\xi-\alpha_k) t})\right].
  \label{rel1}
\end{equation}
%
Moreover, for $r=1,2,\ldots,N$, it results

\begin{eqnarray*}
&& \hspace*{-1.2cm}
p(r,t)=\frac{1}{4^N}{2N \choose N+r}\sum_{l=0}^N {N \choose l} (-1)^l \theta_{1}\big(2l,0,0\big) \big[1-\theta_2\big(t,4 l \mu \big)\big]
\!+\! \frac{\mu N}{2^{2N-2}}(-1)^{N-1} 
\nonumber \\
&& \hspace*{-0.2cm}
\times 
\sum_{j=0}^{N-1}{N-1 \choose j}(-1)^{N-1-j}
 \left\{{2N-1 \choose N+r} \theta_{1}\big(2j,1,1\big) -{2N-1 \choose N+r-1}\theta_{1}\big(2j,0,1\big)\right\}
\nonumber\\
&& \hspace*{-0.2cm}
\times \sum_{k=1}^{N+1}\frac{R(\alpha_k)}{\left|\alpha_k\right|-2\mu(2N-2j-1)}\left\{\theta_2\big(t,2 \mu [2 N-2 j-1]\big)-\theta_2\big(t,|\alpha_k|\big)\right\}
\nonumber\\
&& \hspace*{-0.2cm}
+\frac{\mu N}{2^{2N-2}}(-1)^{N-r}{2N \choose N+r}\sum_{j=0}^{N-1}{N-1 \choose j}(-1)^{N-1-j}\theta_{1}\big(2j,0,0\big) 
\nonumber\\
&& \hspace*{-0.2cm}
\times \sum_{k=1}^{N+1}\frac{R(\alpha_k)}{|\alpha_k|-4\mu(N-j)}
\left\{\theta_2\big(t,4 \mu (N- j)\big)-\theta_2\big(t,|\alpha_k|\big)\right\},
\end{eqnarray*}
where we have set $\theta_1(a,b,c):={}_{2}F_{1}\left(-a,-N+r+b,-2N+c,2\right)$ and 
$\theta_2(t,y)\!:=\!(1-{\rm e}^{-t(\xi+y)})\left(1-\frac{\xi}{\xi+y}\right).$
\label{proppktransiente}
\end{proposition}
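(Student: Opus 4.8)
The plan is to reduce everything to the catastrophe-free process $\widetilde{\cal N}(t)$ by means of the representation (\ref{relazione1}) together with the explicit transient probabilities $\widetilde{p}(k,t)$ obtained in \cite{DiCrescenzo2022} for the case $\lambda=\mu$. The key observation is that the right-hand side of (\ref{relazione1}) defines a \emph{linear} operator
\[
\mathcal{T}[f](t):=e^{-\xi t}f(t)+\xi\int_0^t e^{-\xi\tau}f(\tau)\,d\tau ,
\]
so it suffices to evaluate its action on the elementary exponentials out of which $\widetilde{p}(k,t)$ is built. A direct integration gives, for every $y\geq 0$,
\[
\mathcal{T}[e^{-yt}]=e^{-(\xi+y)t}+\frac{\xi}{\xi+y}\bigl(1-e^{-(\xi+y)t}\bigr)=1-\theta_2(t,y),
\]
with $\theta_2$ exactly as in the statement and $\mathcal{T}[1]=1$ corresponding to $y=0$. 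Hence, once $\widetilde{p}(k,t)$ is written as a finite combination of terms $e^{-yt}$ and of convolution differences of such terms, the expression for $p(k,t)$ follows by replacing each $e^{-yt}$ with $1-\theta_2(t,y)$.

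For $p(0,t)$ I would start from the spectral representation $\widetilde{p}(0,t)=2\sum_{k=1}^{N+1}R(\alpha_k)\,e^{\alpha_k t}$ of \cite{DiCrescenzo2022}, where the $\alpha_k$ are the roots of $P(x)$ in (\ref{pol_P}) and $R(\alpha_k)$ is given by (\ref{Ralfak}). Applying $\mathcal{T}$ and using $\alpha_k=-|\alpha_k|$, each exponential becomes $1-\theta_2(t,|\alpha_k|)=e^{-(\xi-\alpha_k)t}+\frac{\xi}{\xi-\alpha_k}\bigl(1-e^{-(\xi-\alpha_k)t}\bigr)$, which is precisely the bracket in (\ref{rel1}). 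For the root $\alpha_1=0$ this bracket collapses to $1$ and, with the evaluation $R(\alpha_1)=\binom{2N}{N}/\bigl[\binom{2N}{N}+4^N\bigr]$, produces the constant stationary term, while the roots $k=2,\ldots,N+1$ give the displayed sum. The use of (\ref{relazione1}) at $k=0$ is legitimate because, by rules (i)--(iii), the marginal level process is a phase-independent birth--death process reset to $0$ at each catastrophe, so the renewal argument leading to (\ref{relazione1}) applies verbatim; alternatively one may invoke $p(0,t)=1-\sum_{r=1}^N p(r,t)$.

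For $p(r,t)$, $r\in\{1,\ldots,N\}$, the same mechanism applies through (\ref{relazione1}), but the input $\widetilde{p}(r,t)$ of \cite{DiCrescenzo2022} is considerably richer. Setting $\lambda=\mu$ and extracting the coefficient of $z^r$ from the catastrophe-free characteristics solution (the pgf established above with $\xi=0$), $\widetilde{p}(r,t)$ splits into a purely stationary piece --- carrying the hypergeometric coefficients $\theta_1(2l,0,0)$ and the binomial $\binom{2N}{N+r}/4^N$ --- and two families of convolution terms $\frac{e^{-|\alpha_k|t}-e^{-ct}}{|\alpha_k|-c}$ with rates $c=2\mu(2N-2j-1)$ and $c=4\mu(N-j)$, weighted by $R(\alpha_k)$ and by the hypergeometric factors $\theta_1(2j,\cdot,\cdot)$. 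Applying $\mathcal{T}$ term by term and using $\mathcal{T}[e^{-yt}]=1-\theta_2(t,y)$ converts each convolution difference into $\frac{\theta_2(t,c)-\theta_2(t,|\alpha_k|)}{|\alpha_k|-c}$, reproducing the double sums over $j$ and $k$ in the statement.

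The routine but delicate part is verifying that $\widetilde{p}(r,t)$ of \cite{DiCrescenzo2022}, after setting $\lambda=\mu$, organizes exactly into the stationary piece plus the two convolution families indexed by $2\mu(2N-2j-1)$ and $4\mu(N-j)$, with the correct binomial and hypergeometric prefactors and the correct signs in the difference quotients. I expect this matching of prefactors --- rather than any conceptual step --- to be the main obstacle, since the action of $\mathcal{T}$ is otherwise entirely mechanical.
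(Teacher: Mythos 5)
Your proposal is correct and follows essentially the same route as the paper: the authors' own (one-line) proof likewise applies the catastrophe/no-catastrophe relation (\ref{relazione1}) to the explicit transient probabilities $\widetilde p(0,t)$ and $\widetilde p(r,t)$ of Propositions 4 and 5 of \cite{DiCrescenzo2022}, which is exactly your operator $\mathcal{T}$ acting termwise on the spectral exponentials via $\mathcal{T}[e^{-yt}]=1-\theta_2(t,y)$. Your additional remark justifying the validity of (\ref{relazione1}) at $k=0$ (stated in the paper only for $k\geq 1$) is a welcome clarification rather than a deviation.
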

\begin{proof}
The results follows from Eq.  (\ref{rel1}) and  recalling Propositions $4$ and $5$ of \cite{DiCrescenzo2022}.
\end{proof}

\begin{figure}[t]
\centering
\includegraphics[width=6cm]{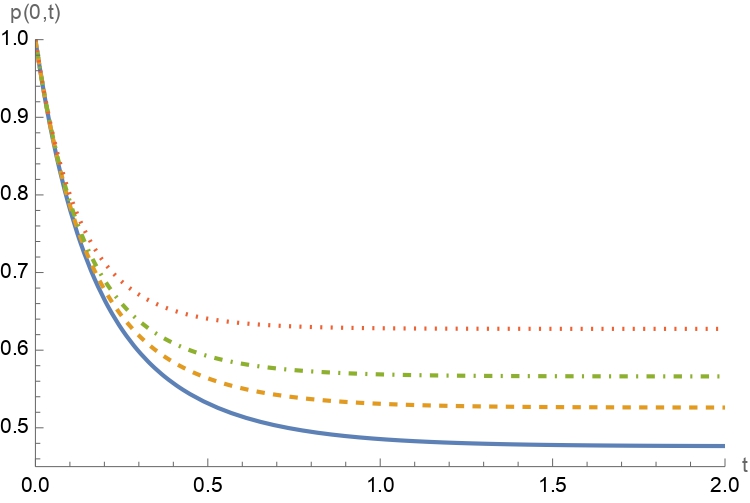}	
\hspace*{0.4cm}
\includegraphics[width=6cm]{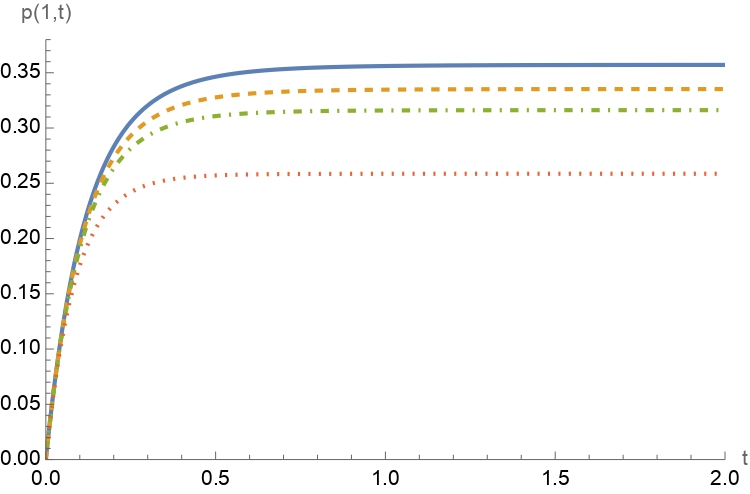}	
\includegraphics[width=6cm]{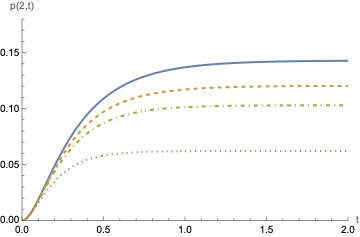}	
\hspace*{0.4cm}
\includegraphics[width=6cm]{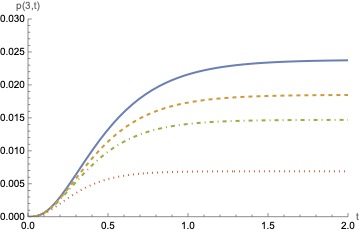}	
\caption{The marginal probabilities $p(k,t)$ as function of $t$ for $N=3$, $\lambda=\mu=1$ and $\xi=0$ (full line), $\xi=0.5$ (dashed line), $\xi=1$ (dot-dashed line), $\xi=2$ (dotted line).}
\label{Figure_trans_N_3_prob}
\end{figure}

Some plots of the probabilities $p(k,t)$ as function of $t$ for different values of $\xi$ are provided in Figure \ref{Figure_trans_N_3_prob}. 
The probability $p(0,t)$ is decreasing with respect to $t$ and increasing as $\xi$ grows (indeed, the more frequently catastrophes occur, the greater the likelihood that the system contains zero elements). 
Conversely, for $k=1,2,3$, the probabilities $p(k,t)$ are increasing in $t$ and decreasing as $\xi$ increases.

%

%
%
%
%
\begin{proposition}
For $\lambda=\mu>0$, the first and the second moment of the process ${\cal N}_t$ are given by
\begin{eqnarray}
&& \hspace*{-1cm}
M(t)=2 \mu N \xi \sum_{k=1}^{N+1}  \frac{R(\alpha_k)}{|\alpha_k|-2 \mu }\left[\frac{1}{2\mu+\xi}-\frac{1}{\xi+|\alpha_k|}\right]\nonumber\\
&&\hspace{1cm}+2 \mu N \sum_{k=1}^{N+1}  \frac{R(\alpha_k)}{|\alpha_k|-2 \mu }\left[\frac{2\mu}{2\mu+\xi}e^{-t(\xi+2\mu)}-\frac{|\alpha_k|}{|\alpha_k|+\xi}e^{-t(\xi+|\alpha_k|)}\right],
\nonumber\\
&& \hspace*{-1cm}
M^2(t)=
\frac{2 N \mu}{4 \mu+\xi}\left(1-e^{-(\xi+4\mu) t}\right)
-2 N \mu  \left[\sum_{k=1}^{N+1}\frac{\xi\,R(\alpha_k)}{(\xi+4 \mu)(|\alpha_k|+\xi)}\right.
\nonumber\\
&& \left.\hspace*{-0cm}+ \sum_{k=1}^{N+1}\frac{R(\alpha_k)}{(|\alpha_k|-4 \mu)}\left( \frac{4\mu }{4 \mu+\xi}e^{-t(\xi+4 \mu)}-\frac{|\alpha_k|}{|\alpha_k|+\xi}e^{-t(\xi+|\alpha_k|)}\right)\right]\nonumber,
\end{eqnarray}
where $R(\alpha_k)$ and $\alpha_k$ have been defined in Eqs.  (\ref{Ralfak}) and (\ref{pol_P}), respectively.
\end{proposition}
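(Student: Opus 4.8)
The plan is to obtain both moments not by solving a fresh system of differential equations, but by transferring the already known moments of the catastrophe-free process $\widetilde{\mathcal{N}}_t$ through the representation (\ref{rel2}). The starting point is the explicit form, valid for $\lambda=\mu$, of the conditional moments $\widetilde{M}^r(t)=E[\widetilde{\mathcal{N}}_t^{\,r}]$ supplied by Propositions 4 and 5 of \cite{DiCrescenzo2022}. In that setting both $\widetilde{M}(t)$ and $\widetilde{M}^2(t)$ are finite linear combinations of decaying exponentials $e^{-\beta t}$, whose rates $\beta$ range over $\{2\mu,4\mu\}\cup\{|\alpha_k|:k=1,\ldots,N+1\}$ and whose coefficients are built from the residues $R(\alpha_k)$ of (\ref{Ralfak}). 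Concretely, the first moment reads
\begin{equation*}
\widetilde{M}(t)=2\mu N\sum_{k=1}^{N+1}\frac{R(\alpha_k)}{|\alpha_k|-2\mu}\left(e^{-2\mu t}-e^{-|\alpha_k|t}\right),
\end{equation*}
and the analogous representation for $\widetilde{M}^2(t)$ carries the rate $4\mu$ in place of $2\mu$, together with a genuine constant mode originating from the root $\alpha_1=0$.

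The second step is purely computational: I substitute these exponential expansions into (\ref{rel2}) and evaluate the elementary convolution integrals mode by mode. For a single rate $\beta\geq 0$ the calculation reduces to the identity
\begin{equation*}
e^{-\xi t}e^{-\beta t}+\xi\int_0^t e^{-(\xi+\beta)\tau}\,d\tau=\frac{\xi}{\xi+\beta}+\frac{\beta}{\xi+\beta}\,e^{-(\xi+\beta)t},
\end{equation*}
so that every catastrophe-free exponential $e^{-\beta t}$ is mapped to a stationary contribution $\xi/(\xi+\beta)$ plus a transient term decaying at the shifted rate $\xi+\beta$. Applying this to $\widetilde{M}(t)$ with the two rates $2\mu$ and $|\alpha_k|$, and then grouping the stationary and transient parts separately, reproduces precisely the two displayed sums in $M(t)$: the first bracket collects the constants $\xi[(2\mu+\xi)^{-1}-(|\alpha_k|+\xi)^{-1}]$, while the second collects the exponentials weighted by $2\mu/(2\mu+\xi)$ and $|\alpha_k|/(|\alpha_k|+\xi)$.

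For the second moment the same procedure applies verbatim with $2\mu$ replaced by $4\mu$. The constant mode of $\widetilde{M}^2(t)$ (the $\beta=0$ contribution coming from $\alpha_1=0$), when combined with its companion $e^{-4\mu t}$ mode, yields the leading factor $\tfrac{2N\mu}{4\mu+\xi}\big(1-e^{-(\xi+4\mu)t}\big)$, and the remaining $R(\alpha_k)$-weighted modes reproduce the two sums in the bracket.

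The step I expect to require the most care is the algebraic reorganisation rather than the integration. One must check that, after the transformation, the recombined coefficients collapse to the compact denominators $|\alpha_k|-2\mu$ and $|\alpha_k|-4\mu$ of the statement, and in particular that none of these denominators vanishes. This last point follows from the location of the roots of $P(x)$ in (\ref{pol_P}): since the factor $x$ places one root at $\alpha_1=0$ while the bracketed factor has its $N$ negative roots strictly interlaced between the nodes $-2\mu(2r+1)$ and $-2\mu(2r+2)$, one verifies that $P(-2\mu)\neq 0$ and $P(-4\mu)\neq 0$, so that $|\alpha_k|\neq 2\mu$ and $|\alpha_k|\neq 4\mu$ for every $k$. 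Hence all the apparent singularities are absent and the stated expressions are well defined.
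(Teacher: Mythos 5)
Your proposal is correct and takes essentially the same route as the paper: both start from the exponential-mode expressions of $\widetilde{M}(t)$ and $\widetilde{M}^2(t)$ for the catastrophe-free process taken from \cite{DiCrescenzo2022} and then apply the transfer relation (\ref{rel2}), which the paper dismisses as ``straightforward calculations'' and you carry out explicitly mode by mode. Your added checks (the elementary convolution identity, the cancellation of the factors $|\alpha_k|-4\mu$ in the stationary part, and the non-vanishing of the denominators via $P(-2\mu)\neq 0$, $P(-4\mu)\neq 0$) are consistent with the stated formulas and fill in exactly the details the paper omits.
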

\begin{proof}
Starting from the expression of the probability generating function in \cite{DiCrescenzo2022}, we can easily obtain 
the expected value $\widetilde{M}(t)$ and the second-order moment $\widetilde{M}^2(t)$  of the process without catastrophes ${\widetilde {\cal N}_t}$
\begin{eqnarray}
&&\widetilde{M}(t)=2 \mu N \sum_{k=1}^{N+1}  \frac{R(\alpha_k)}{|\alpha_k|-2 \mu }
({\rm e}^{-2 \mu  t}-e^{-|\alpha_k|\, t} ),\nonumber\\
&&\widetilde{M}^2(t)=\frac{N}{2}\left(1-e^{-4\mu t}\right)-2 \mu N \sum_{k=1}^{N+1}  \frac{R(\alpha_k)}{|\alpha_k|-4 \mu }
({\rm e}^{-4 \mu  t}-e^{-|\alpha_k|\, t} ).\nonumber
\end{eqnarray}
Then, the result follows from Eq. (\ref{rel2}) after straightforward calculations.
\end{proof}
%
\begin{figure}[t]
\centering
\includegraphics[width=6cm]{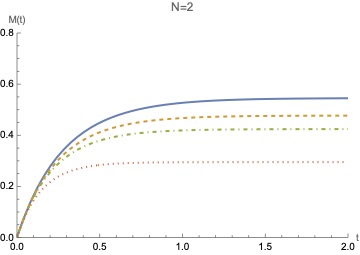}
\hspace*{0.4cm}
\includegraphics[width=6cm]{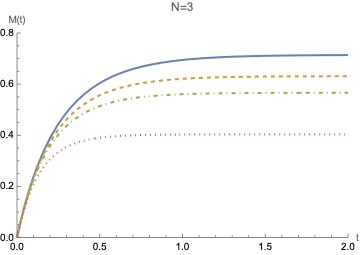}	
\caption{The expected value $M(t)$ as function of $t$ for $N=2$ on the left and $N=3$ on the right, $\lambda=\mu=1$ and $\xi=0$ (full line), $\xi=0.5$ (dashed line), $\xi=1$ (dot-dashed line), $\xi=2$ (dotted line).}
\label{trans_N_2_media}
\end{figure}
\begin{figure}[t]
\centering
\includegraphics[width=6cm]{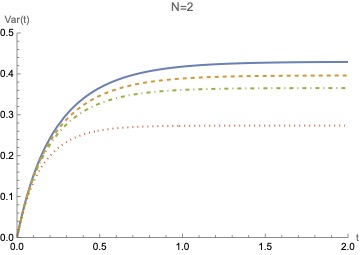}
\hspace*{0.4cm}
\includegraphics[width=6cm]{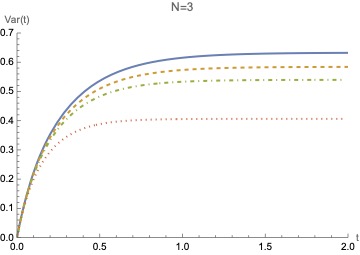}	
\caption{The variance $Var(t)=M^2(t)-[M(t)]^2$ is plotted as a function of $t$ for $N=2$ on the left and $N=3$ on the right, $\lambda=\mu=1$ and $\xi=0$ (full line), $\xi=0.5$ (dashed line), $\xi=1$ (dot-dashed line), $\xi=2$ (dotted line).}
\label{trans_N_2_var}
\end{figure}

Figures \ref{trans_N_2_media} and \ref{trans_N_2_var} show the expected value and the variance of ${\cal N}(t)$ for some choices of $N$ and different values of $\xi$. 
Both functions exhibit an increasing behaviour with respect to time  $t$, while they decrease as the parameter $\xi$ increases.
\section{Asymptotic regime}
This section is devoted to the analysis of the process ${\bf X}_t$ in its asymptotic regime.
In particular, we establish several results concerning both the state probabilities and the marginal distribution of the process.

Let us set, for $ j=1,\ldots,d$, 

\begin{equation}
  G_j(z):=\lim_{t \rightarrow +\infty}G_j(z,t),
  \label{G_j}
\end{equation}
and
\begin{equation*}
	F(z):=\lim_{t\to +\infty} F(z,t),
\end{equation*}
with $G_j(z,t)$ and $F(z,t)$ defined in Eqs. (\ref{Gjpgf}) and (\ref{FPgrande}), respectively.

Moreover, denoting by ${\cal N}$ and ${\cal J}$ the random variables describing the stationary state of ${\cal N}(t)$ and ${\cal J}(t)$ respectively, we set, 
for $k=0,1,\ldots,N$ and $j=1.\ldots,d$,

\begin{equation}
\rho(k,j):=\lim_{t \rightarrow +\infty} p(k,j,t)=P({\cal N}=k, {\cal J}=j),
    \label{asympt_prob_bid}
\end{equation}
and 
\begin{equation}
	\rho(k):=\lim_{t\rightarrow +\infty} p(k,t)=P({\cal N}=k), \qquad 
	\label{rhok}
\end{equation}
with 
\begin{equation}
    \rho(k)=\sum_{j=1}^d \rho(k,j).
    \label{rel_asympt_prob_bid}
\end{equation}

Note that the following balance equation between the incoming and outgoing probability currents through level $0$ holds:

\begin{equation}
\sum_{l \neq j} c_{j,l} \rho(0,j)= \sum_{l \neq j}c_{l,j}  \rho(0,l),
\label{balance}
\end{equation}
with $c_{i,j}$ defined in Eq. (\ref{c_lj}). 
In such equation, the left-hand-side represents the intensity of transitions leaving the phase $j$ at level $0$ 
moving toward the level $1$ of any different phase, whereas the right-hand-side represents the intensity of transitions entering 
the phase $j$ of the level $1$, starting from any different phase of the level $0$.

The following Proposition provides the explicit expression of $F(z)$.
\begin{proposition}\label{asymptotic}
For $	\lambda, \mu, \xi>0$, $d, N\in {\mathbb N}^+$, the asymptotic probability generating function $F(z)$ is given by
\begin{eqnarray}
	&&\hspace{-0.6cm}
	F(z)=-\frac{N \mu (1-z)(\lambda z+\mu)^N}{(\lambda+\mu)^{2N}z^N} g(\lambda,\mu,\xi,N)\sum_{j=0}^{N-1}{N-1 \choose j} [(\lambda+\mu)z]^{N-j-1}[\mu(1-z)]^j 
	\nonumber\\ 
	&&\hspace{-0.6cm}
	\times \frac{\Gamma\left(j+1\right)\Gamma\left(\frac{\xi}{\lambda+\mu}+1\right)}{\Gamma\left(j+\frac{\xi}{\lambda+\mu}+2\right)}{}_{2}F_{1}\left(\frac{\xi}{\lambda+\mu}+1,-N,\frac{\xi}{\lambda+\mu}+j+2;-\frac{\lambda(1-z)}{\lambda z +\mu}\right) 
	+\frac{ (\lambda z+\mu)^N}{(\lambda+\mu)^{2N}z^N}  \sum_{j=0}^{N}{N \choose j} 
	\nonumber\\ 
	&&\hspace{-0.6cm}
		\times [(\lambda+\mu)z]^{N-j}[\mu(1-z)]^j \frac{\Gamma\left(j+1\right)\Gamma\left(\frac{\xi}{\lambda+\mu}+1\right)}{\Gamma\left(j+\frac{\xi}{\lambda+\mu}+1\right)}
	{}_{2}F_{1}\left(\frac{\xi}{\lambda+\mu},-N,\frac{\xi}{\lambda+\mu}+j+1;-\frac{\lambda(1-z)}{\lambda z +\mu}\right), 
	\nonumber\\
	&&\hspace{-1cm}
	\label{F_asympt}
\end{eqnarray}					
	where
	\begin{equation}
		g(\lambda,\mu,\xi,N):=\frac{{}_{2}F_{1}\left(\frac{\xi}{\lambda+\mu},-N,\frac{\xi}{\lambda+\mu}+N+1;-\frac{\lambda}{\mu}\right)}{{}_{2}F_{1}\left(\frac{\xi}{\lambda+\mu}+1,-N,\frac{\xi}{\lambda+\mu}+N+1;-\frac{\lambda}{\mu}\right)},
		\label{g_1}
	\end{equation}
	with ${}_{2}F_{1}\left(a,b,c;z\right)$ denoting the Gauss hypergeometric function.
\end{proposition}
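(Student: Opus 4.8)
The plan is to pass to the stationary limit directly in the evolution equation (\ref{eq:diffF}) and then integrate the resulting ordinary differential equation. Since $F(z)=\lim_{t\to\infty}F(z,t)$ and, by the boundary condition in (\ref{boundconditions}), $F(0)=\lim_{t\to\infty}p(0,t)=\rho(0)$, letting $t\to\infty$ makes the left-hand side of (\ref{eq:diffF}) vanish and yields the first-order linear ODE
\begin{equation*}
(1-z)(\mu+\lambda z)\,F'(z)+\left[\frac{N}{z}(1-z)(\mu-\lambda z)-\xi\right]F(z)=\frac{N\mu}{z}(1-z)\,\rho(0)-\xi,
\end{equation*}
in which $\rho(0)$ appears as an a priori unknown constant. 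First I would solve this ODE by the integrating-factor method: dividing through by $(1-z)(\mu+\lambda z)$ and decomposing the coefficient of $F(z)$ into partial fractions, the integrating factor is obtained in closed form as $\Phi(z)=z^{N}(\mu+\lambda z)^{-2N}\bigl(\tfrac{1-z}{\mu+\lambda z}\bigr)^{\xi/(\lambda+\mu)}$, so that $(\Phi F)'=\Phi\,Q$, with $Q(z)=\frac{N\mu\,\rho(0)}{z(\mu+\lambda z)}-\frac{\xi}{(1-z)(\mu+\lambda z)}$.

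Next I would integrate and enforce regularity. The general solution reads $F(z)=\Phi(z)^{-1}\bigl[K+\int\Phi(w)Q(w)\,dw\bigr]$; since $F$ is a polynomial in $z$ (the stationary ${\cal N}$ takes finitely many values $0,\ldots,N$), it must be analytic at $z=1$, which cancels the branch factor $(1-z)^{-\xi/(\lambda+\mu)}$ carried by $\Phi(z)^{-1}$ and is achieved by integrating from $z=1$ with $K=0$, thereby fixing the free constant. The two integrals stemming from the $\rho(0)$-term and the $\xi$-term of $Q$ are of the type $\int_{1}^{z}w^{a}(1-w)^{b}(\mu+\lambda w)^{-c}\,dw$ with $a\in\{N-1,N\}$. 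I would reduce each to an Euler integral through the substitution $w=1-(1-z)t$, expand the resulting integer powers by means of the identity $(\mu+\lambda z)=(\lambda+\mu)z+\mu(1-z)$ (which generates the binomial weights $[(\lambda+\mu)z]^{\cdots}[\mu(1-z)]^{j}$), and recognize the one-dimensional integrals as Gauss hypergeometric functions; a Pfaff transformation finally turns the argument into $-\lambda(1-z)/(\lambda z+\mu)$. This reproduces the two sums of (\ref{F_asympt}), the first carrying the unknown $\rho(0)$ and the second being explicit.

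It then remains to determine $\rho(0)$, equivalently the constant $g(\lambda,\mu,\xi,N)$ that multiplies the first sum. Because $F$ is a polynomial, the apparent pole $z^{-N}$ in (\ref{F_asympt}) has to be spurious, so the coefficient of $z^{-N}$ must vanish as $z\to 0$. In each sum only the top index contributes to this coefficient ($j=N-1$ in the first, $j=N$ in the second), the hypergeometric argument reduces to $-\lambda/\mu$, and after simplifying the Gamma quotients via $\Gamma(N+1)=N\,\Gamma(N)$ the cancellation condition collapses to $g\cdot {}_2F_1(\tfrac{\xi}{\lambda+\mu}+1,-N,\tfrac{\xi}{\lambda+\mu}+N+1;-\tfrac{\lambda}{\mu})={}_2F_1(\tfrac{\xi}{\lambda+\mu},-N,\tfrac{\xi}{\lambda+\mu}+N+1;-\tfrac{\lambda}{\mu})$, i.e. exactly (\ref{g_1}). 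As a consistency check, the normalization $F(1)=1$ should hold automatically: at $z=1$ the first sum vanishes through its factor $(1-z)$, while in the second only the $j=0$ term survives, for which the Gamma ratio is unity and ${}_2F_1(\cdots;0)=1$.

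I expect the main obstacle to lie in the second step. Carrying the two integrals to closed form and matching them to the precise normalization of (\ref{F_asympt}) — the correct $\Gamma$-quotients, the binomial weights, the index shifts and the Pfaff transformation of the argument — demands careful bookkeeping; moreover $\rho(0)$ enters the source term $Q$ self-referentially, so the regularity condition at $z=0$ must be imposed and solved consistently in order to isolate $g$, rather than read off directly.
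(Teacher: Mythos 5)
Your strategy is genuinely different from the paper's. The paper starts from the explicit transient solution $F(z,t)$ obtained by characteristics, extracts a Volterra convolution equation for $p(0,t)$ from the cancellation of the $z^{-N}$ singularity, solves it in the Laplace domain via Prudnikov's formula for $\mathcal{L}\bigl[(1-e^{-ct})^{j}(A+e^{-ct})^{N}\bigr]$, and recovers $F(z)=\lim_{\eta\to 0}\eta\,\mathcal{L}_{\eta}[F(z,\cdot)]$ by the Tauberian theorem. You instead set $\partial_t F=0$ in (\ref{eq:diffF}) and integrate the resulting singular ODE. Much of your plan is sound: the integrating factor $\Phi(z)=z^{N}(\mu+\lambda z)^{-2N-\xi/(\lambda+\mu)}(1-z)^{\xi/(\lambda+\mu)}$ is correct; killing the homogeneous branch $(1-z)^{-\xi/(\lambda+\mu)}$ by analyticity at $z=1$ is the right way to fix the constant (and automatically yields $F(1)=1$ by L'H\^opital); and your determination of $g$ from the vanishing of the $z^{-N}$ coefficient is exactly right — only $j=N-1$ and $j=N$ survive, the argument degenerates to $-\lambda/\mu$, and $\Gamma(N+1)=N\,\Gamma(N)$ gives precisely (\ref{g_1}). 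This is in fact the stationary analogue of the condition the paper imposes at the transient level, and it buys a shorter route that avoids the Laplace/Tauberian machinery altogether.

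The gap is in your second step. The integrals to be evaluated are $\int_{1}^{z}w^{a}(1-w)^{b}(\mu+\lambda w)^{-(2N+1+\xi/(\lambda+\mu))}\,dw$ with $a\in\{N-1,N\}$, $b\in\{\xi/(\lambda+\mu),\,\xi/(\lambda+\mu)-1\}$: three distinct linear factors. Under your affine substitution $w=1-(1-z)t$ the factor $\mu+\lambda w$ becomes $(\lambda+\mu)\bigl(1-\tfrac{\lambda(1-z)}{\lambda+\mu}t\bigr)$ raised to the large \emph{negative} power $-(2N+1+\xi/(\lambda+\mu))$, which admits no finite binomial expansion; expanding the integer power $(1-(1-z)t)^{a}$ instead leaves ${}_2F_1$'s with first parameter $2N+1+\xi/(\lambda+\mu)$, which the Pfaff transformation turns into $k-2N$ (with $k$ the binomial index), not $-N$, and the weights $[(\lambda+\mu)z]^{N-j}[\mu(1-z)]^{j}$ never appear. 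You would land on a different (if ultimately equivalent) finite sum of hypergeometrics, and matching it to (\ref{F_asympt}) requires contiguity/resummation identities you have not supplied. The substitution that closes the argument is the projective one induced by the characteristics, $u=\frac{(1-w)(\mu+\lambda z)}{(1-z)(\mu+\lambda w)}$, with $\theta:=\frac{\lambda(1-z)}{\mu+\lambda z}$: it sends $w=1\mapsto u=0$, $w=z\mapsto u=1$, gives $\mu+\lambda w=\frac{\lambda+\mu}{1+u\theta}$ and $1-w=\frac{u\theta(\lambda+\mu)}{\lambda(1+u\theta)}$, so that the exponents of $(1+u\theta)$ recombine to the nonnegative integer $N$, while $w=\frac{(\lambda+\mu)z+\mu(1-z)(1-u)}{(\mu+\lambda z)(1+u\theta)}$ — whose numerator, expanded binomially, is the true source of the weights $[(\lambda+\mu)z]^{N-j}[\mu(1-z)]^{j}$ and of the $(1-u)^{j}$ in the Euler integral. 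One then reads off $B\bigl(\tfrac{\xi}{\lambda+\mu}+1,j+1\bigr)\,{}_2F_1\bigl(\tfrac{\xi}{\lambda+\mu}+1,-N;\tfrac{\xi}{\lambda+\mu}+j+2;-\theta\bigr)$ and its companion directly, with no Pfaff step. With that repair your proof goes through; as written, step two does not reach the stated formula.
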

The proof is provided in  \ref{proof_prop_asym}.
\par
The result obtained in Proposition \ref{asymptotic} allows us to disclose the explicit expression of the probabilities $\rho(k)$, for all $\lambda,\mu,\xi>0$.

%
\begin{proposition}
For $r=1,\ldots,N$ the asymptotic probabilities (\ref{rel_asympt_prob_bid}) are given by
\begin{eqnarray}
&& \hspace*{-1.8cm}
\rho(r)=\Gamma\left(\frac{\xi}{\lambda+\mu}+1\right)\left(\frac{\lambda}{\lambda+\mu}\right)^r {N \choose r}\left[g(\lambda,\mu,\xi,N)\left(\frac{\mu}{\lambda+\mu}\right)\right.
\nonumber\\
&& \hspace*{-1.2cm}
+\sum_{j=0}^{n-1}\left(-\frac{\mu}{\lambda+\mu}\right)^j\frac{n!(n-j)}{j!(n-j)!(j+1+r)}\frac{\Gamma(j+2+r)}{\Gamma\left(2+j+r+\frac{\xi}{\lambda+\mu}\right)}
\nonumber\\
&& \hspace*{-1.2cm}
\times\, \,  {}_{3}F_{2}\left(\{j+1+r,-N+r,2+j+r\},\left\{1+r,2+j+r+\frac{\xi}{\lambda+\mu}\right\},\frac{\lambda}{\lambda+\mu}\right)
\nonumber\\
&& \hspace*{-1.2cm}
+\sum_{j=0}^{n}\left(-\frac{\mu}{\lambda+\mu}\right)^j{n \choose j}\frac{\Gamma(j+1+r)}{\Gamma\left(1+j+r+\frac{\xi}{\lambda+\mu}\right)}
\nonumber\\
&& \hspace*{-1.2cm}
\times  \left.{}_{3}F_{2}\left(\{j+1+r,1+j+r,-n+s\},\left\{1+r,1+j+r+\frac{\xi}{\lambda+\mu}\right\},\frac{\lambda}{\lambda+\mu}\right)\right],
\label{prob_asympt}
\end{eqnarray}
where ${}_{3}F_{2}(\{\gamma_1,\gamma_2,\gamma_3\},\{\delta_1, \delta_2\},z)$ denotes the generalized hypergeometric function.
Moreover, for $r=0$ we have
\begin{eqnarray*}
&& \hspace*{-1.2cm}
\rho(0)=\frac{\xi}{\xi+N(\lambda+\mu)}{}_{3}F_{2}\left(\left\{\frac{1}{2},-N,N+1\right\},\left\{1-N-\frac{\xi}{\lambda+\mu},1+N+\frac{\xi}{\lambda+\mu}\right\},\frac{4\lambda \mu}{(\lambda+\mu)^2}\right)
\nonumber\\
&& \hspace*{-0.6cm}
-g(\lambda,\mu,\xi,N) \Gamma\left(\frac{\xi}{\lambda+\mu}+1\right) \sum_{j=1}^N {N \choose j}\left(-\frac{\mu}{\lambda+\mu}\right)^j
\frac{\Gamma(j+1)}{\Gamma\left(j+1+\frac{\xi}{\lambda+\mu}\right)}
\nonumber \\
&& \hspace*{-0.6cm}
\times \,  {}_{3}F_{2}\left(\{j,1+j,-N\},\left\{1,1+j+\frac{\xi}{\lambda+\mu}\right\},\frac{\lambda}{\lambda+\mu}\right),
 \end{eqnarray*}
with $g(\lambda,\mu,N)$  defined in Eq. (\ref{g_1}).
\label{probasintotiche}
\end{proposition}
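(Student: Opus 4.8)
The plan is to read off the stationary marginal probabilities as the Taylor coefficients of the closed form for $F(z)$ established in Proposition~\ref{asymptotic}. Since ${\cal N}$ takes values in $\{0,1,\ldots,N\}$, the generating function $F(z)=\sum_{k=0}^N \rho(k)\,z^k$ is a polynomial of degree $N$, so that $\rho(r)=[z^r]F(z)$ for every $r$. The starting point is therefore Eq.~(\ref{F_asympt}), and the whole argument consists in expanding its right-hand side in powers of $z$ and isolating the coefficient of $z^r$.

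First I would exploit the fact that the two Gauss hypergeometric functions appearing in (\ref{F_asympt}) terminate: since their second numerator parameter equals $-N$, a nonpositive integer, one has ${}_{2}F_{1}(a,-N,c;w)=\sum_{m=0}^{N}\frac{(a)_m(-N)_m}{(c)_m\,m!}\,w^m$, a finite sum. Substituting the argument $w=-\frac{\lambda(1-z)}{\lambda z+\mu}$ gives $w^m=(-1)^m\lambda^m(1-z)^m(\lambda z+\mu)^{-m}$, and the factor $(\lambda z+\mu)^{-m}$ combines with the global factor $(\lambda z+\mu)^{N}$ to produce $(\lambda z+\mu)^{N-m}$. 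Because $0\le m\le N$, every negative power of $(\lambda z+\mu)$ disappears; this is the algebraic mechanism by which $F(z)$ is revealed to be a genuine polynomial despite the $z^{-N}$ prefactor, and it provides a useful internal consistency check. At this stage the right-hand side of (\ref{F_asympt}) is an explicit finite combination of the elementary factors $z^{-N}$, $z^{N-j}$, $(\lambda z+\mu)^{N-m}$ and integer powers of $(1-z)$, each of which I would expand by the binomial theorem.

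Next I would collect the total exponent of $z$ across all these binomial expansions and extract the coefficient of $z^r$. This produces, for each of the two terms of (\ref{F_asympt}), a multiple sum over the outer index $j$, the hypergeometric index $m$, and one or two binomial summation indices. The decisive step is to perform one of the inner summations in closed form---matching the Pochhammer symbols and using a Vandermonde/Gauss-type evaluation---so as to reduce the number of free sums by one; the surviving double sum then reassembles into a single generalized hypergeometric series ${}_{3}F_{2}$ with argument $\frac{\lambda}{\lambda+\mu}$, yielding exactly (\ref{prob_asympt}). The factor $g(\lambda,\mu,\xi,N)$ and the Gamma ratios are carried along unchanged from the two summands of (\ref{F_asympt}).

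Finally, for $r=0$ I would treat $\rho(0)=F(0)$ separately. Here the $z^{-N}$ prefactor is singular at $z=0$, so the constant term is obtained only after the leading contributions of the two terms in (\ref{F_asympt}) cancel; equivalently one may extract $[z^0]$ by selecting the $z^{N}$ coefficient of the numerator. Recombining the two resulting pieces and applying a quadratic hypergeometric transformation---note that $\frac{4\lambda\mu}{(\lambda+\mu)^2}=4x(1-x)$ with $x=\frac{\lambda}{\lambda+\mu}$, which is precisely the argument relation of such a transformation---brings the answer to the stated form. The main obstacle throughout is the bookkeeping of powers of $z$: the hypergeometric argument carries $z$ in both numerator and denominator, so one must track how the $(1-z)$ powers, the $(\lambda z+\mu)^{N-m}$ factors and the $z^{-N}$ prefactor interact, and then correctly identify which summation indices become the numerator and denominator parameters of the final ${}_{3}F_{2}$. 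The $r=0$ case is the most delicate, precisely because it relies on the cancellation of the singular part and on a nontrivial hypergeometric transformation.
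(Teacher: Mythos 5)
Your proposal is correct and follows essentially the same route as the paper, whose proof consists precisely of the one-line statement that the result is obtained by series expansion of the asymptotic probability generating function in Eq.~(\ref{F_asympt}); you have simply spelled out the mechanics (termination of the ${}_{2}F_{1}$'s at $-N$, absorption of $(\lambda z+\mu)^{-m}$ into the prefactor, binomial expansion, resummation into ${}_{3}F_{2}$, and the separate treatment of $r=0$) that the authors leave implicit.
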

\begin{proof}
The results are obtained through the series expansion of the asymptotic probability generating function given in Eq. (\ref{F_asympt}).
\end{proof}
The following Proposition provides the expression of the asymptotic probabilities (\ref{rhok}) in the special case $\lambda=\mu$.
%
\begin{proposition}\label{prob_asympt_l_equal_m}
When $\lambda=\mu>0$ the asymptotic probability $\rho(0)$ is given by
\begin{equation}
\rho(0)=\frac{2}{1+\frac{\left(1+\frac{\xi}{4 \lambda}\right)_N}{\left(\frac{1}{4}\left(2+\frac{\xi}{\lambda}\right)\right)_N}}.
\label{prob_asympt_0_l_equal_m}
\end{equation}
Moreover, for $r=1,\ldots,N$, we have
\begin{eqnarray}
&& \hspace*{-1.2cm}
\rho(r)=\left(\frac{1}{4}\right)^N {2N \choose N-r} r! \frac{\Gamma\left(1+\frac{\xi}{4 \lambda}\right)\Gamma\left(\frac{1}{2}+\frac{\xi}{4 \lambda}\right)}{\Gamma\left(1+N+\frac{\xi}{4 \lambda}\right)\Gamma\left(\frac{1}{2}+\frac{\xi}{4 \lambda}\right)+\Gamma\left(1+\frac{\xi}{4 \lambda}\right)\Gamma\left(N+\frac{1}{2}+\frac{\xi}{4 \lambda}\right)}
\nonumber \\
&& \hspace*{-0.5cm}
\times \left({}_{3}F_{2}\left(\{r-N,\frac{1}{2}+r,\frac{\xi}{4\lambda}\},\left\{\frac{1}{2}-N,r+1+\frac{\xi}{4\lambda}\right\},1\right)\frac{\Gamma\left(1+N+\frac{\xi}{4 \lambda}\right)}{\Gamma\left(1+r+\frac{\xi}{4 \lambda}\right)}\right.
\nonumber\\
&& \hspace*{-0.5cm}
+\left.{}_{3}F_{2}\left(\{r-N,\frac{1}{2}+r,\frac{\xi}{4\lambda}+\frac{1}{2}\},\left\{\frac{1}{2}-N,r+\frac{1}{2}+\frac{\xi}{4\lambda}\right\},1\right)\frac{\Gamma\left(\frac{1}{2}+N+\frac{\xi}{4 \lambda}\right)}{\Gamma\left(\frac{1}{2}+r+\frac{\xi}{4 \lambda}\right)}\right).
\label{prob_asympt_l_equal_mu}
\end{eqnarray}
\end{proposition}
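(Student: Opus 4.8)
The plan is to obtain the special case $\lambda=\mu$ by specializing the general asymptotic formulas of Proposition~\ref{probasintotiche}, and then to reduce the resulting hypergeometric objects by means of classical summation and quadratic transformation theorems. An alternative route is to pass to the limit $t\to+\infty$ in the transient expressions of Proposition~\ref{proppktransiente}, using that $\theta_2(t,y)\to \tfrac{y}{\xi+y}$ and that the exponentials $e^{-(\xi+|\alpha_k|)t}$ vanish; this, however, merely repackages the same algebra, since one is then left to resum the finite family indexed by the roots $\alpha_k$. I therefore work from Eq.~(\ref{prob_asympt}) and its companion for $\rho(0)$, where after setting $\lambda=\mu$ every ratio $\tfrac{\lambda}{\lambda+\mu}$ and $\tfrac{\mu}{\lambda+\mu}$ equals $\tfrac12$, while $\tfrac{\xi}{\lambda+\mu}$ becomes $\tfrac{\xi}{2\lambda}$.

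First I would evaluate $g(\lambda,\mu,\xi,N)$ of Eq.~(\ref{g_1}). At $\lambda=\mu$ both hypergeometric factors are ${}_{2}F_{1}$'s evaluated at $-\tfrac{\lambda}{\mu}=-1$. The numerator has parameters $a=\tfrac{\xi}{2\lambda}$, $b=-N$, $c=\tfrac{\xi}{2\lambda}+N+1=1+a-b$, so it is summed directly by Kummer's theorem
\[
{}_{2}F_{1}(a,b;1+a-b;-1)=\frac{\Gamma(1+a-b)\,\Gamma(1+\tfrac{a}{2})}{\Gamma(1+a)\,\Gamma(1+\tfrac{a}{2}-b)},
\]
which is exactly what produces the half-argument $\tfrac{\xi}{4\lambda}=\tfrac{a}{2}$ seen in the statement. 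The denominator has first parameter $a+1$ with the same $c$, so that $c=(a+1)-b$ falls one unit short of the Kummer value $1+(a+1)-b$, and Kummer does not apply verbatim; I would bring it to summable form through a Gauss contiguous relation linking ${}_{2}F_{1}(a+1,b;c;-1)$ to ${}_{2}F_{1}(a,b;c;-1)$. The ratio $g$ then collapses to an explicit quotient of Gamma functions in $\tfrac{\xi}{4\lambda}$.

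Next I would treat the ${}_{3}F_{2}$'s appearing in $\rho(r)$, which at $\lambda=\mu$ are evaluated at argument $\tfrac12$ and must be turned into the ${}_{3}F_{2}$'s at argument $1$ with the half-integer parameters $\tfrac12+r$ and $\tfrac12-N$ that figure in Eq.~(\ref{prob_asympt_l_equal_mu}). This is carried out by the quadratic transformation appropriate to the value $\tfrac12$; equivalently, working from $F(z)$ of Proposition~\ref{asymptotic} with $\lambda=\mu$, the ${}_{2}F_{1}$'s acquire argument $-\tfrac{1-z}{1+z}$, for which the standard quadratic transformation applies. It is precisely this step that halves the $\xi$-parameters and introduces the $\tfrac12$ shifts. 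Collecting all Gamma and Pochhammer factors and simplifying yields Eq.~(\ref{prob_asympt_l_equal_mu}). For $r=0$ one can instead read off $\rho(0)=F(0)$ directly: at $z=0$ the quadratic argument becomes $-1$, Kummer applies once more, and the normalization $\rho(0)+\sum_{r=1}^{N}\rho(r)=1$ lets one recognize the common denominator $(\tfrac12+\tfrac{\xi}{4\lambda})_N+(1+\tfrac{\xi}{4\lambda})_N$, giving the compact form~(\ref{prob_asympt_0_l_equal_m}).

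I expect the quadratic-transformation step for the ${}_{3}F_{2}$ to be the main obstacle: a generic ${}_{3}F_{2}$ admits no quadratic transformation, so one must verify that the particular near well-poised parameter configuration produced by $\lambda=\mu$ is exactly of the transformable type, and then keep careful track of the Gamma-function bookkeeping so that everything telescopes onto the clean Pochhammer ratios. By comparison, the Kummer/contiguous evaluation of $g$ and the passage to the limit are routine.
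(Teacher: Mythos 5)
Your starting point differs from the paper's: you specialize the coefficient formulas of Proposition \ref{probasintotiche}, whereas the paper returns to the generating function $F(z)$ of Proposition \ref{asymptotic}, sets $\lambda=\mu$ there first, and only then extracts the coefficients of $z^0$ and $z^r$. Your evaluation of $g(\lambda,\mu,\xi,N)$ at $\lambda=\mu$ via Kummer's theorem at argument $-1$ (with a contiguous relation for the denominator, whose parameters miss the Kummer configuration by one unit) is sound and is consistent with the Gamma-ratio $\Gamma\left(1+N+\frac{\xi}{4\lambda}\right)\Gamma\left(\frac12+\frac{\xi}{4\lambda}\right)\big/\big[\Gamma\left(1+\frac{\xi}{4\lambda}\right)\Gamma\left(N+\frac12+\frac{\xi}{4\lambda}\right)\big]$ that the paper's formulas display; likewise the normalization argument for $\rho(0)$ is workable.

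The genuine gap is at the step you yourself flag as the main obstacle, and it is larger than a missing quadratic transformation. After setting $\lambda=\mu$ in Eq.\ (\ref{prob_asympt}) you are not facing a single ${}_{3}F_{2}$ at argument $\tfrac12$: each $\rho(r)$ still carries an outer sum over $j$ from $0$ to $N-1$ (resp.\ $N$) of such ${}_{3}F_{2}$'s with $j$-dependent parameters and weights $\left(-\tfrac12\right)^j$, i.e.\ a double hypergeometric (Kamp\'e de F\'eriet--type) sum, and no quadratic transformation of an individual ${}_{3}F_{2}$ collapses that outer sum into the two clean ${}_{3}F_{2}(\cdots;1)$'s of Eq.\ (\ref{prob_asympt_l_equal_mu}). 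The paper's proof avoids this by working with the coefficient of $z^r$ in $F(z)\big|_{\lambda=\mu}$, where the inner objects are terminating ${}_{2}F_{1}(\cdot,\cdot;\cdot;2)$'s, rewritten as Jacobi polynomials; the rational weights $\xi/(4k\lambda+\xi)$ and $1/(k+\tfrac{\xi}{4\lambda}-\tfrac12)$ are replaced by Euler integrals $\int_0^1 y^{k+\frac{\xi}{4\lambda}-1}\,{\rm d}y$ (and the analogous half-shifted one), after which the $k$-sum becomes a Jacobi-polynomial generating function with the closed form $(1+\sqrt{y})^{2N-r}(1-\sqrt{y})^{r}\,{}_{2}F_{1}\!\left(r-N,-N;-2N;\tfrac{4\sqrt{y}}{(1+\sqrt{y})^{2}}\right)$, and the remaining $y$-integral produces exactly the ${\rm Beta}\left(\tfrac{\xi}{4\lambda},r+1\right)$ and ${\rm Beta}\left(\tfrac{\xi}{4\lambda}+\tfrac12,r\right)$ factors together with the ${}_{3}F_{2}$'s at unit argument. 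That integral-representation-plus-generating-function device is the idea your proposal is missing; without it (or an equivalent double-sum reduction), the passage from the specialized Eq.\ (\ref{prob_asympt}) to Eq.\ (\ref{prob_asympt_l_equal_mu}) does not go through.
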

\begin{proof}
The proof is provided in  \ref{proof_prob_asympt_l_equal_m}.
\end{proof}
\begin{figure}[t]
\centering
\includegraphics[width=5cm]{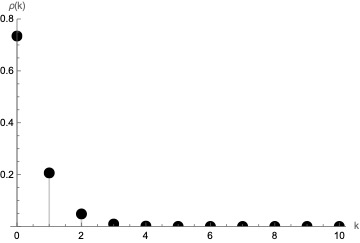}
\includegraphics[width=5cm]{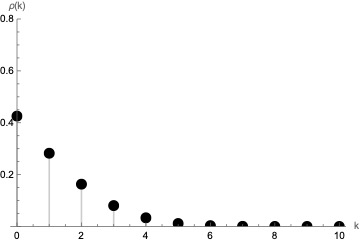}
\includegraphics[width=5cm]{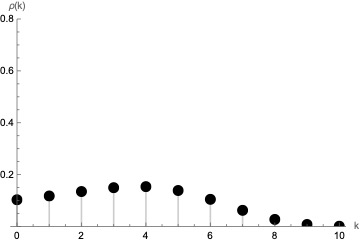}
\caption{The probabilities $\rho(k)$ given in (\ref{prob_asympt}) are plotted for $N=10$, $\lambda=1,\mu=3$, $\lambda=\mu=1$, $\lambda=3,\mu=1$ from left to right, with  $\xi=2$.
}
\label{prob_asympt_N10}
\end{figure}
 
Figure \ref{prob_asympt_N10} shows the probabilities $\rho(k)$, for $N=10$, $\xi=2$ and different values of $\lambda$ and $\mu$. When $\lambda \leq \mu$ the probability $\rho(k)$ is decreasing in $k$. Conversely, when $\lambda>\mu$ the function $\rho(k)$ increases up to a certain value of $k$ and then decreases thereafter.

\begin{remark}
When $\lambda = \mu$ and $d=2$ it is possible to establish an explicit correspondence between the stationary probabilities  $\rho(r)$ given in Eq. (\ref{prob_asympt_l_equal_mu})
and those of the classical Ehrenfest model, denoted by $q_r$, as reported in Eq. (18) of \cite{Dharmaraja}.
To derive this relationship, we first introduce a suitable normalization constant $c(\lambda, \xi,N)$ defined as
$$
c(\lambda, \xi,N)=\left(\sum_{j=-N}^0 q_j+\sum_{j=0}^N q_j  \right).
$$
Then, the following identity holds:
$$
\rho(r)=\frac{q_r+q_{-r}}{c(\lambda, \xi,N)},  \qquad r=0,1,\ldots,N.
$$
It is worth noting that, due to the special role of state $0$ in the present multi-type Ehrenfest model, one obtains
$\rho(0) = 2 q_0/c(\lambda, \xi,N)$.
\end{remark}


\begin{figure}[t]
\centering
\includegraphics[width=7cm]{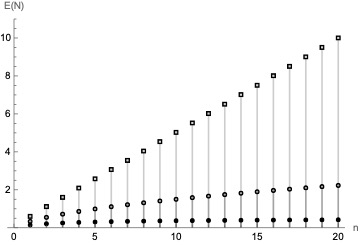}
\includegraphics[width=7cm]{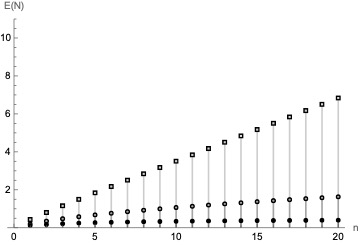}
\caption{The asymptotic expected value without catastrophes ($\xi=0$) on the left and with catastrophes ($\xi=2$) on the right, for $N =0,1,\ldots,20$ with $\lambda=\mu=1$ (empty circle), $\lambda=1,\mu=3$ (full circle), $\lambda=3,\mu=1$ (square).}
\label{asympt_mean}
\end{figure}

\begin{figure}[t]
\centering
\includegraphics[width=7cm]{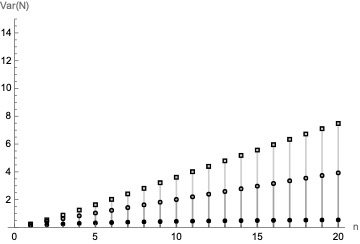}
\includegraphics[width=7cm]{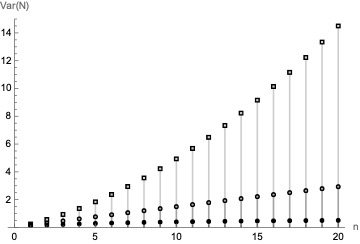}
\caption{The asymptotic variance without catastrophes ($\xi=0$) on the left and with catastrophes ($\xi=2$) on the right, for $N =0,1,\ldots,20$ with $\lambda=\mu=1$ (empty circle), $\lambda=1,\mu=3$ (full circle), $\lambda=3,\mu=1$ (square).}
\label{asympt_var}
\end{figure}		

The stationary probability $\rho(0)$ plays a relevant role, as it represents the asymptotic probability that the system is empty. 
For this reason, in the sequel we focus on its characterization and analysis, when $\lambda=\mu$.

\begin{proposition}
If $\lambda=\mu$, the asymptotic probability $\rho(0)$ in (\ref{prob_asympt_0_l_equal_m}) exhibits a monotonic dependence on the parameters: it is a decreasing function with respect to both $\lambda$ and $N$, 
while it increases as a function of $\xi$.
\end{proposition}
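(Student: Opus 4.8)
The plan is to work directly with the closed-form expression (\ref{prob_asympt_0_l_equal_m}) and reduce all three monotonicity claims to the behaviour of a single ratio of Pochhammer symbols. Setting $a:=\frac{\xi}{4\lambda}$ and observing that $\frac{1}{4}\left(2+\frac{\xi}{\lambda}\right)=\frac{1}{2}+a$, I would first rewrite
$$
\rho(0)=\frac{2}{1+R(a,N)},\qquad R(a,N):=\frac{(1+a)_N}{\left(\frac{1}{2}+a\right)_N}=\prod_{k=1}^{N}\frac{k+a}{k-\frac{1}{2}+a}.
$$
Since the map $R\mapsto \frac{2}{1+R}$ is strictly decreasing on $R>0$, every monotonicity statement for $\rho(0)$ is equivalent to the opposite monotonicity for $R$; it therefore suffices to determine how $R$ depends on $a$ and on $N$.

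For the continuous parameters I would analyse $R$ through its logarithm. Each factor of the product is positive and strictly greater than $1$ (because $k+a>k-\frac{1}{2}+a$), so $R>1$ and
$$
\frac{\partial}{\partial a}\log R=\sum_{k=1}^{N}\left[\frac{1}{k+a}-\frac{1}{k-\frac{1}{2}+a}\right]<0,
$$
each summand being negative. Hence $R$ is strictly decreasing in $a$. The chain rule, together with $\frac{\partial a}{\partial\xi}=\frac{1}{4\lambda}>0$ and $\frac{\partial a}{\partial\lambda}=-\frac{\xi}{4\lambda^{2}}<0$, then shows that $R$ is decreasing in $\xi$ and increasing in $\lambda$. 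Transferring these conclusions through the decreasing map above yields that $\rho(0)$ is increasing in $\xi$ and decreasing in $\lambda$, as claimed.

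For the dependence on the discrete parameter $N$ I would simply compare consecutive values: passing from $N$ to $N+1$ multiplies $R$ by the factor $\frac{(N+1)+a}{(N+1)-\frac{1}{2}+a}>1$, so $R(a,N)$ is strictly increasing in $N$. By the same decreasing-map argument, $\rho(0)$ is decreasing in $N$.

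The argument is essentially routine once the product representation of the Pochhammer ratio is in place; I expect no genuine analytic obstacle. The only point requiring care is bookkeeping of the sign reversals, namely that a single monotonicity of $R$ in $a$ produces \emph{opposite} conclusions for $\lambda$ and $\xi$ (through the signs of $\partial a/\partial\lambda$ versus $\partial a/\partial\xi$) and that the outer map $R\mapsto 2/(1+R)$ reverses the direction once more. Keeping these two reversals straight is the whole content of the verification.
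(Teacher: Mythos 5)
Your proof is correct. It rests on the same reduction as the paper's: substitute $\eta=\xi/(4\lambda)$, write $\rho(0)=2/(1+R)$ with $R$ the ratio of Pochhammer symbols, and track how $R$ varies. The difference is in execution. The paper differentiates $R$ via Gamma functions, obtaining a combination of polygamma values $\psi^{(0)}(\tfrac12+N+\eta)-\psi^{(0)}(\tfrac12+\eta)-\psi^{(0)}(1+N+\eta)+\psi^{(0)}(1+\eta)$, and then invokes the identity $\psi^{(0)}(z+N)-\psi^{(0)}(z)=\sum_{k=0}^{N-1}(z+k)^{-1}$ to recognize this as the positive sum $\tfrac12\sum_{k=0}^{N-1}[(1+\eta+k)(\tfrac12+\eta+k)]^{-1}$; your termwise differentiation of $\log\prod_{k=1}^N\frac{k+a}{k-1/2+a}$ produces exactly the same sum (with the expected sign flip, since you differentiate $\log R$ rather than $\rho$) without any special-function machinery. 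For the dependence on $N$, your argument is actually cleaner and more rigorous than the paper's: you compare $R(a,N+1)/R(a,N)=\frac{N+1+a}{N+1/2+a}>1$ directly, whereas the paper differentiates with respect to $N$ as if it were a continuous variable and appeals to the monotonicity of the polygamma function — legitimate, but an unnecessary detour for an integer parameter. One small point worth making explicit: the monotonicity in $\lambda$ genuinely requires $\xi>0$ (when $\xi=0$ one has $a\equiv 0$ and $\rho(0)$ is constant in $\lambda$); your sign $\partial a/\partial\lambda=-\xi/(4\lambda^2)<0$ silently uses this, which is consistent with the paper's standing assumption $\xi>0$ but deserves a word.
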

\begin{proof}
Let us set $\eta:=\frac{\xi}{4\,\lambda}$, so that Eq. (\ref{prob_asympt_0_l_equal_m}) can be rewritten as 
\begin{equation*}
	\rho^{\eta}_N(0):=\frac{2}{1+\frac{\left(1+\eta \right)_N}{\left(\frac{1}{2}+\eta \right)_N}}.
\end{equation*}
The asymptotic behavior of the probability $\rho^{\eta}_N(0)$ is rigorously investigated by computing its derivative with respect to a parameter. The sign of
$\frac{\partial}{\partial \eta}\rho^{\eta}_N(0)$ is governed by the sign of the following term 
\begin{eqnarray}
&& \hspace*{-1cm}
\left(
 \psi^{(0)}\left(\frac{1}{2} + N + \eta\right) -\psi^{(0)}\left(\frac{1}{2} + \eta\right) -
 \psi^{(0)}(1 + N + \eta)+\psi^{(0)}(1 + \eta)\right) {\rm d}\eta
 \nonumber
 \\
 && \hspace*{-1cm}
=\left(\frac{1}{2}\sum_{k=0}^{N-1}\frac{1}{(1+\eta+k)\left(\frac{1}{2}+\eta+k\right)}\right) {\rm d}\eta,
 \label{sign_der_rho0_eta}
\end{eqnarray}
where $\psi^{(n)}(z)$ is the Polygamma function defined in Eq. 6.4.1 of \cite{Abramowitz}, and the last equality is due to Eq. 6.4.10 of \cite{Abramowitz}. 
Given that the quantity within the brackets of (\ref{sign_der_rho0_eta}) is a summation of positive terms, it is inherently positive. Consequently, the sign of $\frac{\partial}{\partial \eta}\rho^{\eta}_N(0)$ is determined 
by the sign of the differential ${\rm d}\eta = {\rm d}\left(\frac{\xi}{4\lambda}\right)$. This leads to two critical observations regarding the component parameters: (i) 
viewing $\rho(0)$ as a function of $\xi$ (and holding $\lambda$ and $N$ constant), the sign of the derivative is positive, confirming that $\rho(0)$ is an increasing function of $\xi$; 
(ii) when considering $\rho(0)$ as a function of $\lambda$ (with $\xi$ and $N$ constant), the sign of the derivative is negative, establishing that $\rho(0)$ is a decreasing function of $\lambda$.
\par
Regarding the monotonicity with respect to the parameter $N$, we evaluate the derivative of $\rho^{\eta}_N(0)$ with respect to $N$: its sign is determined by the term  
$\psi^{(0)}\left(\frac{1}{2}+ N + \eta\right)-\psi^{(0)}\left(1+ N + \eta\right)$. Since the Polygamma function $\psi^{(n)}(z)$ is an increasing function of $z$,  we have that 
 $\rho(0)$ is a decreasing function of $N$.
 \end{proof}

The next proposition provides the asymptotic mean and variance of ${\cal N}$.

\begin{proposition}\label{mean_var_asym}
 The expected value and variance of ${\cal N}$ are given by
\begin{eqnarray}
&& \hspace*{-0.7cm}
   E[{\cal N}]=\frac{N}{\lambda+\mu+\xi}\left[(\lambda+\mu(g(\lambda,\mu,N)-1)\right],
   \nonumber\\
  && \hspace*{-0.7cm}
    Var[{\cal N}]=\frac{N}{(\lambda+\mu+\xi)^2}\left\{\frac{2 \lambda \mu (\lambda+\mu) [2-(n+1)g(\lambda,\mu,N)]}{2(\lambda+\mu)+\xi}\right.
     +N \mu^2 g(\lambda,\mu,N)\left[\frac{2(\lambda+\mu)}{2(\lambda+\mu)+\xi}-g(\lambda,\mu,N)\right]
    \nonumber\\
    && \hspace*{-0.7cm}
    \left.+\frac{\xi}{2(\lambda+\mu)+\xi}[(1+n)(\lambda^2+\mu^2)-2 \lambda \mu(n-3)-\mu (3\lambda+\mu)g(\lambda,\mu,N)+\xi(\lambda+\mu(1-g(\lambda,\mu,N)))]\right\}, \nonumber
    \end{eqnarray}
    where $g(\lambda,\mu,\xi,N)$ has been defined in Eq. (\ref{g_1}).
    \end{proposition}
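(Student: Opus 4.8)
The plan is to read off both moments from the stationary probability generating function $F(z)=\lim_{t\to\infty}F(z,t)$, using the elementary identities $E[{\cal N}]=F'(1)$, $E[{\cal N}({\cal N}-1)]=F''(1)$, and hence $Var[{\cal N}]=F''(1)+F'(1)-[F'(1)]^2$, where primes denote differentiation in $z$. There are two natural routes: a direct one that differentiates the closed form (\ref{F_asympt}), and a more economical one, which I would follow first, that exploits the differential equation satisfied by $F(z)$ and never touches the hypergeometric functions explicitly.

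For the second route I would pass to the stationary limit in (\ref{eq:diffF}): as $t\to\infty$ the left-hand side vanishes, $F(z,t)\to F(z)$ and $p(0,t)\to\rho(0)=F(0)$, so that
\begin{equation*}
0=\xi-\mu\frac{N}{z}(1-z)\rho(0)+\left[\frac{N}{z}(1-z)(\mu-\lambda z)-\xi\right]F(z)+(1-z)(\mu+\lambda z)F'(z).
\end{equation*}
Multiplying by $z$ turns this into an identity $A(z)\equiv 0$ with polynomial coefficients,
\begin{equation*}
A(z)=\xi z-\mu N(1-z)\rho(0)+B(z)F(z)+C(z)F'(z),
\end{equation*}
where $B(z)=N(1-z)(\mu-\lambda z)-\xi z$ and $C(z)=z(1-z)(\mu+\lambda z)$ satisfy $B(1)=-\xi$ and $C(1)=0$. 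Since $A$ vanishes identically, so do all its derivatives. Evaluating $A'(z)=0$ at $z=1$ (where the $F''$-term disappears because $C(1)=0$) gives a linear equation with the single unknown $F'(1)$, yielding $F'(1)=\frac{N[\lambda-\mu+\mu\rho(0)]}{\lambda+\mu+\xi}$. Evaluating $A''(z)=0$ at $z=1$ then expresses $F''(1)$ as an affine function of $F'(1)$ with denominator $2(\lambda+\mu)+\xi$; substituting the value of $F'(1)$ gives $F''(1)$ explicitly in terms of $\rho(0)$.

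It remains to rewrite the answer through $g(\lambda,\mu,\xi,N)$ of (\ref{g_1}), which amounts to the identity $\rho(0)=g(\lambda,\mu,\xi,N)$; this follows from the proof of Proposition \ref{asymptotic}, where $g$ is precisely the constant fixed by the behaviour of $F$ at $z=0$, and is confirmed a posteriori by the direct computation below. With $\rho(0)=g$ one immediately obtains $E[{\cal N}]=\frac{N}{\lambda+\mu+\xi}[\lambda+\mu(g-1)]$, and inserting $F'(1)$, $F''(1)$ and $\rho(0)=g$ into $Var[{\cal N}]=F''(1)+F'(1)-[F'(1)]^2$ yields the stated variance, the $-[F'(1)]^2$ contribution being the source of the quadratic term $g^2$.

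I expect the main obstacle to be purely algebraic: collecting the several $\xi$-dependent contributions, the linear term in $g$ and the quadratic term in $g$ into the compact closed form displayed in the statement, while managing the common denominators $(\lambda+\mu+\xi)^2$ and $2(\lambda+\mu)+\xi$. As a check, and as an independent derivation that produces $g$ natively rather than via $\rho(0)$, one may differentiate (\ref{F_asympt}) directly at $z=1$: the argument $-\lambda(1-z)/(\lambda z+\mu)$ of each ${}_2F_1$ vanishes there, so ${}_2F_1(\cdots;0)=1$, and the factors $[\mu(1-z)]^j$ (together with the extra $(1-z)$ prefactor of the first sum) annihilate all but the lowest-order terms in $j$, reducing each sum to a few terms. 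The chain rule $\frac{d}{dz}{}_2F_1(a,b,c;u)=\frac{ab}{c}{}_2F_1(a+1,b+1,c+1;u)\,u'(z)$ with $u'(1)=\lambda/(\lambda+\mu)$ then reproduces the same $F'(1)$, which already verifies $\rho(0)=g$ and lends confidence to the more delicate $F''(1)$ computation.
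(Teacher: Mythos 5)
Your proposal is correct, but it takes a genuinely different route from the paper. The paper's proof consists of ``direct calculations starting from the expression of the probability generating function given in Eq.~(\ref{F_asympt})'', i.e.\ it differentiates the explicit hypergeometric closed form of $F(z)$ at $z=1$ --- this is your secondary ``check'' route. Your primary argument instead extracts $F'(1)$ and $F''(1)$ from the stationary version of the PDE (\ref{eq:diffF}), exploiting that the coefficient $C(z)=z(1-z)(\mu+\lambda z)$ of $F'$ vanishes at $z=1$ so that each successive derivative of the identity, evaluated at $z=1$, is a linear equation in one new unknown (with denominators $\lambda+\mu+\xi$ and $2(\lambda+\mu)+\xi$ appearing exactly as in the statement). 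This is cleaner: it never touches the ${}_2F_1$'s and reduces the whole computation to a short recursion plus the single input $\rho(0)$. The one external fact you need, $\rho(0)=g(\lambda,\mu,\xi,N)$, is indeed available from the paper's own machinery: Eq.~(\ref{lapl_trans_p0}) gives ${\cal L}_\eta[p(0,t)]$, and the Tauberian limit $\rho(0)=\lim_{\eta\to 0}\eta\,{\cal L}_\eta[p(0,t)]$ collapses (since $\eta(1+\xi/\eta)=\eta+\xi$ cancels the prefactor) precisely to the ratio of hypergeometric functions defining $g$ in (\ref{g_1}); your $F'(1)=N[\lambda-\mu+\mu\rho(0)]/(\lambda+\mu+\xi)$ then reproduces the stated mean. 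What your approach buys is transparency and a built-in consistency check (the ODE route and the direct route must agree, which independently confirms $\rho(0)=g$); what the paper's approach buys is that it needs no separate identification of $\rho(0)$, since $g$ appears natively in (\ref{F_asympt}). The only point worth making explicit in a write-up is the justification for passing to the limit $t\to\infty$ in (\ref{eq:diffF}) --- immediate here because the chain is finite and irreducible, so the stationary probabilities satisfy the balance equations obtained by setting the time derivatives to zero.
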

\begin{proof}
The proof is obtained by direct calculations starting from the expression of the probability generating function given in Eq. (\ref{F_asympt}). 
\end{proof}

Subsequently, we focus our analysis on the asymptotic behavior of the mean and variance under the condition that $N$ tends to infinity and $\lambda<\mu$. 
This corresponds to evaluating the average number and the variance of system in the asymptotic regime assuming that 
$N$ is sufficiently large.
\begin{proposition}\label{Prop_lim_mean_var}
If $\lambda<\mu$, the asymptotic mean, given in Proposition \ref{mean_var_asym}, admits the following behavior as $N \rightarrow +\infty$ 
$$
\lim_{N \rightarrow +\infty} E[{\cal N}]=\frac{\lambda}{\mu-\lambda}.
$$
\end{proposition}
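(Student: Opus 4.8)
The plan is to start from the closed form of the asymptotic mean in Proposition \ref{mean_var_asym} and reduce the statement to a first-order asymptotic analysis, as $N\to\infty$, of the two Gauss hypergeometric functions entering the definition (\ref{g_1}) of $g$. First I would rewrite the mean, reading the bracket as $\lambda+\mu(g-1)$, in the form
$$
E[{\cal N}]=\frac{N\mu}{\lambda+\mu+\xi}\left[g(\lambda,\mu,\xi,N)-\Big(1-\tfrac{\lambda}{\mu}\Big)\right],
$$
using $\lambda+\mu(g-1)=\mu\big(g-(1-\lambda/\mu)\big)$. Since the prefactor grows linearly in $N$, the entire difficulty is to show that the bracket vanishes at rate $1/N$ and to pin down the exact coefficient of $1/N$.

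Next I would set $a:=\xi/(\lambda+\mu)$ and $u:=\lambda/\mu$; because $\lambda<\mu$ one has $u\in(0,1)$, which is the crucial structural hypothesis. The decisive observation is that the numerator and denominator of (\ref{g_1}) share the \emph{same} third parameter $a+N+1$ and the same argument $-\lambda/\mu$, and both terminate at order $N$. Writing
$$
\Phi(p):={}_{2}F_{1}\!\left(p,-N,a+N+1;-\tfrac{\lambda}{\mu}\right)=\sum_{k=0}^{N}\frac{(p)_k}{k!}\,u^{k}\,w_k,\qquad w_k:=\prod_{j=0}^{k-1}\frac{N-j}{a+N+1+j},
$$
one has $g=\Phi(a)/\Phi(a+1)$, where the weights satisfy $0<w_k\le1$ with the pointwise expansion $w_k=1-\frac{k(a+k)}{N}+O(1/N^{2})$. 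Because $u<1$, the factor $u^{k}$ forces geometric decay that dominates the polynomial growth of $(p)_k/k!$ and of $k(a+k)$, so each series is controlled by its small-$k$ terms uniformly in $N$.

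The key computation is the two-term expansion $\Phi(p)=(1-u)^{-p}-\frac1N T(p)+O(1/N^{2})$, whose leading term is the binomial series $\sum_k\frac{(p)_k}{k!}u^{k}=(1-u)^{-p}$ and whose correction $T(p)=\sum_k\frac{(p)_k}{k!}u^{k}\,k(a+k)$ I would evaluate by applying the Euler operator $\theta=u\frac{d}{du}$ to $(1-u)^{-p}$; a short calculation gives $T(p)=p\,u\,(1-u)^{-p-2}\big[a+1+u(p-a)\big]$. Forming the ratio then yields at leading order $g\to(1-u)=(\mu-\lambda)/\mu$, so the bracket indeed vanishes, and at first order
$$
g-(1-u)=\frac{(a+1)\,u\,(1+u)}{N\,(1-u)}+O\!\left(\tfrac1{N^{2}}\right).
$$
Substituting this into the rewritten mean, the explicit factor $N$ cancels; after inserting $a+1=(\lambda+\mu+\xi)/(\lambda+\mu)$, $u=\lambda/\mu$, $1\pm u=(\mu\pm\lambda)/\mu$, the common factor $\lambda+\mu+\xi$ cancels as well, leaving precisely $\lambda/(\mu-\lambda)$.

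I expect the main obstacle to be the rigorous justification of the $O(1/N^{2})$ remainder, i.e.\ the interchange of the limit with the summation. One must bound the second-order error of $w_k$ by a polynomial in $k$ uniformly for $0\le k\le N$ and then estimate $\sum_k\frac{(p)_k}{k!}u^{k}\times(\text{poly in }k)\times N^{-2}$; the geometric weight $u^{k}$, available exactly because $\lambda<\mu$, makes every such series convergent and of the asserted order. The delicate part is handling the terms with $k$ comparable to $N$, for which the Taylor expansion of $w_k$ is no longer accurate: there one instead invokes the crude bound $0<w_k\le1$ together with the exponential smallness of $u^{k}$. As a consistency check, the limit $\lambda/(\mu-\lambda)$ is the stationary mean of an $M/M/1$ queue with traffic intensity $\lambda/\mu$, which is heuristically expected since near the origin the birth and death rates scale like $N$ while the catastrophe rate $\xi$ stays fixed, so catastrophes become asymptotically negligible and the level law approaches the geometric distribution $\pi_k\propto(\lambda/\mu)^{k}$.
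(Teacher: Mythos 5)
Your proof is correct --- I checked the key computations ($w_k=1-k(a+k)/N+O(1/N^2)$ pointwise, $T(p)=p\,u\,(1-u)^{-p-2}[a+1+u(p-a)]$ via the Euler operator, the resulting first-order correction $g-(1-u)=\frac{(a+1)u(1+u)}{N(1-u)}+o(1/N)$, and the final cancellation yielding $\lambda/(\mu-\lambda)$) and they all hold --- but your route is genuinely different from the paper's. The paper proves this proposition by invoking Lemma \ref{lemma_appross_g}, which rests on Daalhuis's uniform asymptotic expansion of ${}_2F_1$ with three large parameters: each hypergeometric function in (\ref{g_1}) is expressed through parabolic cylinder functions $D_\nu(-\alpha\sqrt{N})$ with explicit (and lengthy) coefficients $\gamma_{kj}^i$, and then the large-argument expansion of $D_\nu$ converts $g$ into a ratio of degree-$5$ polynomials in $N$, from which the limit of $E[{\cal N}]$ is read off. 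Your argument instead expands the \emph{terminating} series directly, exploiting only that the two functions share the parameter $a+N+1$ and that $u=\lambda/\mu<1$ supplies geometric domination; this is more elementary, self-contained, and transparent about where the hypothesis $\lambda<\mu$ enters, and it isolates exactly the two-term expansion needed for the mean. What the paper's heavier machinery buys is reuse: the degree-$5$ rational approximation of Lemma \ref{lemma_appross_g} is also what drives the limit of the asymptotic \emph{variance} in \ref{app_lim_var}, which requires the expansion of $g$ to higher order than your two terms provide (though your method would extend to that order with more bookkeeping). The only point you would need to write out carefully is the uniformity issue you already flag --- splitting the sum at, say, $k\le N^{1/2}$, using $\log w_k=-k(a+k)/N+O(k^3/N^2)$ on the small-$k$ range and the crude bound $0<w_k\le 1$ together with $u^k\le u^{N^{1/2}}$ on the rest --- and note that for the stated limit you only need a $o(1/N)$ remainder, not $O(1/N^2)$.
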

\begin{proof}
The proof follows from Proposition \ref{mean_var_asym}, by applying the approximation provided in Eq. (\ref{g_approx}) of Lemma \ref{lemma_appross_g}.
\end{proof}
For the case $\lambda < \mu$, the explicit expression of the asymptotic variance as $N \rightarrow +\infty$ has been derived in \ref{app_lim_var}.
Figures \ref{asympt_mean} and \ref{asympt_var} illustrate the expected value and variance of ${\cal N}$ for some values of $\lambda, \mu, \xi$. 
It should be observed that both quantities diverge to $+\infty$ when $\lambda\geq \mu$.

The following result establishes a relationship that represents the asymptotic probabilities in (\ref{asympt_prob_bid}) directly in terms of the marginal 
asymptotic probabilities given in (\ref{rhok}).

 \begin{theorem}
For $\lambda=\mu$, $j \in \{1,\ldots,d\}$, $r=0,\ldots,N$,
\begin{equation}
\rho(r,j)=\rho(r)\cdot \pi_j,
\label{independence}
\end{equation}
where  ${\vec \pi}=(\pi_1,\ldots,\pi_d)$ is the invariant law of the stochastic matrix $C$.
\end{theorem}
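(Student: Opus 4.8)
The plan is to verify that the product measure $\rho(r,j)=\rho(r)\,\pi_j$ satisfies the full set of stationary balance equations for the joint process $({\cal N},{\cal J})$, and then to invoke the uniqueness of the stationary distribution of the irreducible finite continuous-time Markov chain ${\bf X}_t$ to conclude that this product form \emph{is} the stationary law. The stationary balance equations are obtained by setting the left-hand sides of the forward equations (\ref{eq:system}) to zero; the corresponding marginal equations (\ref{sysdiffeq}), which the $\rho(r)$ satisfy by construction, together with the invariance relation $\sum_{l\in D}\pi_l c_{l,j}=\pi_j$ and the normalizations $\sum_j \pi_j=1$, $\sum_{r=0}^N\rho(r)=1$, are the ingredients fed into the verification.

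First I would substitute the ansatz into the equations for the interior levels $k\in\{2,\ldots,N-1\}$ and for the top level $k=N$. At these levels no phase mixing occurs, since for $k\geq 1$ every transition preserves the phase; hence the common factor $\pi_j$ can be pulled out, and what remains inside the bracket is exactly the corresponding stationary marginal equation from (\ref{sysdiffeq}), which vanishes. These equations are therefore satisfied identically for every phase $j$.

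The decisive step is the equation at level $1$, where the incoming current from level $0$ carries the phase-switching term $\sum_{l\in D} c_{l,j}\,\lambda N\,\rho(0,l)$. Inserting $\rho(0,l)=\rho(0)\pi_l$ turns this into $\lambda N\rho(0)\sum_{l\in D}\pi_l c_{l,j}$, and it is precisely the invariance $\pi C=\pi$ that collapses the sum to $\lambda N\rho(0)\,\pi_j$. Once this reduction is made, the level-$1$ equation factors as $\pi_j$ times the stationary marginal equation for $\rho(1)$ and again vanishes. This is the only point where the specific choice $\vec\pi$ (rather than an arbitrary phase distribution) enters, and it is the crux of the argument: the product form is consistent \emph{only} because the phase law is left invariant by the switching matrix at the origin.

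Finally I would treat the level-$0$ equation, whose catastrophe term $\xi\sum_{k=1}^N\rho(k,j)$ sums over all higher levels. Factoring $\pi_j$ and using $\sum_{k=1}^N\rho(k)=1-\rho(0)$ rewrites this term so that the bracket matches the stationary marginal equation for $\rho(0)$ in (\ref{sysdiffeq}), which vanishes; a short check that $\sum_{r=0}^N\sum_{j=1}^d \rho(r)\pi_j=1$ confirms the product is a genuine probability law, and uniqueness of the stationary distribution finishes the proof. I expect the only delicate bookkeeping to lie in the two boundary equations, where the catastrophe sum and the phase-switching sum must be handled, while the interior levels are immediate; notably, the verification uses only the block structure of the generator, not the relation $\lambda=\mu$. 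A purely probabilistic alternative is also available: the phase observed at successive departures from level $0$ evolves as a Markov chain with transition matrix $C$ and hence converges to $\pi$, while the level excursions are governed by rates independent of the phase, yielding asymptotic independence of level and phase.
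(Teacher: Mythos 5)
Your proposal is correct, and it takes a genuinely different route from the paper. The paper's proof works at the level of generating functions: it writes the stationary ODE satisfied by the phase-specific pgf $G_j(z)=\lim_{t\to\infty}G_j(z,t)$, solves it, and reads off from the series expansion that $\rho(k,j)=\rho(k)\,\omega_j$ with $\omega_j=\sum_k\rho(k,j)$; only afterwards does it use the level-$0$ balance relation $\rho(0,j)=\sum_l c_{l,j}\rho(0,l)$ to identify $\omega_j$ with $\pi_j$. You instead guess the product form and verify it against the global balance equations obtained from (\ref{eq:system}), isolating correctly the two places where anything nontrivial happens: the level-$1$ equation, where $\sum_l \pi_l c_{l,j}=\pi_j$ is exactly what is needed to collapse the phase-switching sum, and the level-$0$ equation, where the catastrophe term is handled via $\sum_{k\ge1}\rho(k)=1-\rho(0)$; uniqueness of the stationary law of the finite irreducible chain then closes the argument. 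Your approach is more elementary (no pgf, no series extraction), makes the role of $\pi C=\pi$ completely transparent, and --- as you observe --- does not use $\lambda=\mu$ anywhere, so it actually proves the factorization for general $\lambda,\mu>0$, which is stronger than the statement as given; the paper's route has the advantage of deriving rather than postulating the product form and of fitting into the pgf machinery used throughout the paper. Two minor caveats: you should say explicitly why the full chain is irreducible (all phases communicate through level $0$ because $C$ is irreducible, and every state reaches level $0$), since uniqueness of the stationary distribution is the load-bearing step; and the closing ``purely probabilistic alternative'' about phases at successive departures from level $0$ is only a heuristic as stated --- asymptotic independence of level and phase would need an argument about the excursion structure, not just convergence of the embedded phase chain.
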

\begin{proof}
Since Eq. (\ref{balance}) can be written as 
\begin{equation}
\rho(0,j)=\sum_{l=1}^d c_{l,j} \rho(0,l),
\label{balance2}
\end{equation}
we obtain that the asymptotic probability generating function $G_j(z)$, defined in (\ref{G_j}), satisfies the following differential equation
\begin{equation*}
\lambda (1-z)(1+z)\frac{d}{dz}G_j(z)+\left[\frac{N}{z}(1-z)(\lambda-\lambda z)-\xi\right]G_j(z)+\xi \omega_j-\lambda \frac{N}{z}(1-z) \rho(0,j)=0,
\end{equation*}
with $\omega_j:= \sum_{k=0}^N \rho(k,j)$.
By solving this equation and using the series expansion of the solution, for all $k=0,1,\ldots,N$, we obtain
$$
\rho(k,j)=\rho(k) \cdot \omega_j.
$$
However, from Eq. (\ref{balance2}), we have that 
$$
\rho(0,j)=\rho(0)\cdot \pi_j,
$$
where$\vec \pi=(\pi_1,\ldots,\pi_d)$ is the invariant law of the stochastic matrix $C$, so that the proof immediately follows.
\end{proof}

Eq. (\ref{independence}) demonstrates that, in the asymptotic regime, the phase ${\cal J}$, within a given level $k$, becomes conditionally independent of the specific level $k$.
Consequently, the long-term behavior of the QBD process exhibits a decoupling phenomenon: the r.v.'s ${\cal N}$ and ${\cal J}$ become unconditionally independent, with the phase distribution being governed by the invariant law of the stochastic matrix $C$.

\section{The diffusion approximation}\label{section:approximation}
Hereafter, we present the construction of a diffusion approximation for the process ${\bf X}_t$. We identify the zero states of all phases with a single common state $0$, 
so that the process evolves over a star-shaped graph consisting of $d$ half lines joined at the origin.
\par
Let us consider a parameterization of the model by setting
\begin{equation}
 \lambda=\frac{\alpha}{2}+\frac{\gamma}{2}\epsilon,
 \qquad
 \mu=\frac{\alpha}{2}-\frac{\gamma}{2}\epsilon,
 \label{eq:parametri}
\end{equation}
with $\gamma>0$, $\alpha>0$ and $\epsilon>0$.
\par
For all $t>0$, let us consider the scaled process ${\cal N}^*_{\epsilon}(t)={\cal N}(t)\,\epsilon$, so that 
${{\bf X}_t^*}_{\!\epsilon}:=\{({\cal N}^*_{\epsilon}(t), J(t));\;t\geq 0\}$ 
is a continuous-time stochastic process having state space
$S_{\epsilon}^*=\{0\}\cup (\textbf{N}_{\epsilon}\times D)$, 
where $\textbf{N}_{\epsilon}=\{\epsilon,2 \epsilon,3 \epsilon,\ldots,n \epsilon\}$. 
The transient probabilities, for $t\geq 0$, $k\in \textbf{N}$, $j,\,l \in D$, are given by 
\begin{eqnarray*}
 p^*_{\epsilon}(0,l,t) \!\!  &:=& \!\!  {\mathbb P}\left\{({\cal N}^*_{\epsilon}(t),J(t))=(0,l)|({\cal N}(0),J(0))=(0,l_0) \right\}, \\
p^*_{\epsilon}(k,j,t) \!\!  &:=& \!\!  {\mathbb P}\left\{({\cal N}^*_{\epsilon}(t),J(t))=(k \epsilon,j)|({\cal N}(0),J(0))=(0,l_0) \right\}.  
\end{eqnarray*}
Being ${\cal N}^*_{\epsilon}(t)={\cal N}(t)\,\epsilon$, we have
$p^*_{\epsilon}(0,l,t)=p(0,l,t)$ and $p^*_{\epsilon}(k,j,t)=p(k,j,t)$. 
\par
In the limit as $\epsilon\to 0^+$, the scaled process ${{\bf X}_t^*}_{\!\epsilon}$ is shown to converge
weakly to a diffusion process ${\cal X}:=\{({\cal Z}(t),J(t));\;t\geq 0\}$, whose state space is the star graph
$S_{\cal X}:=\{0\}\cup\left(\mathbb{R}^+\times D\right)=\displaystyle{\cup_{j=1}^d S_j}$, where each ray $S_j$ denotes 
the positive half-line.
\par
When $\epsilon$ tends to $0$, then the above probabilities $p^*_{\epsilon}(0,l,t)$ and $p^*_{\epsilon}(k,j,t)$ tend respectively to 
$$
 {\mathbb P}\{0\leq {\cal Z}(t)<\epsilon, J(t)=l\,|\,({\cal Z}(0), J(0))=(0,l_0)\}:=f(0,l,t)\epsilon+o(\epsilon), 
$$ 
$$
 {\mathbb P}\{x\leq {\cal Z}(t)<x+\epsilon, J(t)=j\,|\,({\cal Z}(0), J(0))=(0,l_0)\}:=f(x,j,t)\epsilon+o(\epsilon),
$$ 
with $t\geq 0$, $x=k\epsilon \in \mathbb{R}^+$, and $j,l \in D$. 
Hence, $f(0,l,t)$ and $f(x,j,t)$ denote the probability density of the process ${\cal Z}(t)$ at time $t$ in the state $0$ of the $l$-ray and in 
the state $x$ of the $j$-ray, respectively.
\begin{proposition}\label{prop:diffj}
Under the limit conditions
$$
\epsilon \rightarrow 0^+,\;N \rightarrow +\infty,\;N\epsilon \rightarrow +\infty,\;N\epsilon^2 \rightarrow \nu>0,
$$
for $x\in \mathbb{R}^+$, $t\geq 0$ and $j\in D$, the density $f(x,j,t)$ satisfies the 
following partial differential equation
\begin{equation}
 {\partial\over\partial t}\;f(x,j,t)=-{\partial\over\partial x}\;
 \Bigl\{[-\alpha(x-\beta)]\,
 f(x,j,t)\Bigr\}
 +{1\over 2}\,\sigma^2{\partial^2\over\partial x^2}f(x,j,t)-\xi f(x,j,t),
 \label{eq:equdiff}
\end{equation}
with the initial condition 
$$
 f(x,j,0)=\delta(x)\delta_{j, l_0}, \qquad x\in \{0\}\cup \mathbb{R}^+, \;\; j\in D,
 $$
where $\delta(x)$ is the delta-Dirac function, and the boundary condition
\begin{equation}
  \sum_{l \in D} \left\{\left.\alpha \beta f(0,l,t)-\frac{\sigma^2}{2}\frac{\partial}{\partial x}f(x,l,t)\right|_{x=0} \right\}-\xi=0,
  \label{eq:rifless}
\end{equation}
with 
\begin{equation}
 \sigma^2=\alpha \nu , \qquad \beta=\frac{\gamma \nu}{\alpha}.
  \label{eq:parsigbet}
  \end{equation}
Moreover, the following relations hold
\begin{equation}
f(0,j,t)=\sum_{l \in D} c_{l,j} f(0,l,t),\qquad \forall j \in D,
\label{cond_approx_diff1}
\end{equation}
\begin{equation}
\lim_{x \rightarrow +\infty} f(x,j,t)=0,\qquad \forall j \in D.
\label{cond_approx_diff2}
\end{equation}
\end{proposition}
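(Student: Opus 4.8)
The plan is to start from the Chapman--Kolmogorov forward system (\ref{eq:system}), insert the parametrization (\ref{eq:parametri}) and the identification $p(k,j,t)=p^*_\epsilon(k,j,t)\approx f(k\epsilon,j,t)\,\epsilon$, and then carry out a Kramers--Moyal (Taylor) expansion in the interior, treating the equations at the levels $k=0$ and $k=1$ separately to extract the boundary relations. Writing $x=k\epsilon$ and expanding $f(x\pm\epsilon,j,t)=f\pm\epsilon\,\partial_x f+\tfrac{\epsilon^2}{2}\partial_{xx}f+O(\epsilon^3)$ in the generic interior equation ($2\le k\le N-1$), the incoming/outgoing coefficients are $A=\mu(N+k+1)$, $B=\lambda(N-k+1)$ and $C=\lambda(N-k)+\mu(N+k)+\xi$. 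After dividing by $\epsilon$, the limiting equation is organized by the combinations $A+B-C$, $(A-B)\epsilon$ and $(A+B)\tfrac{\epsilon^2}{2}$.

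The main computation is to show each combination converges to the correct coefficient. Using $\lambda+\mu=\alpha$ and $\mu-\lambda=-\gamma\epsilon$ one finds $A+B-C=\alpha-\xi$, which matches the zeroth-order part $\alpha f-\xi f$ produced by expanding the right-hand side of (\ref{eq:equdiff}) via the product rule. Next, $(A-B)\epsilon=-\gamma\,(N\epsilon^2)+\alpha x-\gamma\epsilon^2\to\alpha x-\gamma\nu=\alpha(x-\beta)$ by (\ref{eq:parsigbet}), giving the drift coefficient of $\partial_x f$; and $(A+B)\tfrac{\epsilon^2}{2}=\tfrac12(\alpha N\epsilon^2-\gamma x\epsilon^2+\alpha\epsilon^2)\to\tfrac12\alpha\nu=\tfrac12\sigma^2$, giving the diffusion term. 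I would then check that the neglected terms vanish: the third-order term is $O(\alpha x\epsilon^2)+O(\gamma N\epsilon^4)=o(1)$ and the fourth-order term is $O(\alpha N\epsilon^4)=O(\epsilon^2\nu)=o(1)$, so all contributions beyond second order disappear under $N\epsilon^2\to\nu$. This assembles (\ref{eq:equdiff}) for $x>0$.

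For the boundary conditions I would analyze the low-level equations. In the $k=0$ equation of (\ref{eq:system}), the left-hand side is $\epsilon\,\partial_t f(0,j,t)\to0$, while the difference $\mu(N+1)\,\epsilon f(\epsilon,j,t)-\lambda N\,\epsilon f(0,j,t)$ expands to $-\gamma\nu\,f(0,j,t)+\tfrac12\alpha\nu\,\partial_x f(0,j,t)+o(1)=-\alpha\beta f(0,j,t)+\tfrac{\sigma^2}{2}\partial_x f(0,j,t)+o(1)$, and the catastrophe term obeys $\xi\sum_{k=1}^{N}p(k,j,t)\to\xi\,\omega_j(t)$ with $\omega_j(t)=\int_0^{\infty}f(x,j,t)\,{\rm d}x$. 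Passing to the limit gives the per-phase flux identity $\alpha\beta f(0,j,t)-\tfrac{\sigma^2}{2}\partial_x f(0,j,t)=\xi\,\omega_j(t)$; summing over $j$ and using that the mass at the origin is $\sum_j p(0,j,t)=\sum_j f(0,j,t)\,\epsilon\to0$, so that $\sum_j\omega_j(t)\to1$, yields exactly (\ref{eq:rifless}). The phase-mixing relation (\ref{cond_approx_diff1}) comes from the $k=1$ equation, whose three $O(N\epsilon)$ terms must balance at leading order: dividing by $\tfrac{\alpha}{2}N\epsilon$ and letting $\epsilon\to0$ collapses $f(\epsilon,j,t),f(2\epsilon,j,t)\to f(0,j,t)$ and leaves $f(0,j,t)=\sum_{l}c_{l,j}f(0,l,t)$. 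Finally, the initial condition follows from (\ref{probiniz1})--(\ref{probiniz2}), since the unit mass initially at $(0,l_0)$ concentrates into $\delta(x)\delta_{j,l_0}$, and (\ref{cond_approx_diff2}) follows from integrability together with the right endpoint $x=N\epsilon\to+\infty$.

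The step I expect to be the genuine obstacle is the boundary analysis near the origin, where the large $O(1/\epsilon)$ rates must cancel and one must justify passing to the limit both in the $\epsilon\,\partial_t f$ term and in the sum $\xi\sum_{k}p(k,j,t)$; in particular the claim $\sum_j\omega_j\to1$ (equivalently, that the atom at the origin is asymptotically negligible) is what converts the phase-wise balance into the stated reflection condition with the constant $\xi$. A fully rigorous version would also require the tightness/weak-convergence argument legitimizing the replacement of the discrete probabilities by $f\,\epsilon$ uniformly enough to take these limits, which is the technical heart behind the formal expansion.
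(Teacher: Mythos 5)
Your proposal is correct and follows essentially the same route as the paper: a Taylor (Kramers--Moyal) expansion of the scaled forward equations under the parametrization (\ref{eq:parametri}), with the combinations $A+B-C\to\alpha-\xi$, $(A-B)\epsilon\to\alpha(x-\beta)$ and $(A+B)\epsilon^{2}/2\to\sigma^{2}/2$ matching the paper's interior computation exactly. The boundary analysis at levels $k=0$, $k=1$ and $k=N$, which the paper only asserts as following ``with the analogous procedure,'' is carried out correctly in your version (the per-phase flux identity summed over $j$ with $\sum_{j}\omega_{j}=1$ gives (\ref{eq:rifless}), and the leading-order balance of the $O(N\epsilon)$ terms at $k=1$ gives (\ref{cond_approx_diff1})), and you rightly flag the negligibility of the atom at the origin and the tightness argument as the points a fully rigorous treatment would need.
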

\begin{proof}
Since $p^*_{\epsilon}(k,j,t)\approx f(k \epsilon,j,t)\, \epsilon$, in analogy with Eqs. (\ref{eq:system}), for $x=k\epsilon$ with $k=2,3\ldots N-1$, $j\in D$ and $t\geq 0$ we have
\begin{eqnarray}
&&  \hspace{-0.8cm}
{\partial \over \partial t}\;\sum_{l\in D}f(0,l, t) \cdot \epsilon=\mu \frac{N_\epsilon+\epsilon}{\epsilon}\sum_{l\in D} f(\epsilon,l,t)\cdot \epsilon-\left[\lambda \frac{N_\epsilon}{\epsilon}+\frac{\xi \epsilon}{\epsilon}\right]\,\sum_{l\in D}f(0,l,t) \cdot \epsilon+\frac{\xi \epsilon}{\epsilon},\label{eq:system_diff_approx1}
\\
&&  \hspace{-0.8cm}
{\partial \over \partial t}\;f(\epsilon,j, t)\cdot \epsilon =2 \mu ({N_\epsilon+2 \epsilon})\,\frac{f(2\epsilon,j,t)}{2 \epsilon}\cdot \epsilon+ \lambda \frac{N_\epsilon}{\epsilon} 
\sum_{l \in D}c_{l,j}\, f(0,l,t)\cdot \epsilon \nonumber \\
 &&  \hspace{1.4cm}  - [\alpha N_\epsilon-(\gamma \epsilon)\epsilon +\xi \epsilon] \frac{f(\epsilon,j,t)}{\epsilon}
 \cdot \epsilon,\label{eq:system_diff_approx2}
\\
&&  \hspace{-0.8cm}
\frac{\partial}{\partial t}f(x,j,t)\cdot \epsilon=\mu \frac{N_\epsilon+x+\epsilon}{\epsilon}f(x+\epsilon,j,t)\cdot \epsilon+\lambda \frac{N_\epsilon-x+\epsilon}{\epsilon}f(x-\epsilon,j,t)\cdot \epsilon
 \nonumber\\
 &&  \hspace{1.4cm}  - \frac{(\lambda+\mu)N_\epsilon-(\lambda-\mu)x}{\epsilon}f(x,j,t)\cdot \epsilon-\frac{\xi \epsilon}{\epsilon}f(x,j,t)\cdot \epsilon,\label{eq:system_diff_approx3}
\\
&&  \hspace{-0.8cm}
{\partial \over \partial t}\;f(N_\epsilon,j, t)\cdot \epsilon=\lambda\,f(N_\epsilon-\epsilon,j, t)\cdot \epsilon- 
{\left[\left(\frac{\alpha}{2}-\frac{\gamma}{2}\epsilon\right)\frac{2 N_\epsilon}{\epsilon}+\xi\right]}
\,f(N_\epsilon,j, t) \cdot \epsilon.\label{eq:system_diff_approx4}
\end{eqnarray}
where $N_\epsilon=N\cdot \epsilon$. 
Expanding $f$ as Taylor series in Eq. (\ref{eq:system_diff_approx3}),  we obtain 
\begin{eqnarray}
&& \hspace{-1cm}
\frac{\partial}{\partial t}f(x,j,t)=(\mu+\lambda-\xi)f(x,j,t)+\left[(\mu-\lambda)N_\epsilon+(\mu+\lambda)x+(\mu-\lambda)\epsilon\right]\frac{\partial}{\partial x}f(x,j,t)
 \nonumber \\
 &&  \hspace{1.4cm}  + \frac{\epsilon}{2}\left[(\mu+\lambda)N_\epsilon+(\mu-\lambda)x+(\mu+\lambda)\epsilon\right]\frac{\partial^2}{\partial x^2}f(x,j,t)+o(\epsilon^2).
\nonumber
\end{eqnarray}
Due to (\ref{eq:parametri}), one has
\begin{eqnarray}
&& \hspace{-1cm}
\frac{\partial}{\partial t}f(x,j,t)=(\alpha-\xi) f(x,j,t)+\left[-\gamma \epsilon N_\epsilon+\alpha x-\gamma \epsilon^2\right]\frac{\partial}{\partial x}f(x,j,t)
 \nonumber \\
 &&  \hspace{1.4cm}  + \frac{\epsilon}{2}\left[\alpha N_\epsilon-\gamma \epsilon x+\alpha \epsilon\right]\frac{\partial^2}{\partial x^2}f(x,j,t)+o(\epsilon^2).
\nonumber
\end{eqnarray}
Hence, by letting 
$$
 \epsilon\rightarrow0^+, \qquad 
 N\rightarrow +\infty, \qquad 
 N \epsilon=N_\epsilon \rightarrow+\infty, \qquad 
 N \epsilon^2=N_\epsilon \epsilon \rightarrow \nu>0, 
$$
the previous partial differential equation becomes
$$
\frac{\partial}{\partial t}f(x,j,t)=(\alpha-\xi) f(x,j,t)+\left[-\gamma \nu+\alpha x\right]\frac{\partial}{\partial x}f(x,j,t)+\frac{\alpha \nu}{2} \frac{\partial^2}{\partial x^2}f(x,j,t),
$$
that coincides with (\ref{eq:equdiff}) due to (\ref{eq:parsigbet}). 
With the analogous procedure, the conditions (\ref{eq:rifless}), (\ref{cond_approx_diff1}) and (\ref{cond_approx_diff2}) follow
from Eqs. (\ref{eq:system_diff_approx1}) , (\ref{eq:system_diff_approx2}) and (\ref{eq:system_diff_approx4}).
\end{proof}
\par
Due to Proposition \ref{prop:diffj}, the diffusion approximation of ${\bf X}_t$ leads to $d$ jump diffusion processes, each defined on a ray of the star graph. 

%
\par
Let us now introduce the function
\begin{equation}
 h(x,t):=\frac{\partial}{\partial x} P({\cal Z}_t\leq x\,|\,{\cal Z}_0=0)=\sum_{j=1}^d f(x,j,t),\qquad x\in \mathbb{R}^+,\quad t\geq 0.
  \label{eq:definhxt}
\end{equation}
\begin{proposition}
For $x\in \mathbb{R}^+$ and $t\geq 0$, the transition density (\ref{eq:definhxt}) satisfies the following partial 
differential equation:
\begin{equation}
 {\partial\over\partial t}\;h(x,t)=-\xi h(x,t)-{\partial\over\partial x}\;
 \Bigl\{-\alpha(x-\beta)\,
 h(x,t)\Bigr\}
 +{1\over 2}\,\sigma^2\,{\partial^2\over\partial x^2}h(x,t),
 \label{eq:equdiffsomma}
\end{equation}
with conditions
\begin{equation}
  \left.-\xi+\alpha \beta h(0,t)-\frac{\sigma^2}{2}\frac{\partial}{\partial x}h(x,t)\right|_{x=0}=0,
  \label{eq:equdiffbound1}
\end{equation}
\begin{equation}
 \lim_{x\rightarrow +\infty} h(x,t)=0,
  \label{eq:equdiffbound2}
\end{equation}
and Dirac-delta initial condition
\begin{equation*}
 \lim_{t\to 0^+}h(x,t)=\delta(x).
  \label{eq:initcond}
\end{equation*}
\end{proposition}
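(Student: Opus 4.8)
The plan is to obtain every assertion by summing the corresponding results of Proposition \ref{prop:diffj} over all phases $j\in D$, exploiting the fact that the governing equation (\ref{eq:equdiff}) is linear in $f(x,j,t)$ and that its coefficients $\alpha$, $\beta$, $\sigma^2$ and $\xi$ do not depend on the phase. By the very definition (\ref{eq:definhxt}) one has $h(x,t)=\sum_{j=1}^d f(x,j,t)$, and since $d$ is finite all the partial differential operators in $t$ and $x$ commute with this summation.

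First I would derive the PDE (\ref{eq:equdiffsomma}). Summing Eq.\ (\ref{eq:equdiff}) over $j=1,\ldots,d$ and interchanging the finite sum with the derivatives gives directly
$$
{\partial\over\partial t}h(x,t)=-{\partial\over\partial x}\bigl\{-\alpha(x-\beta)h(x,t)\bigr\}+{1\over2}\sigma^2{\partial^2\over\partial x^2}h(x,t)-\xi h(x,t),
$$
which is exactly (\ref{eq:equdiffsomma}). Next I would handle the conditions. The reflecting condition (\ref{eq:rifless}) is already written as a sum over $l\in D$; recognizing $\sum_{l\in D}f(0,l,t)=h(0,t)$ and $\sum_{l\in D}\frac{\partial}{\partial x}f(x,l,t)\big|_{x=0}=\frac{\partial}{\partial x}h(x,t)\big|_{x=0}$ collapses it into (\ref{eq:equdiffbound1}). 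For the far-field behaviour I would sum (\ref{cond_approx_diff2}) over the $d$ phases to get $\lim_{x\to+\infty}h(x,t)=0$, i.e.\ (\ref{eq:equdiffbound2}). Finally, summing the initial condition $f(x,j,0)=\delta(x)\delta_{j,l_0}$ over $j$ yields $h(x,0)=\delta(x)\sum_{j=1}^d\delta_{j,l_0}=\delta(x)$, since exactly one Kronecker delta is nonzero; passing to the limit $t\to0^+$ gives the Dirac-delta initial condition.

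Since every step is a plain consequence of linearity and the finiteness of $D$, I do not expect any genuine analytic obstacle here; the work is purely bookkeeping. The only point worth an explicit remark is that the interface relation (\ref{cond_approx_diff1}) is \emph{not} required for this statement: summing it over $j$ and using the stochasticity of $C$, namely $\sum_{j\in D}c_{l,j}=1$, merely reproduces the tautology $h(0,t)=h(0,t)$, so it is automatically consistent and places no additional constraint on $h$. I would therefore state the proof as ``sum the assertions of Proposition \ref{prop:diffj} over $j\in D$'' and verify each of the four items in turn as above.
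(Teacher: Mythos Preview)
Your proposal is correct and follows exactly the approach the paper takes: the paper's own proof simply states that the result ``follows immediately from Proposition \ref{prop:diffj}, and recalling assumption (\ref{eq:definhxt}),'' which is precisely your strategy of summing each assertion over $j\in D$. Your write-up is in fact more explicit than the paper's, and your side remark that condition (\ref{cond_approx_diff1}) collapses to a tautology upon summation (and hence is not needed here) is a valid and useful observation.
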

\begin{proof}
The proof of Eqs.\ (\ref{eq:equdiffsomma}) $\div$ (\ref{eq:equdiffbound2})
follows immediately from Proposition \ref{prop:diffj}, and recalling assumption (\ref{eq:definhxt}).
\end{proof}
Note that Eq.\ $(\ref{eq:equdiffsomma})$ is the Fokker-Planck equation for a  
jump Ornstein-Uhlenbeck diffusion process on $\mathbb{R}^+$ with drift $A_1(x)=-\alpha(x-\beta)$, infinitesimal variance $A_2(x)=\sigma^2$ 
and with a reflecting boundary in $x=0$ (see Eq. (\ref{eq:equdiffbound1})). We recall \cite{Giorno2012} for the study of a reflected OU process subject to catastrophes that occur at exponential rate 
reducing the state of the system instantaneously to zero.
\par
Let us denote by $\tilde{\cal Z}(t)$ an Ornstein-Uhlenbeck diffusion process on $\mathbb{R}^+$ with drift $A_1(x)=-\alpha(x-\beta)$, infinitesimal variance $A_2(x)=\sigma^2$ 
and with a reflecting boundary in $x=0$ and let us set 
$$
{\tilde r}(x,t|0):=\frac{{\rm d} }{{\rm d} x} {\mathbb P}(\tilde{\cal Z}(t)\leq x),
$$
its probability density function. 
It is well know (see, for instance, \cite{Giorno2012}) that, for $t \geq 0$, $x \geq 0$,  the processes $\tilde{\cal Z}(t)$ and ${\cal Z}(t)$ are linked 
by the following relations:
\begin{eqnarray}
  &&h(x,t)=e^{-\xi t}{\tilde r}(x,t|0)+\xi \int_0^t e^{-\xi \tau}{\tilde r}(x,\tau|0) {\rm d}\tau, 
  \label{rel_pdf_1} \\
  &&E[{\cal Z}^r(t)]=e^{-\xi t}E[{\tilde {\cal Z}}^r(t)]+\xi \int_0^t e^{-\xi \tau}E[{\tilde {\cal Z}}^r(\tau)] {\rm d}\tau, \nonumber
\end{eqnarray}
where $E[{\tilde {\cal Z}}^r(t)]$ ($E[{ {\cal Z}}^r(t)]$) is the $r-$th moment of ${\tilde {\cal Z}}(t)$ (${ {\cal Z}}(t)$).
\par
Making use of Eq. (\ref{rel_pdf_1}), in the following proposition we obtain the explicit expression of $h(x,t)$ in the special case $\lambda=\mu$.

\begin{proposition}
In the case $\beta=0$, for $t>0$ and $x\in {\mathbb R}^{+}$, the probability density function of ${\cal Z}(t)$ is given by
\begin{eqnarray}
&& \hspace*{-1.5cm}
h(x,t)=2{\rm e}^{-\xi t} \sqrt{\frac{\alpha}{\pi \sigma^2(1-{\rm e}^{-2 \alpha t})}} \exp\left\{-\frac{\alpha x^2}{\sigma^2(1-{\rm e}^{-2\alpha t})}\right\}
\nonumber
\\
&& \hspace*{-0.5cm}
+ \frac{x \xi}{\sqrt{\pi} \sigma^2} \sum_{n=0}^{+\infty} \frac{\left(1-\frac{\xi}{2 \alpha}\right)_n}{n!} \left(\frac{x^2 \alpha}{\sigma^2}  \right)^n
\Gamma\left[-\frac{1}{2}-n, \frac{x^2 \alpha (1+ \coth(t \alpha)}{2 \sigma^2} \right],\nonumber
\end{eqnarray}
where $\Gamma[a,z]$ denotes the upper incomplete gamma function.
\end{proposition}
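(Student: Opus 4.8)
The plan is to exploit the additive relation (\ref{rel_pdf_1}), which writes $h(x,t)$ as the sum of $e^{-\xi t}\tilde r(x,t\,|\,0)$ and the catastrophe--correction term $\xi\int_0^t e^{-\xi\tau}\tilde r(x,\tau\,|\,0)\,{\rm d}\tau$, where $\tilde r(x,t\,|\,0)$ is the transition density of the reflected Ornstein--Uhlenbeck process $\tilde{\cal Z}(t)$ without catastrophes. The whole computation then splits into two independent tasks: (a) obtaining $\tilde r(x,t\,|\,0)$ in closed form when $\beta=0$, which immediately produces the first summand of the claimed formula; and (b) evaluating the correction integral, which must reproduce the series of upper incomplete gamma functions.

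For task (a), recall that $\tilde{\cal Z}(t)$ has drift $-\alpha(x-\beta)=-\alpha x$ (since $\beta=0$) and infinitesimal variance $\sigma^2$, and is reflected at the origin. Because the drift is odd about $0$ and the reflecting barrier sits exactly at the mean--reversion point, the method of images applies exactly: the reflected density equals the free OU density started at $0$ plus its mirror image, and by the symmetry of the free law about $0$ this is simply twice the free density on $\mathbb{R}^+$ (see, for instance, \cite{Giorno2012}). Since the free OU started at $0$ is centered Gaussian with variance $\frac{\sigma^2}{2\alpha}(1-e^{-2\alpha t})$, one gets
$$
\tilde r(x,t\,|\,0)=2\sqrt{\frac{\alpha}{\pi\sigma^2(1-e^{-2\alpha t})}}\exp\left\{-\frac{\alpha x^2}{\sigma^2(1-e^{-2\alpha t})}\right\},\qquad x\ge 0,
$$
and multiplying by $e^{-\xi t}$ yields exactly the first term of $h(x,t)$.

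For task (b), I would substitute $w=1-e^{-2\alpha\tau}$ in the correction integral, using $e^{-\xi\tau}=(1-w)^{\xi/(2\alpha)}$ and ${\rm d}\tau={\rm d}w/[2\alpha(1-w)]$, and then apply the further change of variable $s=\alpha x^2/(\sigma^2 w)$. A short computation turns the correction term into
$$
\frac{\xi x}{\sqrt\pi\,\sigma^2}\int_{z}^{+\infty}\left(1-\frac{\alpha x^2}{\sigma^2 s}\right)^{\frac{\xi}{2\alpha}-1}s^{-3/2}e^{-s}\,{\rm d}s,\qquad z:=\frac{\alpha x^2}{\sigma^2(1-e^{-2\alpha t})}.
$$
Since $1+\coth(t\alpha)=2/(1-e^{-2\alpha t})$, the lower limit $z$ coincides with $\frac{\alpha x^2(1+\coth(t\alpha))}{2\sigma^2}$ appearing in the statement. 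Expanding the binomial factor as $\sum_{n\ge0}\frac{(1-\frac{\xi}{2\alpha})_n}{n!}\left(\frac{\alpha x^2}{\sigma^2}\right)^n s^{-n}$ and integrating term by term, each summand produces $\int_z^{+\infty}s^{-n-3/2}e^{-s}\,{\rm d}s=\Gamma\!\left[-\tfrac12-n,z\right]$, which assembles precisely into the claimed series with Pochhammer weights $(1-\tfrac{\xi}{2\alpha})_n/n!$.

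The main obstacle is the evaluation of the correction integral in task (b): one must choose the two successive substitutions so that the factor $e^{-\xi\tau}$ becomes the binomial weight $(1-w)^{\xi/(2\alpha)-1}$ and the Gaussian exponent becomes the variable of an upper incomplete gamma function. The minor technical point to settle is the justification of the term--by--term integration, namely the convergence of the binomial series for $(1-\alpha x^2/(\sigma^2 s))^{\xi/(2\alpha)-1}$ on the integration range $s\in[z,+\infty)$, equivalently $w\in[0,1-e^{-2\alpha t}]$ where $\alpha x^2/(\sigma^2 s)=w<1$; this legitimizes interchanging sum and integral and identifies the coefficients as the Pochhammer symbols $(1-\tfrac{\xi}{2\alpha})_n/n!$.
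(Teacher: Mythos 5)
Your proposal is correct and follows essentially the same route as the paper: both start from the decomposition (\ref{rel_pdf_1}), use the known closed form of the reflected OU density $\tilde r(x,t|0)$ for $\beta=0$ (which you re-derive by the image method rather than citing \cite{Giorno2023}), and reduce the catastrophe-correction integral to the incomplete-gamma series via the binomial identity $\sum_{n\ge 0}\frac{(1-\xi/(2\alpha))_n}{n!}z^{-n}=\bigl(\frac{z-1}{z}\bigr)^{\xi/(2\alpha)-1}$, which is exactly the identity the paper invokes. Your substitutions $w=1-e^{-2\alpha\tau}$, $s=\alpha x^2/(\sigma^2 w)$ and the resulting prefactor $\xi x/(\sqrt{\pi}\sigma^2)$ and lower limit $z=\frac{\alpha x^2(1+\coth(\alpha t))}{2\sigma^2}$ all check out.
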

\begin{proof}
Under the assumptions $\beta=0$, the expression of ${\tilde r}(x,t|0)$ is well known (see, for instance, \cite{Giorno2023}) as
\begin{equation*}
{\tilde r}(x,t|0)=2\sqrt{\frac{\alpha}{\pi \sigma^2(1-e^{-2 \alpha t})}}\exp\left\{-\frac{\alpha x^2}{\sigma^2(1-e^{-2\alpha t})}\right\}.
\end{equation*} 
Hence, the proof follows from Eq. (\ref{rel_pdf_1}) and noting that 
$$
\sum_{n=0}^{+\infty} \frac{\left(1-\frac{\xi}{2 \alpha}\right)_n}{n!} z^{-n}=\left(\frac{z-1}{z}\right)^{\frac{\xi}{2 \alpha}-1}.
$$
\end{proof}
\begin{remark}
In the special case $\beta=0$, if $\frac{\xi}{2 \alpha}=m$, $m\in {\mathbb N}$, it results 
\begin{eqnarray*}
&& \hspace*{-1.7cm}
h(x,t)=\exp\left\{-2 m t \alpha-\frac{\alpha x^2 [1+\coth (t \alpha)]}{2\sigma^2}\right\}
\frac{\sqrt{2 \alpha [1+\coth (t \alpha)]}}{\sqrt{\pi} \sigma}
\nonumber
\\
&& \hspace*{-0.5cm}
+ \frac{2 m x \alpha}{\sqrt{\pi} \sigma^2} \sum_{n=0}^{m-1} {m-1 \choose n}
 \left(-\frac{x^2 \alpha}{\sigma^2}  \right)^n
\Gamma\left[-\frac{1}{2}-n, \frac{x^2 \alpha (1+ \coth(t \alpha)}{2 \sigma^2} \right].
\end{eqnarray*}
\end{remark}

\begin{figure}[t] 
\begin{center}
\hspace*{-0.4cm}
\includegraphics[height=4.5cm,width=6.5cm]{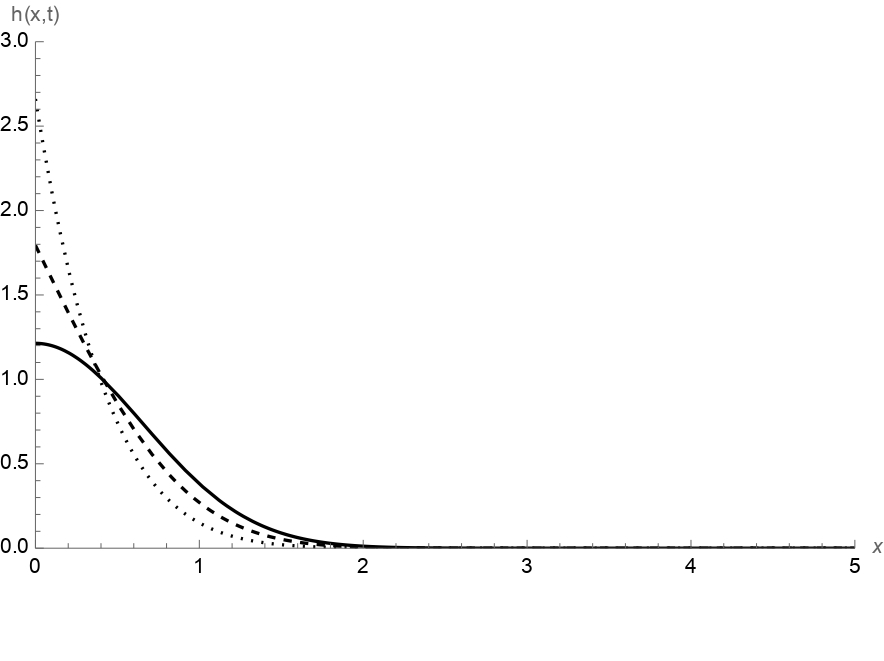}
\includegraphics[height=4.5cm,width=6.5cm]{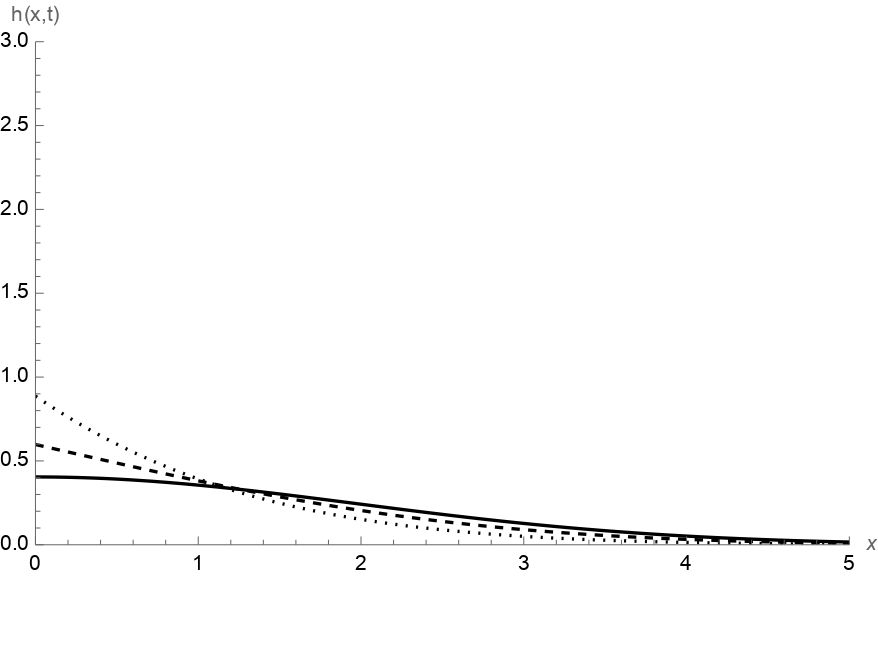}
\end{center}
\caption{Plots of $h(x,t)$ for $\beta=0$, $\alpha=1$, $t=1$, $\xi=0$ (solid line), $\xi=1$ (dashed line) and $\xi=3$ (dotted line), with $\sigma=1$ (left-hand side)
and $\sigma=3$ (right-hand side).}
\label{hbeta0}
\end{figure}

Figure \ref{hbeta0} presents some plots of $h(x,t)$ in the case $\lambda=\mu$ for different values of $\xi$ and $\sigma$. 
We observe that, as $\xi$ increases, the curve approaches the vertical axis and becomes more sharply peaked near zero. For fixed 
$\xi$, the curve broadens as $\sigma$ increases.

In the sequel, aiming to disclose the asymptotic probability density function of ${\cal Z}(t)$, we introduce the Laplace transform of the pdf 
of $\tilde {\cal Z}(t)$, defined for $p>0$ as 
 \begin{equation}
{\tilde r}_p (x|0)=\int_0^{+\infty}e^{-p t} {\tilde r}(x,t|0)dt.
    \label{dens_OU_refl_noC}
\end{equation}
The expression of ${\tilde r}_p(x|0)$ is well known in the literature \cite{Giorno2023}. 
However, in the following proposition, we provide an alternative expression of such Laplace transform which will be useful in the study of ${\cal Z}(t)$.
\begin{proposition}
For $p>0$ and $x>0$, the Laplace transform ($\ref{dens_OU_refl_noC}$) is given by
\begin{equation}
    {\tilde r}_p(x|0)=\exp\left\{-\frac{\alpha}{ \sigma^2}[(x-\beta)^2-\beta^2]\right\}\frac{\sqrt{\alpha}}{ \sigma p}\,\frac{H\left(-\frac{p}{\alpha},\frac{\sqrt{\alpha}}{\sigma}(x-\beta)\right)}{H\left(-1-\frac{p}{\alpha},-\frac{\sqrt{\alpha}}{\sigma}\beta\right)}, 
    \label{lapl_tr_r_tilde}
\end{equation}
where 
\begin{equation}
    H(\nu, z)= 2^\nu \sqrt{\pi} \left( 
\frac{1}{\Gamma\left(\frac{1 - \nu}{2}\right)} 
\, {}_1F_1\left(-\frac{\nu}{2}, \frac{1}{2}, z^2\right) 
- \frac{2z}{\Gamma\left(-\frac{\nu}{2}\right)} 
\, {}_1F_1\left(\frac{1 - \nu}{2}, \frac{3}{2}, z^2\right) 
\right)
\label{HermiteH}
\end{equation}
is the Hermite function. 
\end{proposition}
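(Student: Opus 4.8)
The plan is to derive the two-point boundary value problem satisfied by $\tilde r_p(x|0)$ directly from the Fokker--Planck equation and to recognise its decaying solution as a Hermite function. The process $\tilde{\cal Z}(t)$ is the reflected OU diffusion obtained from Eq. (\ref{eq:equdiffsomma}) by removing the catastrophe term, so $\tilde r(x,t|0)$ solves $\partial_t\tilde r = \alpha\,\partial_x[(x-\beta)\tilde r] + \tfrac12\sigma^2\partial_{xx}\tilde r$ on $x\geq 0$, with the reflecting (zero-flux) condition at the origin, the decay condition $\lim_{x\to+\infty}\tilde r(x,t|0)=0$ [the $\xi=0$ analogue of (\ref{eq:equdiffbound2})], and $\tilde r(x,0|0)=\delta(x)$. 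First I would apply the Laplace transform (\ref{dens_OU_refl_noC}) in $t$; using $\mathcal{L}[\partial_t\tilde r]=p\,\tilde r_p-\delta(x)$ this converts the PDE into the second-order ODE $\tfrac12\sigma^2\tilde r_p'' + \alpha(x-\beta)\tilde r_p' + (\alpha-p)\tilde r_p = -\delta(x)$, which is homogeneous on $x>0$.

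The core step is to reduce this homogeneous ODE to the Hermite equation. Introducing the scaled variable $w=\tfrac{\sqrt\alpha}{\sigma}(x-\beta)$ and substituting $\tilde r_p = e^{-w^2}u(w)$, a short computation turns the equation into $u''(w)-2w\,u'(w)+2\nu\,u(w)=0$ with index $\nu=-p/\alpha$, that is, exactly the equation solved by $H(\nu,\cdot)$ of (\ref{HermiteH}). Among its two independent solutions only $e^{-w^2}H(-p/\alpha,w)$ is admissible: since $H(\nu,w)\sim(2w)^{\nu}$ as $w\to+\infty$, the Gaussian factor forces this solution to vanish at infinity, whereas the complementary solution grows like $e^{w^2}$. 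Hence the decay condition selects $\tilde r_p(x|0)=C\,e^{-w^2}H(-p/\alpha,w)$ for some constant $C=C(p)$.

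It remains to fix $C$ from the behaviour at the origin. Writing the Laplace-transformed probability flux $J_p(x)=-\alpha(x-\beta)\tilde r_p-\tfrac12\sigma^2\tilde r_p'$ and using the Hermite derivative identity $\partial_z H(\nu,z)=2\nu\,H(\nu-1,z)$, the terms proportional to $w\,H(\nu,w)$ cancel and the flux collapses to the compact form $J_p(x)=-\sigma\sqrt\alpha\,\nu\,C\,e^{-w^2}H(\nu-1,w)$. Integrating the transformed balance $p\,\tilde r_p-\delta(x)=-J_p'(x)$ over $(0^+,\infty)$ and using $\int_0^\infty\tilde r_p\,dx=1/p$ (conservativeness of $\tilde{\cal Z}$) gives $J_p(0^+)=1$, which, evaluated at $w_0=-\tfrac{\sqrt\alpha}{\sigma}\beta$ with $\nu-1=-1-p/\alpha$, yields $C=\tfrac{\sqrt\alpha}{\sigma p}\,e^{\alpha\beta^2/\sigma^2}\big/H(-1-p/\alpha,-\tfrac{\sqrt\alpha}{\sigma}\beta)$. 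Substituting $C$ back and combining $e^{\alpha\beta^2/\sigma^2}e^{-w^2}=\exp\{-\tfrac{\alpha}{\sigma^2}[(x-\beta)^2-\beta^2]\}$ produces precisely (\ref{lapl_tr_r_tilde}); the Kummer-series representation (\ref{HermiteH}) of $H(\nu,z)$ is standard and is only quoted.

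I expect the main obstacle to be the correct treatment of the source at the reflecting boundary: here the starting point and the reflecting wall coincide at $x=0$, so the delta sits exactly on the boundary and the naive zero-flux condition fails to pin down $C$. The resolution --- that the flux just inside the domain equals the injection rate, $J_p(0^+)=1$, together with the index shift $\nu\mapsto\nu-1$ generated by the Hermite derivative identity --- is exactly what produces both the prefactor $\tfrac{\sqrt\alpha}{\sigma p}$ and the denominator $H(-1-p/\alpha,\cdot)$. An alternative, in the spirit of \cite{Giorno2023}, would be to start from the known parabolic-cylinder form of $\tilde r_p(x|0)$ and rewrite it through $D_\nu(z)=2^{-\nu/2}e^{-z^2/4}H(\nu,z/\sqrt2)$ combined with (\ref{HermiteH}); this sidesteps the boundary subtlety but merely repackages the same computation, so the self-contained route above is the one I would present.
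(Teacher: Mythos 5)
Your derivation is correct, but it follows a genuinely different route from the paper. The paper's proof is essentially a change of notation: it quotes formula (4.13) of \cite{Giorno2023}, which already gives ${\tilde r}_p(x|0)$ as a ratio of parabolic cylinder functions $D_\nu$, collapses the bracketed combination of four $D_\nu$'s via Eq.~(53) of \cite{Nasri2016}, and then rewrites $D_\nu$ in terms of the Hermite function $H(\nu,z)$ of (\ref{HermiteH}). Your proof instead builds the result from scratch: Laplace-transforming the Fokker--Planck equation, reducing the homogeneous ODE on $x>0$ to the Hermite equation $u''-2wu'+2\nu u=0$ with $\nu=-p/\alpha$ via the substitution ${\tilde r}_p=e^{-w^2}u(w)$, and fixing the constant through the flux identity $J_p(0^+)=1$ together with $\partial_z H(\nu,z)=2\nu H(\nu-1,z)$ --- which is exactly where the prefactor $\sqrt{\alpha}/(\sigma p)$ and the shifted index $-1-p/\alpha$ in the denominator come from. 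I checked the reduction to the Hermite equation, the cancellation in the flux, and the normalization $\int_0^\infty {\tilde r}_p\,dx=1/p$; all the algebra goes through and reproduces (\ref{lapl_tr_r_tilde}) exactly. Your approach buys self-containedness and makes transparent why the denominator carries the index $-1-p/\alpha$, at the cost of having to justify the boundary treatment yourself; the paper's approach outsources all analysis to \cite{Giorno2023} and reduces the proof to special-function identities. One small imprecision: the complementary solution of the Hermite equation, after multiplication by $e^{-w^2}$, behaves like $w^{p/\alpha-1}$ as $w\to+\infty$, so for $0<p<\alpha$ it does tend to zero and the pointwise decay condition alone does not discard it; what does discard it is non-integrability, i.e.\ the normalization $\int_0^\infty{\tilde r}_p\,dx=1/p<\infty$ that you invoke anyway in the flux step. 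Stating the selection criterion as integrability rather than decay would close this gap.
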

\begin{proof}
By making use of formula (4.13) of \cite{Giorno2023}, we obtain
\begin{eqnarray*}
&& \hspace*{-1.2cm}
{\tilde r}_p(x|0)=\frac{2^{\frac{p}{\alpha}-1}}{\pi \sigma \sqrt{\alpha}}\Gamma\left(\frac{p}{2 \alpha}\right)\Gamma\left(\frac{1}{2}+\frac{p}{2 \alpha}\right)
\exp\left\{-\frac{\alpha}{2 \sigma^2}[(x-\beta)^2-\beta^2]\right\} \frac{D_{-\frac{p}{\alpha}}\left(\frac{\sqrt{2\alpha}}{\sigma}
(x-\beta)\right)}{D_{-\frac{p}{\alpha}-1}\left(-\frac{\sqrt{2\alpha}}{\sigma}\beta\right)}
\nonumber\\
&& \hspace*{-0.8cm}
\times\left[D_{-\frac{p}{\alpha}-1}\left(-\frac{\sqrt{2\alpha}}{\sigma}\beta\right)D_{-\frac{p}{\alpha}}\left(\frac{\sqrt{2\alpha}}{\sigma}\beta\right)+D_{-\frac{p}{\alpha}-1}\left(\frac{\sqrt{2\alpha}}{\sigma}\beta\right)D_{-\frac{p}{\alpha}}\left(-\frac{\sqrt{2\alpha}}{\sigma}\beta\right)
\right],
\end{eqnarray*}
where $D_{\nu}(z)$ is the parabolic cylinder function. Hence, due to Eq. (53) of \cite{Nasri2016}, we obtain
\begin{equation*}
{\tilde r}_p(x|0)=\frac{2^{\frac{p}{\alpha}-1} \sqrt{2\pi}}{\pi \sigma \sqrt{\alpha}}
\frac{\Gamma\left(\frac{p}{2 \alpha}\right)\Gamma\left(\frac{1}{2}+\frac{p}{2 \alpha}\right)}{\Gamma\left(1+\frac{p}{\alpha}\right)}
\exp\left\{-\frac{\alpha}{2 \sigma^2}[(x-\beta)^2-\beta^2]\right\} \frac{D_{-\frac{p}{\alpha}}\left(\frac{\sqrt{2\alpha}}{\sigma}
(x-\beta)\right)}{D_{-\frac{p}{\alpha}-1}\left(-\frac{\sqrt{2\alpha}}{\sigma}\beta\right)},
\label{lapl_transf_r_tilde}
\end{equation*}
so that the result follows after some straightforward calculations.
\end{proof}
%
Let us denote by $w(x)$ the asymptotic distribution of ${\cal Z}(t)$, i.e.:
\begin{equation}
w(x):=\frac{{\rm d}}{{\rm d}x}{\mathbb P}({\cal Z}\leq x)= \lim_{t \rightarrow +\infty} r(x,t|0),
\label{def_w}
\end{equation}
where ${\cal Z}$ is the random variable describing the stationary state.
The explicit expression of $w(x)$ is obtained in the following proposition. 
\begin{proposition}
The asymptotic distribution of ${\cal Z}(t)$ is given by
\begin{equation}
w(x)=\frac{\sqrt{\alpha}\exp\left\{-\frac{\alpha}{ \sigma^2}[(x-\beta)^2-\beta^2]\right\} H\left(-\frac{\xi}{\alpha},\frac{\sqrt{\alpha}(x-\beta)}{\sigma}\right)}{\sigma\, H\left(-1-\frac{\xi}{\alpha},-\frac{\sqrt{\alpha}\beta}{\sigma}\right)}, \quad x>0,
\label{dens_asint}
\end{equation}
where the Hermite function $H(\nu,z)$ has been defined in Eq. (\ref{HermiteH}).
\end{proposition}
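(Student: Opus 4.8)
The plan is to pass to the limit $t\to+\infty$ in the subordination relation (\ref{rel_pdf_1}) linking the catastrophe-driven density $h(x,t)$ with the density $\tilde r(x,t|0)$ of the reflected Ornstein–Uhlenbeck process, and to recognize the surviving term as $\xi$ times the Laplace transform (\ref{dens_OU_refl_noC}) evaluated at the catastrophe rate $p=\xi$. This is the exact diffusive analogue of the subordination identities (\ref{relazione1}) and (\ref{rel2}) used for the discrete model.

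First I would recall that, by definition (\ref{def_w}) together with the identification $r(x,t|0)=h(x,t)$ (the density of ${\cal Z}(t)$ started at $0$), one has $w(x)=\lim_{t\to+\infty}h(x,t)$. Writing (\ref{rel_pdf_1}) as
$$
h(x,t)=e^{-\xi t}\tilde r(x,t|0)+\xi\int_0^t e^{-\xi\tau}\tilde r(x,\tau|0)\,{\rm d}\tau,
$$
I would handle the two terms separately. Since $\tilde r(x,t|0)$ converges, as $t\to+\infty$, to the bounded stationary density of the reflected OU process, it is uniformly bounded in $t$ for each fixed $x$; hence the first term is $O(e^{-\xi t})$ and vanishes in the limit.

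Next I would observe that $e^{-\xi\tau}\tilde r(x,\tau|0)$ is integrable on $(0,+\infty)$, the exponential factor dominating the bounded density, so that the second term converges (by dominated convergence) to
$$
\xi\int_0^{+\infty} e^{-\xi\tau}\tilde r(x,\tau|0)\,{\rm d}\tau=\xi\,\tilde r_{p}(x|0)\big|_{p=\xi},
$$
which is precisely $\xi$ times the Laplace transform (\ref{dens_OU_refl_noC}) evaluated at $p=\xi$. This is the key identification, turning a time-asymptotic statement into a purely algebraic substitution.

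Finally I would insert $p=\xi$ into the explicit formula (\ref{lapl_tr_r_tilde}) for $\tilde r_p(x|0)$: the prefactor $\tfrac{\sqrt{\alpha}}{\sigma p}$ becomes $\tfrac{\sqrt{\alpha}}{\sigma\xi}$, so multiplication by $\xi$ cancels the $1/\xi$ and leaves $\tfrac{\sqrt{\alpha}}{\sigma}$, while the orders $-p/\alpha$ and $-1-p/\alpha$ of the Hermite functions (\ref{HermiteH}) become $-\xi/\alpha$ and $-1-\xi/\alpha$, reproducing (\ref{dens_asint}) verbatim. The only genuine obstacle is the analytic justification of the two limits — the uniform-in-$t$ boundedness of $\tilde r(x,t|0)$ and the legitimacy of exchanging limit and integral — which both follow from the exponential ergodicity of the reflected OU process together with the integrating factor $e^{-\xi\tau}$; everything past the identification with the Laplace transform is a direct substitution.
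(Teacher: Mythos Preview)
Your proof is correct and follows exactly the same route as the paper: take $t\to+\infty$ in (\ref{rel_pdf_1}), discard the $e^{-\xi t}\tilde r(x,t|0)$ term, identify the surviving integral as $\xi\,\tilde r_\xi(x|0)$, and substitute $p=\xi$ into (\ref{lapl_tr_r_tilde}). The only difference is that you supply the analytic justifications (boundedness of $\tilde r$, dominated convergence) that the paper omits.
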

\begin{proof}
From Eq. (\ref{def_w}) and recalling Eq. (\ref{rel_pdf_1}), we have that 
$$
w(x)=\xi \int_0^{+\infty} e^{-\xi \tau}{\tilde r}(x,\tau|0)d\tau=\xi\,{\tilde r}_\xi(x|0),
$$
where ${\tilde r}_\xi(x|0)$ has been provided  in Eq. (\ref{lapl_tr_r_tilde}). Hence the proof follows by means of straightforward calculations.
\end{proof}
\begin{remark}
As a confirmation of the results obtained, we demonstrate that the integral of $w(x)$ over $[0,+\infty]$ is unitary. 
Indeed, we have
\begin{eqnarray*}
&& \hspace*{-1cm}
\int_0^{+\infty}w(x)dx
=\frac{e^{\frac{\alpha \beta^2}{\sigma^2}}}{H\left(-1-\frac{\xi}{\alpha},-\frac{\sqrt{\alpha}\beta}{\sigma}\right)}\int_{-\frac{\beta \sqrt{\alpha}}{\sigma}}^{+\infty}e^{-z^2}H\left(-\frac{\xi}{\alpha},z\right){\rm d}z
\nonumber\\
&& \hspace*{-1cm}
=\frac{1}{2 H\left(-1-\frac{\xi}{\alpha},-\frac{\sqrt{\alpha}\beta}{\sigma}\right) \sigma\Gamma\left(1+\frac{\xi}{\alpha}\right)}\left[2\beta \sqrt{\alpha}\Gamma\left(\frac{\xi}{2\alpha}+1\right)e^{\frac{\alpha \beta^2}{\sigma^2}}
{}_{1}F_{1}\left(\frac{1}{2}-\frac{\xi}{2\alpha},\frac{3}{2},-\frac{\alpha \beta^2}{\sigma^2}\right)\right.
\nonumber\\
&& \hspace*{-1cm}
\left.+\sigma\Gamma\left(\frac{\xi}{2\alpha}+\frac{1}{2}\right)e^{\frac{\alpha \beta^2}{\sigma^2}}
{}_{1}F_{1}\left(-\frac{\xi}{2\alpha},\frac{1}{2},-\frac{\alpha \beta^2}{\sigma^2}\right)
\right]\nonumber\\
&& \hspace*{-1cm}
=\frac{1}{2\Gamma\left(\frac{\xi}{\alpha}+1\right)}\frac{1}{H\left(-1-\frac{\xi}{\alpha},-\frac{\sqrt{\alpha}\beta}{\sigma}\right)}\left[\frac{2 \beta \sqrt{\alpha}}{\sigma}\Gamma\left(\frac{\xi}{2\alpha}+1\right){}_{1}F_{1}\left(1+\frac{\xi}{2\alpha},\frac{3}{2},\frac{\alpha \beta^2}{\sigma^2}\right)\right.
\nonumber\\
&& \hspace*{-1cm}
\left.+\Gamma\left(\frac{\xi}{2\alpha}+\frac{1}{2}\right){}_{1}F_{1}\left(\frac{1}{2}+\frac{\xi}{2\alpha},\frac{1}{2},\frac{\alpha \beta^2}{\sigma^2}\right)
\right]=1,
\end{eqnarray*}
due to the definition of the Hermite function.
\end{remark}

\begin{figure}[t] 
\begin{center}
\hspace*{-0.4cm}
\includegraphics[height=4.5cm,width=7.5cm]{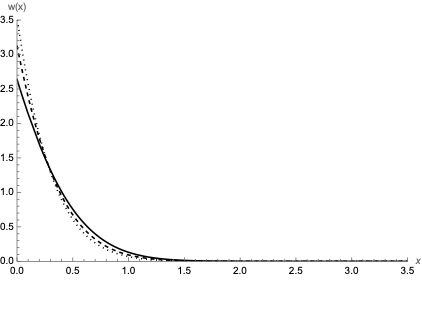}
\includegraphics[height=4.5cm,width=7.5cm]{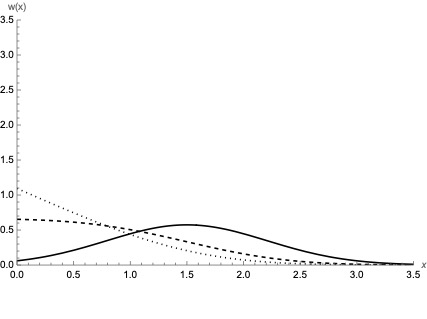}
\end{center}
\caption{Plots of $w(x)$ for $\sigma=1$, $\xi=0$ (solid line), $\xi=1$ (dashed line) and $\xi=2$ (dotted line), with $\alpha=1$, $\beta=-1$ (left-hand side)
and $\alpha=1$, $\beta=1.5$ (right-hand side).}
\label{plotw}
\end{figure}

Figure \ref{plotw} shows the asymptotic cumulative probability density function for different values of the involved parameters. 
As in the case of transient regime, the curve approaches the vertical axis becoming more peaked near the origin when $\xi$ increases. 
We note that for fixed $\xi$, as $\beta$ increases, the entire distribution translates to the right. 

\begin{proposition}
 The first and second asymptotic moments of ${\cal Z}(t)$ are given by
\begin{eqnarray*}
&& \hspace*{-0.8cm}
E[{\cal Z}]=
\frac{2^{-\xi/\alpha} \sqrt{\pi} \beta}{H\left(-1-\frac{\xi}{\alpha},-\frac{\sqrt{\alpha}\beta}{\sigma}\right)}\left[
\frac{\sigma}{4 \beta \sqrt{\alpha} \Gamma\!\left(\frac{3}{2}+\frac{\xi}{2 \alpha} \right)}
{}_{1}F_{1}\left(1+\frac{\xi}{2\alpha},\frac{1}{2},\frac{\alpha \beta^2}{\sigma^2}\right)\right.
\nonumber
\\
&& \hspace*{-0.8cm}
\left.
+\frac{1}{2 \Gamma\!\left(1+\frac{\xi}{2 \alpha} \right)} {}_{1}F_{1}\left(\frac{1}{2}+\frac{\xi}{2\alpha},\frac{1}{2},\frac{\alpha \beta^2}{\sigma^2}\right)
-\frac{\beta^2 \xi}{3 \sigma^2 \Gamma\!\left(1+\frac{\xi}{2 \alpha}\right)}
{}_{1}F_{1}\left(\frac{3}{2}+\frac{\xi}{2\alpha},\frac{5}{2},\frac{\alpha \beta^2}{\sigma^2}\right)\right],
\end{eqnarray*}
\begin{eqnarray*}
&& \hspace*{-0.8cm}
E[{\cal Z}^2]=
\frac{\sigma}{4 \beta \alpha^{7/2}\Gamma\!\left(3+\frac{\xi}{\alpha}\right)H\left(-1-\frac{\xi}{\alpha},-\frac{\sqrt{\alpha}\beta}{\sigma}\right)}
\left[ -\xi (\alpha+\xi) \sigma^2 \Gamma\!\left(\frac{\xi}{2 \alpha} \right)  {}_{1}F_{1}\left(\frac{\xi}{2\alpha},\frac{1}{2},\frac{\alpha \beta^2}{\sigma^2}\right)
\right.
\nonumber\\
&& \hspace*{-0.8cm}
\left. +\xi \, [(\alpha+\xi) \sigma^2 +2 \alpha^2 \beta^2] \, \Gamma\!\left(\frac{\xi}{2 \alpha} \right) {}_{1}F_{1}\left(1+\frac{\xi}{2\alpha},\frac{1}{2},\frac{\alpha \beta^2}{\sigma^2}\right)
+4 \alpha^{5/2} \beta \sigma \Gamma\!\left(\frac{3}{2}+\frac{\xi}{2\alpha}\right)  {}_{1}F_{1}\left(\frac{3}{2}+\frac{\xi}{2\alpha},\frac{1}{2},\frac{\alpha \beta^2}{\sigma^2}\right)\right],
\end{eqnarray*}
where the Hermite function is given in Eq. (\ref{HermiteH}).
\label{propAsyMom}
\end{proposition}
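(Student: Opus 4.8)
The plan is to compute both moments directly from the asymptotic density $w(x)$ obtained in Eq.~(\ref{dens_asint}), by evaluating $E[{\cal Z}^r]=\int_0^{+\infty}x^r\,w(x)\,{\rm d}x$ for $r=1,2$. First I would factor the exponential in (\ref{dens_asint}) as $e^{\alpha\beta^2/\sigma^2}\,e^{-\alpha(x-\beta)^2/\sigma^2}$ and perform the same change of variable $z=\frac{\sqrt{\alpha}}{\sigma}(x-\beta)$ already used in the normalization remark preceding the statement; this sends the lower endpoint $x=0$ to $z=-\frac{\sqrt{\alpha}\beta}{\sigma}$, and, after the Jacobian $\frac{\sigma}{\sqrt{\alpha}}$ cancels the leading factor of $w$, turns each moment into a constant multiple of
$$
\int_{-\frac{\sqrt{\alpha}\beta}{\sigma}}^{+\infty}\Bigl(\beta+\tfrac{\sigma}{\sqrt{\alpha}}z\Bigr)^{r}e^{-z^2}H\!\Bigl(-\tfrac{\xi}{\alpha},z\Bigr)\,{\rm d}z,
$$
the common prefactor being $e^{\alpha\beta^2/\sigma^2}\big/H(-1-\frac{\xi}{\alpha},-\frac{\sqrt{\alpha}\beta}{\sigma})$, exactly as in the $r=0$ computation that produces $\int_0^{+\infty}w(x)\,{\rm d}x=1$.

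Next I would expand $\bigl(\beta+\tfrac{\sigma}{\sqrt{\alpha}}z\bigr)^{r}$ by the binomial theorem, reducing the task to the evaluation of the elementary moment integrals
$$
I_m:=\int_{a}^{+\infty}z^{m}e^{-z^2}H\!\Bigl(-\tfrac{\xi}{\alpha},z\Bigr)\,{\rm d}z,\qquad a=-\tfrac{\sqrt{\alpha}\beta}{\sigma},\quad m=0,1,2.
$$
Substituting the series representation (\ref{HermiteH}) of the Hermite function as a linear combination of the two confluent hypergeometric functions ${}_1F_1\bigl(-\tfrac{\nu}{2},\tfrac12,z^2\bigr)$ and ${}_1F_1\bigl(\tfrac{1-\nu}{2},\tfrac32,z^2\bigr)$ with $\nu=-\tfrac{\xi}{\alpha}$, each $I_m$ splits into half-line integrals of the type $\int_a^{+\infty}z^{m}e^{-z^2}\,{}_1F_1(\cdot,\cdot,z^2)\,{\rm d}z$. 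These evaluate in closed form, producing Gamma factors of the shape $\Gamma\bigl(\tfrac{\xi}{2\alpha}+\cdot\bigr)$ together with ${}_1F_1$ functions; Kummer's transformation is what converts the naturally arising argument $-\alpha\beta^2/\sigma^2$ into the $+\alpha\beta^2/\sigma^2$ displayed in the statement.

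Finally I would reassemble the $I_m$ with their binomial coefficients and the common prefactor to recover $E[{\cal Z}]$ and $E[{\cal Z}^2]$ in the stated form, using the $r=0$ instance (which must reproduce the normalization identity of the preceding remark) as a sanity check; as an independent cross-check, the moment relation (\ref{rel_pdf_1}), which at $t\to+\infty$ gives $E[{\cal Z}^r]=\xi\int_0^{+\infty}e^{-\xi\tau}E[\tilde{\cal Z}^r(\tau)]\,{\rm d}\tau$, should be consistent with the outcome. I expect the main obstacle to be the bookkeeping inside the integrals $I_m$: combining the several Gamma-function coefficients coming from the two ${}_1F_1$ pieces and from the successive powers $z^m$, and then collapsing the resulting sum of confluent hypergeometric terms into the single compact combination recorded for each moment. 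This is precisely where the contiguous-function relations and Kummer's identity for ${}_1F_1$ have to be applied carefully in order to match the normalization of the arguments appearing in the final expressions.
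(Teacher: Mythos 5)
Your proposal is correct and follows essentially the same route as the paper: the paper's proof likewise computes $E[{\cal Z}^n]=\int_0^{+\infty}x^n w(x)\,{\rm d}x$ from Eq.~(\ref{dens_asint}), performs the change of variable $y=\frac{\sqrt{\alpha}}{\sigma}(x-\beta)$, and applies the binomial expansion to reduce everything to the integrals $\int_{-\beta\sqrt{\alpha}/\sigma}^{+\infty}y^{j}e^{-y^2}H\left(-\frac{\xi}{\alpha},y\right){\rm d}y$. Your additional detail on evaluating those integrals via the ${}_1F_1$ representation (\ref{HermiteH}) of the Hermite function and Kummer's transformation simply makes explicit the ``straightforward calculations'' the paper leaves implicit.
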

\begin{proof}
The results follow from Eq. (\ref{dens_asint}) noting that, for $n\geq 1$,
\begin{equation*}
\int_{0}^{+\infty} x^n w(x) {\rm d}x=
\frac{\sqrt{\alpha}}{\sigma H\left(-1-\frac{\xi}{\alpha},-\frac{\sqrt{\alpha}\beta}{\sigma}\right)} {\rm e}^{\frac{\alpha \beta^2}{\sigma^2}}
\sum_{j=0}^n {n \choose j}\beta^{n-j}\left(\frac{\sigma}{\sqrt{\alpha}}\right)^{j+1}
\int_{-\frac{\beta \sqrt{\alpha}}{\sigma}}^{+\infty}   y^j {\rm e}^{-y^2}
H\left(-\frac{\xi}{\alpha},y\right) {\rm d}y.
\end{equation*}
\end{proof}

\begin{remark}
Note that in the case of absence of catastrophes, the expected value and the second order moment are given by
\begin{eqnarray*}
&& \hspace*{-0.5cm}
E[{\cal Z}]\big\vert_{\xi=0}=\beta+ \frac{\sigma {\rm e}^{-\alpha \frac{\beta^2}{\sigma^2}}}{\sqrt{\pi \alpha} \left(1+Erf(\sqrt{\alpha}\beta/\sigma) \right)},
\\
&& \hspace*{-0.5cm}
E[{\cal Z}^2]]\big\vert_{\xi=0}=\beta^2+\frac{\sigma^2}{2 \alpha}+ \frac{\sigma \beta {\rm e}^{-\alpha \frac{\beta^2}{\sigma^2}}}{\sqrt{\pi \alpha} 
\left(1+Erf(\sqrt{\alpha}\beta/\sigma) \right)},
\end{eqnarray*}
which identify with Eq. (4.15) of \cite{Giorno2023}.
\par
Moreover, in the case $\beta=0$, the expected value and the second order moment take a very simple expression given by
the ratio of gamma functions
\begin{eqnarray*}
&& \hspace*{-0.5cm}
E[{\cal Z}]\big\vert_{\beta=0}=\frac{\sigma \Gamma\left[1+\frac{\xi}{2 \alpha}\right]}
{2 \sqrt{\alpha} \Gamma\left[\frac{3}{2}+\frac{\xi}{2 \alpha}\right]},
\\
&& \hspace*{-0.5cm}
E[{\cal Z}^2]\big\vert_{\beta=0}=\frac{2^{1+\frac{\xi}{\alpha}} \sigma^2 \Gamma\left[\frac{3}{2}+\frac{\xi}{2 \alpha}\right] \Gamma\left[1+\frac{\xi}{2 \alpha}\right]}
{\sqrt{\pi} \alpha \Gamma\left[3+\frac{\xi}{\alpha}\right]}.
\end{eqnarray*}
\end{remark}

\begin{figure}[t]
\centering
\includegraphics[height=6cm,width=14cm]{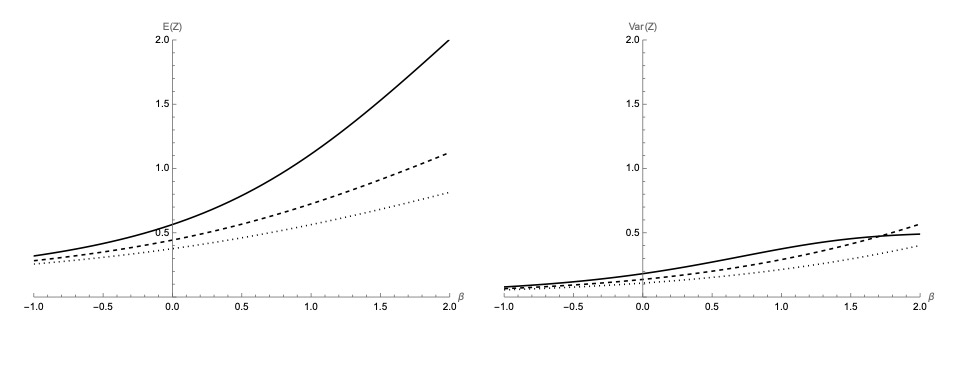}
\caption{The asymptotic mean and variance for $\sigma=1$ and $\alpha=1$, with $\xi=0$ (solid line), $\xi=1$ (dashed line) and $\xi=2$ (dotted line).}
\label{momentiw}
\end{figure}

Figure \ref{momentiw} presents some plots illustrating the behavior of the asymptotic mean and variance as functions of $\beta$, derived from Proposition \ref{propAsyMom} by considering
different values of $\xi$. Both the asymptotic mean and variance are increasing functions of $\beta$. Furthermore, the expected value takes larger values as $\xi$ decreases.
Regarding the asymptotic variance, we observe a distinct behavior based on the value of $\xi$: when $\xi=0$, the asymptotic variance tends to a constant value given by $\frac{1}{2 \alpha}$ as $\beta \to +\infty$; 
in the case $\xi>0$, the asymptotic variance diverges as $\beta \to +\infty$.

Let us denote by $w(x,j)$ the asymptotic (sub)density related to the $j$-th ray of the spider, i.e.
\begin{equation}
w(x,j):=\lim_{t \rightarrow +\infty} f(x,j,t),\qquad x \in \mathbb{R}^+\cup\{0\},\; j \in D.
\label{asymp_dens_j}
\end{equation}
Note that, due to Eq. (\ref{def_w}), it is 
\begin{equation}
 w(x)=\sum_{j\in D} w(x,j), \qquad x \in \mathbb{R}^+\cup\{0\}.
\label{defasymp_dens_j}
\end{equation}
\begin{proposition}
For all $\alpha,\sigma >0$, $\beta \in \mathbb R$ and $x \in \mathbb{R}^+\cup\{0\}$, $j \in D$, the asymptotic density (\ref{asymp_dens_j}) 
is given by
$$
w(x,j)=\pi_j \,w(x), 
$$
where $w(x)$ is given in (\ref{dens_asint}) and $\vec \pi=(\pi_1, \ldots, \pi_d)$ is the invariant distribution of the stochastic matrix $C$ and the 
Hermite function has been defined in Eq. (\ref{HermiteH}).
\end{proposition}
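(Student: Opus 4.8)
The plan is to mirror the argument used for the discrete stationary law that led to the factorization (\ref{independence}), replacing the generating-function ODE there by the stationary Fokker--Planck equation on each ray. First I would let $t\to+\infty$ in the evolution equation (\ref{eq:equdiff}) of Proposition \ref{prop:diffj}. Expanding the drift term $-\partial_x\{-\alpha(x-\beta)f\}=\alpha(x-\beta)\partial_x f+\alpha f$ and observing that the interior of each ray carries no source (the catastrophe contribution enters only through the $-\xi f$ sink and through the behaviour at the origin), the limit density $w(x,j)$ defined in (\ref{asymp_dens_j}) satisfies, for every $j\in D$, the homogeneous second-order linear ODE
\begin{equation*}
\frac{\sigma^2}{2}\,\frac{\partial^2}{\partial x^2}w(x,j)+\alpha(x-\beta)\,\frac{\partial}{\partial x}w(x,j)+(\alpha-\xi)\,w(x,j)=0,\qquad x>0,
\end{equation*}
together with the decay condition inherited from (\ref{cond_approx_diff2}). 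Summing over $j$ and recalling (\ref{defasymp_dens_j}) shows that $w(x)$ solves exactly the same equation, consistently with the stationary limit of (\ref{eq:equdiffsomma}).

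Next I would exploit the structure of this ODE. Its two-dimensional solution space is spanned by one solution that decays as $x\to+\infty$ and one that grows at infinity; this dichotomy is precisely what the Hermite-function representation (\ref{dens_asint}) of $w(x)$ makes explicit, the decaying branch being the one carrying the Gaussian weight $\exp\{-\alpha(x-\beta)^2/\sigma^2\}$ times a Hermite function of polynomial growth. Consequently the subspace of solutions satisfying $\lim_{x\to+\infty}w(x,j)=0$ is one-dimensional, so every admissible $w(x,j)$ must be a scalar multiple of $w(x)$. Hence there exist constants $\kappa_j\ge 0$ with
\begin{equation*}
w(x,j)=\kappa_j\,w(x),\qquad x\ge 0,\ j\in D,
\end{equation*}
and, by (\ref{defasymp_dens_j}), necessarily $\sum_{j\in D}\kappa_j=1$.

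It then remains to identify the weights $\kappa_j$. Taking $t\to+\infty$ in the origin-matching relation (\ref{cond_approx_diff1}) gives $w(0,j)=\sum_{l\in D}c_{l,j}\,w(0,l)$, and inserting $w(0,j)=\kappa_j w(0)$ yields
\begin{equation*}
\kappa_j\,w(0)=\sum_{l\in D}c_{l,j}\,\kappa_l\,w(0).
\end{equation*}
Since $w(0)>0$ by the explicit form (\ref{dens_asint}), we may cancel $w(0)$ and obtain $\kappa_j=\sum_{l\in D}c_{l,j}\kappa_l$, i.e.\ $\vec\kappa=(\kappa_1,\ldots,\kappa_d)$ is a left invariant vector of the stochastic matrix $C$. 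Because $C$ is irreducible stochastic, the invariant law $\vec\pi$ is the unique nonnegative solution of this system normalized to unit sum; combined with $\sum_j\kappa_j=1$ this forces $\vec\kappa=\vec\pi$, whence $w(x,j)=\pi_j\,w(x)$.

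I expect the main obstacle to be the rigorous justification of the one-dimensionality of the decaying solution space: one must verify that the two fundamental solutions of the stationary ODE genuinely separate in growth rate at $+\infty$, so that the decay condition (\ref{cond_approx_diff2}) singles out a single direction and the proportionality step is legitimate. The Hermite-function form of $w(x)$ in (\ref{dens_asint}) renders this transparent, but a careful asymptotic comparison of the two independent solutions (or an appeal to the known spectral structure of the Ornstein--Uhlenbeck operator with a reflecting boundary) is what underpins the argument; all remaining steps are then routine linear algebra for the irreducible stochastic matrix $C$.
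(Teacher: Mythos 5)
Your proof is correct and follows essentially the same route as the paper: pass to the stationary Fokker--Planck equation on each ray, use the decay condition at infinity to reduce to a one-dimensional solution space so that $w(x,j)=\kappa_j\,w(x)$, and identify $\kappa_j=\pi_j$ from the origin relation $w(0,j)=\sum_{l}c_{l,j}w(0,l)$ together with the irreducibility of $C$. The only difference is one of emphasis: you spell out the uniqueness-up-to-scalar step (and correctly flag it as the point needing rigorous justification) that the paper leaves implicit in its closing sentence.
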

\begin{proof}
As $t \rightarrow +\infty$, Eq.\ (\ref{eq:equdiff}) can be written as  
\begin{equation}
0=-{\partial\over\partial x}\;
 \Bigl\{-\alpha(x-\beta)\,
 w(x,j)\Bigr\}
 +{1\over 2}\,\sigma^2\,{\partial^2\over\partial x^2}w(x,j) -\xi w(x,j), 
 \label{eq:equdiff_wj}
\end{equation}
whereas the conditions (\ref{eq:rifless}), (\ref{cond_approx_diff1}) and (\ref{cond_approx_diff2}) become
$$
\alpha \beta \sum_{j=1}^d w(0,j)-{\sigma^2\over 2} \frac{\partial}{\partial x} \sum_{j=1}^d w(x,j)\big\vert_{x=0}-\xi=0,
$$
\begin{equation}
 w(0,j)=\sum_{l=1}^d c_{l,j}\,w(0,l),\qquad j\in D,
 \label{eq:woj}
\end{equation}
$$
\lim_{x\rightarrow +\infty} w(x,j)=0.
$$
From Eq. (\ref{eq:woj}), recalling that $w(0)=\sum_{l=1}^d w(0,l)$, we have that 
$$
\pi_j:=\frac{w(0,j)}{w(0)}
$$
is the invariant distribution for the stochastic matrix $C$. 
\par 
Hence, from Eq. (\ref{eq:equdiff_wj}), being $w(0,j)=\pi_j w(0)$, and recalling Eq. (\ref{defasymp_dens_j}), 
the proof finally follows. 
\end{proof}
\begin{example}\rm
Recalling (\ref{c_lj}) and (\ref{eq:woj}), we can consider some examples of the matrix $C$,  
which regulates the switching mechanism for the particle phases, 
and the corresponding  vector $\vec \pi=(\pi_1, \ldots, \pi_d)$ of the invariant distribution.
\begin{enumerate}
\item 
The transitions from line $l$ to line $j$ occur uniformly:
$$
 c_{l,j}=\frac{1}{d},\qquad \forall \, l,j \in D.
$$

\item 
The transitions occur cyclically clockwise:
$$
c_{l,j}=\left\{
 \begin{array}{ll}
 1, & \quad j=l+1, \\[1mm]
 0, & \quad \hbox{otherwise} 
\end{array}
\right.
\quad (l=1,2,\ldots,d-1), 
\qquad 
c_{d,j}=\left\{
 \begin{array}{ll}
 1, & \quad j=1,  \\[1mm]
 0, & \quad \hbox{otherwise}. 
\end{array}
\right.
$$
In both these cases the invariant distribution is uniform
$\vec \pi=\left(\frac{1}{d},\,\frac{1}{d}, \ldots,\frac{1}{d}\right)$.
\item  
The transitions occur on adjacent lines, according to a random-walk scheme: 
%
$$
c_{1,j}=\left\{
 \begin{array}{ll}
 1, & \quad j=2, \\[1mm]
 0, & \quad \hbox{otherwise} 
\end{array}
\right.
\quad  
c_{l,j}=\left\{
 \begin{array}{ll}
 1-p, & \quad j=l-1,\\[1mm]
p, & \quad j=l+1, \\[1mm]
 0, & \quad \hbox{otherwise} 
\end{array}
\right.
\quad 
(l=2,3,\ldots,d-1), 
$$
$$
c_{d,j}=\left\{
 \begin{array}{ll}
 1, & \quad j=1,\\[1mm]
 0, & \quad \hbox{otherwise}. 
\end{array}
\right.
$$
The invariant distribution $\vec \pi$ has components
$$
 \pi_1=1-\frac{1}{2(1+p^2)} \frac{1-p^{d-1}}{1-p},
  \qquad 
 \pi_j=\frac{p^{j-2}}{2(1+p^2)},\quad j=2,3,\ldots,d.
 $$
\end{enumerate}
\end{example}

\section{Conclusions}
In this paper we have introduced and analyzed a finite QBD process subject to catastrophic events, extending the framework previously considered in \cite{DiCrescenzo2022}.  In particular, we consider level-dependent
one-step transition probabilities and assume that the transitions from a certain state are allowed only to the two adjacent levels within the same phase. However, from the level $0$ of any phase
we can have one-step transitions to the level $1$ of the same phase or of any other phase, which are regulated by an irreducible stochastic matrix. Moreover, 
catastrophes occur according to a Poisson process and instantaneously reset the system to the level $0$ of the current phase, producing a dynamics that combines gradual, level-structured evolution with sudden discontinuities.  
This mechanism provides a flexible stochastic representation for systems in which regular transitions are occasionally interrupted by abrupt resets, a phenomenon relevant in population dynamics, queueing theory, and networked systems.

By employing generating function techniques and relations between the process with catastrophes and the process without catastrophes, we characterized both the transient and long-term behavior of the process.  
Our analysis yields closed-form expressions for several relevant descriptors, and highlights how the catastrophic component influences the behavior of the process.   
Furthermore, by considering a diffusive approximation, we provided insight into the large-scale behavior of the system, offering an interpretable continuous limit that captures the interplay between regular dynamics and sudden collapses.

Finally, the results presented in this work contribute to the growing literature on stochastic models with catastrophic events, illustrating how the rich structure of QBD processes can be effectively combined with sudden-reset dynamics to capture realistic phenomena characterized by both persistence and abrupt change.


\appendix	
\section{Proof of Proposition \ref{asymptotic}}
\label{proof_prop_asym}

Let us evaluate the Laplace transform of $p(0,t)$. By requiring that $lim_{z \rightarrow 0^+}F(z,t)=0$, from Eq. (\ref{F(z,t)}) we obtain 
\begin{eqnarray}
  &&  \frac{e^{-\xi t}(\lambda+\mu e^{t(\lambda+\mu)})^N(-\mu+\mu e^{t(\lambda+\mu)})^N}{e^{2 N t(\lambda+\mu)}(\lambda+\mu)^{2N}}\nonumber\\
  &&+\frac{\xi}{\lambda+\mu}\int_0^t e^{-\xi(t-y)}(\mu-\mu e^{-(t-y)(\lambda+\mu)})^N(\mu+\lambda e^{-(t-y)(\lambda+\mu)})^N dy\nonumber\\
  &&-\frac{N \mu}{(\lambda+\mu)^{2N-1}}\int_0^t p(0,y)e^{-(t-y)(\lambda+\mu+\xi)}(\mu-\mu e^{-(t-y)(\lambda+\mu)})^{N-1}(\mu+\lambda e^{-(t-y)(\lambda+\mu)})^N dy=0,
 \nonumber
\end{eqnarray}
so that
\begin{eqnarray}
&&N(\lambda+\mu)\int_0^t p(0,y)e^{-(t-y)(\lambda+\mu+\xi)}(1- e^{-(t-y)(\lambda+\mu)})^{N-1}(\mu+\lambda e^{-(t-y)(\lambda+\mu)})^N dy\nonumber\\
&&=e^{-\xi t}(\lambda e^{-t(\lambda+\mu)}+\mu )^N(1- e^{-t(\lambda+\mu)})^N+\xi \int_0^t e^{-\xi(t-y)}(1- e^{-(t-y)(\lambda+\mu)})^N(\mu+\lambda e^{-(t-y)(\lambda+\mu)})^N dy.\nonumber 
\end{eqnarray}
Denoting by ${\cal L}_\eta (\cdot)$, $\eta>0$, the Laplace transform operator, and applying it on both sides of the previous equation, we obtain
\begin{eqnarray}
&& \hspace*{-0.1cm}
N(\lambda+\mu){\cal L}_{\lambda+\mu+\xi+\eta}\left[(1- e^{-t(\lambda+\mu)})^{N-1}(\mu+\lambda e^{-t(\lambda+\mu)})^N\right]{\cal L}_{\eta}[p(0,t)]
\nonumber\\
&& \hspace*{-0.1cm}
={\cal L}_{\eta+\xi}\left[(1- e^{-t(\lambda+\mu)})^N(\mu+\lambda e^{-t(\lambda+\mu)})^N\right]
+\frac{\xi}{\eta}{\cal L}_{\eta+\xi}\left[(1- e^{-t(\lambda+\mu)})^{N}(\mu+\lambda e^{-t(\lambda+\mu)})^N\right],\nonumber
\end{eqnarray}
so that 
\begin{equation*}
{\cal L}_{\eta}[p(0,t)]=\frac{{\cal L}_{\eta+\xi}\left[(1- e^{-t(\lambda+\mu)})^N(\mu+\lambda e^{-t(\lambda+\mu)})^N\right]\left(1+\frac{\xi}{\eta}\right)}{N(\lambda+\mu){\cal L}_{\lambda+\mu+\xi+\eta}\left[(1- e^{-t(\lambda+\mu)})^{N-1}(\mu+\lambda e^{-t(\lambda+\mu)})^N\right]}. 
\end{equation*}
Hence, due to Equation (28) of \cite{Prudnikov}, after some computations, we get the following expression of the Laplace transform of $p(0,t)$ 
\begin{equation}
{\cal L}_{\eta}[p(0,t)]=\frac{\left(1+\frac{\xi}{\eta}\right)
{}_{2}F_{1}\left(\frac{\eta+\xi}{\lambda+\mu},-N,\frac{\eta+\xi}{\lambda+\mu}+N+1;-\frac{\lambda}{\mu}\right)}{(\eta+\xi){}_{2}F_{1}\left(\frac{\eta+\xi}{\lambda+\mu}+1,-N,\frac{\eta+\xi}{\lambda+\mu}+N+1;-\frac{\lambda}{\mu}\right)}. 
\label{lapl_trans_p0}
\end{equation}
By applying the Laplace transform operator to Eq. (\ref{F(z,t)}), we obtain 
\begin{eqnarray}
&& \hspace*{-0.8cm}
{\cal L}_\eta[F(z,t)]=\int_0^\infty e^{-\eta t}F(z,t)dt=\frac{\lambda^N(1-z)^N}{z^N(\lambda+\mu)^{2N}}\sum_{j=0}^N {N \choose j}[(\lambda+\mu)z]^{N-j}\mu^j(1-z)^j
\nonumber\\
&& \hspace*{-0.8cm}
\times{\cal L}_{\eta+\xi}\left[(1- e^{-t(\lambda+\mu)})^j\left(\frac{\lambda z+\mu}{\lambda(1-z)}+e^{-t(\lambda+\mu)}\right)^N\right]  
-\frac{N\mu(1-z)^{N+1}\lambda^N}{(\lambda+\mu)^{2N-1}z^N}\sum_{j=0}^{N-1}{N-1 \choose j}
\nonumber\\
&& \hspace*{-0.8cm}
\times
[(\lambda+\mu)z]^{N-j-1}\mu^j(1-z)^j
{\cal L}_{\eta}[p(0,t)]{\cal L}_{\eta+\xi+\lambda+\mu}\left[(1- e^{-t(\lambda+\mu)})^j\left(\frac{\lambda z+\mu}{\lambda(1-z)}+e^{-t(\lambda+\mu)}\right)^N\right]
\nonumber\\
&& \hspace*{-0.8cm}
+\frac{\xi \lambda^N (1-z)^N}{(\lambda+\mu)^{2N}z^N}\sum_{j=0}^N {N \choose j}(\lambda+\mu)^{N-j}z^{N-j}\mu^j(1-z)^j
\frac{1}{\eta}{\cal L}_{\eta+\xi}\left[(1- e^{-t(\lambda+\mu)})^j\left(\frac{\lambda z+\mu}{\lambda(1-z)}+e^{-t(\lambda+\mu)}\right)^N\right].
\nonumber
\end{eqnarray}
Recalling Eq. (\ref{lapl_trans_p0}) and by making use of Equation (28) of \cite{Prudnikov}, after some calculations we get 
\begin{eqnarray}
{\cal L}_\eta[F(z,t)]&=&\frac{\lambda^N(1-z)^N}{z^N(\lambda+\mu)^{2N+1}}\sum_{j=0}^N {N \choose j}[(\lambda+\mu)z]^{N-j}\mu^j(1-z)^j\nonumber\\
&\times& Beta\left(j+1,\frac{\eta+\xi}{\lambda+\mu}\right)\left(\frac{\lambda z+\mu}{\lambda(1-z)}\right)^N{}_{2}F_{1}\left(\frac{\eta+\xi}{\lambda+\mu},-N,\frac{\eta+\xi}{\lambda+\mu}+j+1;-\frac{\lambda(1-z)}{\lambda z+\mu}\right)\nonumber\\
&-&\frac{N\mu(1-z)^{N+1}\lambda^N}{(\lambda+\mu)^{2N}z^N}\sum_{j=0}^{N-1}{N-1 \choose j}
[(\lambda+\mu)z]^{N-j-1}\mu^j(1-z)^j{\cal L}_{\eta}[p(0,t)]\nonumber\\
& \times&Beta\left(j+1,\frac{\eta+\xi}{\lambda+\mu}+1\right)\left(\frac{\lambda z+\mu}{\lambda(1-z)}\right)^N{}_{2}F_{1}\left(\frac{\eta+\xi}{\lambda+\mu}+1,-N,\frac{\eta+\xi}{\lambda+\mu}+j+2;-\frac{\lambda(1-z)}{\lambda z+\mu}\right)\nonumber\\
&+&\frac{\xi \lambda^N (1-z)^N}{(\lambda+\mu)^{2N+1}z^N}\sum_{j=0}^N {N \choose j}(\lambda+\mu)^{N-j}z^{N-j}\mu^j(1-z)^j\nonumber\\
&\times& \frac{1}{\eta} Beta\left(j+1,\frac{\eta+\xi}{\lambda+\mu}\right)\left(\frac{\lambda z+\mu}{\lambda(1-z)}\right)^N{}_{2}F_{1}\left(\frac{\eta+\xi}{\lambda+\mu},-N,\frac{\eta+\xi}{\lambda+\mu}+j+1;-\frac{\lambda(1-z)}{\lambda z+\mu}\right),\nonumber
\end{eqnarray}
where $Beta(z_1,z_2)$ denotes the Beta function. Finally, recalling that $F(z)=\lim_{\eta \rightarrow 0} \eta {\cal L}_\eta [F(z,t)]$ due the Tauberian theorem, the proof finally follows.

\section{Proof of Proposition \ref{prob_asympt_l_equal_m}}
\label{proof_prob_asympt_l_equal_m}
%
From Eq. (\ref{F_asympt}), by assuming $\lambda=\mu$, after some calculations we obtain
%
\begin{eqnarray}
&& \hspace*{-1cm}
F(z)= \sum_{h=0}^{N} z^{N - h} \left(\frac{1}{2}\right)^{N - h} \frac{N!}{h! \left( 1 + \frac{\xi}{2\lambda} \right)_{N-h}} \, {}_2F_1 \left( -h, 1 + N - h, 1 + N - h + \frac{\xi}{2 \lambda}, \frac{1}{2} \right)
+ \left( \frac{1}{4} \right)^N \sum_{h=0}^{2 N} z^{N - h} \binom{2N}{h} 
\nonumber 
\\
&& \hspace*{-1cm}
\times \sum_{k=0}^{N} {N \choose k} (-1)^k \left( 1 - \frac{k}{k + \frac{\xi}{4\lambda}} \right) {}_2F_1 \left( -2k, -h, -2N, 2 \right)
- \left( \frac{1}{2} \right)^N \sum_{h=0}^{N} z^{N - h} \binom{N}{h} \sum_{k=0}^{N} {N \choose k}  (-1)^k 
\nonumber\\
&& \hspace*{-1cm}
\times  \left( 1 - \frac{k}{k + \frac{\xi}{2\lambda}} \right) {}_2F_1 \left( -k, -h, -N, 2 \right)
- \frac{\Gamma \left( 1 + \frac{\xi}{4\lambda} \right) \Gamma \left( N + \frac{1}{4} \left( 2 + \frac{\xi}{\lambda} \right) \right)}{\Gamma \left( 1 + N + \frac{\xi}{4\lambda} \right) \Gamma \left( \frac{1}{4} \left( 2 + \frac{\xi}{\lambda} \right) \right) + \Gamma \left( 1 + \frac{\xi}{4\lambda} \right) \Gamma \left( N + \frac{1}{4} \left( 2 + \frac{\xi}{\lambda} \right) \right)}
\nonumber\\
&& \hspace*{-1cm}
\times \left( \frac{1}{4} \right)^N \left[\frac{1}{1 + \frac{\xi}{4\lambda}} \sum_{h=0}^{2N} z^{N-h} \binom{2N}{h} \sum_{k=0}^{N-1} \frac{N!}{k!(N-1-k)!} (-1)^k \left( 1 - \frac{k}{\frac{\xi}{4\lambda} + 1 + k} \right) {}_2F_1 \left( -2(1 + k), -h, -2N, 2 \right)\right.
\nonumber\\
&& \hspace*{-1cm}
-\left. \frac{2}{1 + \frac{\xi}{2\lambda}} \sum_{h=0}^{2N} z^{N-h} \binom{2N}{h} \sum_{k=0}^{N-1} \frac{N!}{k!(N-1-k)!} (-1)^k \left( 1 - \frac{k}{\frac{\xi}{4\lambda} + \frac{1}{2} + k} \right) {}_2F_1 \left( -1 - 2k, -h, -2N, 2 \right)\right].
\label{F_asymp_lambda_equal_mu}
\end{eqnarray}

By considering the coefficient of $z^0$ we have
\begin{eqnarray}
&& \hspace*{-0.8cm}
\rho(0)={}_2F_1\left( 1, -N, 1 + \frac{\xi}{2\lambda}, \frac{1}{2} \right)
+ \left( \frac{1}{4} \right)^N \binom{2N}{N} \sum_{k=0}^{N} {N \choose k} (-1)^k \left( 1 - \frac{k}{k + \frac{\xi}{4\lambda}} \right)
\frac{ 2^{2N} N! \, \left( \frac{1 - 2k}{2}\right)_N}{(2N)!}
\nonumber\\
&& \hspace*{-1cm}
- \left( \frac{1}{2} \right)^N \sum_{k=0}^{N} {N \choose k} \left( 1 - \frac{k}{k + \frac{\xi}{2\lambda}} \right)
- \frac{\Gamma \left( 1 + \frac{\xi}{4\lambda} \right) \Gamma \left( N + \frac{1}{4} \left( 2 + \frac{\xi}{\lambda} \right) \right)}{\Gamma \left( 1 + N + \frac{\xi}{4\lambda} \right) \Gamma \left( \frac{1}{4} \left( 2 + \frac{\xi}{\lambda} \right) \right) + \Gamma \left( 1 + \frac{\xi}{4\lambda} \right) \Gamma \left( N + \frac{1}{4} \left( 2 + \frac{\xi}{\lambda} \right) \right)}
\nonumber\\
&& \hspace*{-1cm}
\times 
\left( \frac{1}{4} \right)^N\left[ \frac{1}{1 + \frac{\xi}{4\lambda}} \binom{2N}{N} \sum_{k=0}^{N-1} \frac{N!}{k!(N-1-k)!} (-1)^k \left( 1 - \frac{k}{\frac{\xi}{4\lambda} + 1 + k} \right)
\frac{ 2^{2N} N! \, \left( \frac{1 - 2(1+k)}{2} \right)_N }{(2N)!}\right.
\nonumber\\
&& \hspace*{-1cm}
-\left. \frac{2}{1 + \frac{\xi}{2\lambda}} \binom{2N}{N} \sum_{k=0}^{N-1} \frac{N!}{k!(N-1-k)!} (-1)^k \left( 1 - \frac{k}{\frac{\xi}{4\lambda} + \frac{1}{2} + k} \right)
\frac{ 2^{2N} N! \, \left( -k \right)_N }{(2N)!}\right].
\nonumber
\label{rho_l_equal_mu_1}
\end{eqnarray}
The evaluation of the involved summations yields
$$
\rho(0)={}_2F_1\left( 1, -N, 1 + \frac{\xi}{2\lambda}, \frac{1}{2} \right) 
+ \frac{2}{1 + \frac{\left( 1 + \frac{\xi}{4\lambda} \right)_N}{\left( \frac{1}{4} \left( 2 + \frac{\xi}{\lambda} \right)\right)_N}}- 1 \nonumber\\
+ \frac{2\lambda N}{2\lambda + \xi}\cdot {}_2F_1\left( 1, 1 + \frac{\xi}{2\lambda} + N, 2 + \frac{\xi}{2\lambda}, -1 \right),
$$
so that Eq. (\ref{prob_asympt_0_l_equal_m}) follows after some computations.
\par
For $r=1,2,\ldots,N$, we consider the coefficient of $z^r$ in (\ref{F_asymp_lambda_equal_mu}), so that

\begin{eqnarray}
&& \hspace*{-1cm}
\rho(r)=\left(\frac{1}{2}\right)^r\frac{N!}{(N - r)! \,\left( 1 + \frac{\xi}{2\lambda} \right)_r} \, 
{}_2F_1\left( 1 + r, r - N, 1 + r + \frac{\xi}{2\lambda}, \frac{1}{2} \right) 
\label{rho_l_equal_mu_2}
\\
&& \hspace*{-1cm}
-\left( \frac{1}{2} \right)^N \binom{N}{N - r} 
\sum_{k=0}^{N} {N \choose k} (-1)^k \left( 1 - \frac{k}{k + \frac{\xi}{2\lambda}} \right) {}_2F_1\left( -k,r-N, -N, 2 \right)
\nonumber\\
&& \hspace*{-1cm}
+ \left( \frac{1}{4} \right)^N \binom{2N}{N - r} 
\sum_{k=0}^{N} {N \choose k} (-1)^k \left( 1 - \frac{k}{k + \frac{\xi}{4\lambda}} \right) {}_2F_1\left( -2k, r-N, -2N, 2 \right)
\nonumber\\
&& \hspace*{-1cm}
-\left(\frac{1}{4} \right)^N \frac{\Gamma\left( 1 + \frac{\xi}{4\lambda} \right) \Gamma\left( N+ \frac{1}{4} \left(2 + \frac{\xi}{\lambda} \right)\right)}{\Gamma\left( 1 + N + \frac{\xi}{4\lambda} \right) \Gamma\left( \frac{1}{4} \left( 2 + \frac{\xi}{\lambda} \right) \right) 
    + \Gamma\left( 1 + \frac{\xi}{4\lambda} \right) \Gamma\left( N+ \frac{1}{4} \left( 2 + \frac{\xi}{\lambda} \right) \right)}
    \nonumber\\
    && \hspace*{-1cm}
    \times \frac{1}{1 + \frac{\xi}{4\lambda}} \binom{2N}{N - r} 
\sum_{k=0}^{N-1} \frac{N!}{k! (N - 1 - k)!} (-1)^k \left( 1 - \frac{k}{\frac{\xi}{4\lambda} + 1 + k} \right) 
{}_2F_1\left( -2 (1 + k), r-N, -2N, 2 \right)
\nonumber\\
&& \hspace*{-1cm}
+ \left( \frac{1}{4} \right)^N 
\frac{\Gamma\left( 1 + \frac{\xi}{4\lambda} \right) \Gamma\left( N + \frac{1}{4} \left( 2 + \frac{\xi}{\lambda} \right) \right)}{
    \Gamma\left( 1 + N + \frac{\xi}{4\lambda} \right) \Gamma\left( \frac{1}{4} \left( 2 + \frac{\xi}{\lambda} \right) \right) 
    + \Gamma\left( 1 + \frac{\xi}{4\lambda} \right) \Gamma\left( N+ \frac{1}{4} \left( 2 + \frac{\xi}{\lambda} \right) \right)
    }
\nonumber\\
&& \hspace*{-1cm}
\times \frac{2}{1 + \frac{\xi}{2\lambda}} \binom{2N}{N - r} 
\sum_{k=0}^{N-1} \frac{N!}{k! (N - 1 - k)!} (-1)^k \left( 1 - \frac{k}{\frac{\xi}{4\lambda} + \frac{1}{2} + k} \right) 
{}_2F_1\left( -1 - 2k, r-N, -2N, 2 \right).
\nonumber
\end{eqnarray}
Let us consider the first sum in (\ref{rho_l_equal_mu_2}). Denoting by ${\cal P}_n^{(a,b)}(x)$ the Jacobi polynomial, 
due to Eq. (7.3.1.142) of \cite{Prudnikov3} and recalling Eq. (45.2.1) of \cite{Hansen}, we obtain
\begin{eqnarray*}
&& \hspace*{-1cm}
\sum_{k=0}^{N}{N \choose k} (-1)^k \frac{\xi}{2 k \lambda + \xi} \; {}_2F_1\left( -k, r-N, -N, 2 \right)
=\sum_{k=0}^{N}{N \choose k} (-1)^k \frac{\xi}{2 k \lambda + \xi}\,\frac{2^k \, k! \, {\cal P}_k^{( -k - r + N, -k + r)}(0)}{(-N)_k}
\nonumber\\
&& \hspace*{-1cm}
=\frac{\frac{\xi}{2L} \, \Gamma(1 + r) \, \Gamma\left(\frac{\xi}{2\lambda}\right)}{\Gamma\left(1 + r + \frac{\xi}{2\lambda}\right)} \, {}_2F_1\left(r- N, \frac{\xi}{2\lambda}, 1 + r + \frac{\xi}{2\lambda}, -1\right).
\label{sum1_fin}
\end{eqnarray*}
For the second sum in (\ref{rho_l_equal_mu_2}), by making use of Eqs. (5.14.2.12) and (2.21.1.4) of \cite{Prudnikov3}, we have
\begin{eqnarray*}
&& \hspace*{-1cm}
\sum_{k=0}^{N} \binom{N}{k} (-1)^k \, \frac{\xi}{4 k \lambda + \xi} \, {}_2F_1\left(-2 k, -N + r, -2 N, 2\right)
\nonumber\\
&& \hspace*{-1cm}
=\frac{\xi}{4\lambda}\int_0^1 y^{\frac{\xi}{4\lambda} - 1} \sum_{k=0}^{N} \frac{(2k)!}{\left(\frac{1}{2} - n\right)_k k!}   \left(\frac{y}{4}\right)^k \, {\cal P}_{2k}^{(-1 - 2N, -r + N - 2k)}( -3)\,\, {\rm d}y
\nonumber\\
&& \hspace*{-1cm}
=\frac{\xi}{4\lambda} \int_0^1 y^{\frac{\xi}{4\lambda} - 1} (1 + \sqrt{y})^{2N - r} (1 - \sqrt{y})^r \, {}_2F_1\left(r - N, -N, -2N, \frac{4\sqrt{y}}{(1 + \sqrt{y})^2}\right) \, {\rm d}y
\nonumber\\
&& \hspace*{-1cm}
=\frac{\xi}{4\lambda} \, \text{Beta}\left( \frac{\xi}{4\lambda}, r + 1 \right) \, {}_3F_2\left( \left\{ r - N, \frac{1}{2} + r, \frac{\xi}{4\lambda} \right\}, \left\{ \frac{1}{2} - N, \frac{\xi}{4\lambda} + r + 1 \right\}, 1 \right).
\label{sum2_fin}
\end{eqnarray*}
Similarly for the third sum in (\ref{rho_l_equal_mu_2}), we have:
\begin{eqnarray*}
&&\sum_{k=1}^{N} \frac{N!}{(k-1)!(N-k)!} (-1)^k \frac{1}{k + \frac{\xi}{4\lambda}} \, {}_2F_1\left(-2k, r-N, -2N, 2\right)
\nonumber\\
&&=-\frac{\xi}{4\lambda} \, \text{Beta}\left( \frac{\xi}{4\lambda}, r + 1 \right) \, {}_3F_2\left( \left\{ r - N, \frac{1}{2} + r, \frac{\xi}{4\lambda} \right\}, \left\{ \frac{1}{2} - N, \frac{\xi}{4\lambda} + r + 1 \right\}, 1 \right).
\label{sum3_fin}
\end{eqnarray*}
Finally, we consider the fourth sum in (\ref{rho_l_equal_mu_2}), for which, by making use of Eqs. (5.14.2.12) and (2.21.1.4) of \cite{Prudnikov3}, Eq. (109) of \cite{Brychkov2013} and Eq. (106) of \cite{Srivastava}, we obtain
\begin{eqnarray*}
&&\hspace{-0.8cm}
\sum_{k=1}^{N} \frac{N!}{(k-1)!(N-k)!} (-1)^k \frac{1}{k + \frac{\xi}{4\lambda} - \frac{1}{2}} \, {}_2F_1\left(1 - 2k, r-N, -2N, 2\right)
\\
&&\hspace{-0.8cm}
=-\sum_{k=1}^{N} \frac{2^{-2 + 2k}\left(\frac{1}{2}\right)_k}{\left(\frac{1}{2} - N\right)_k}  \frac{ 2N - 2k + 1}{k + \frac{\xi}{4\lambda} - \frac{1}{2}} \, {\cal P}_{2k-1}^{(1 - 2k - r+ N, 1 - 2k + r + N)} (0)
\nonumber\\
&&\hspace{-0.8cm}
=-r \int_0^1 y^{\frac{\xi}{4\lambda} - \frac{1}{2}}  
 (1 - \sqrt{y})^{-1 -r}(1 + \sqrt{y})^{-1 +r+2N} 
  {}_2F_1\left( -r - N, -N, -2 N, \frac{4 \sqrt{y}}{\left( 1 + \sqrt{y} \right)^2} \right)\,\, {\rm d}y
\nonumber\\
&&\hspace{-0.8cm}
=-r \, \text{Beta}\left( \frac{\xi}{4 \lambda} + \frac{1}{2}, r \right) \, {}_3F_2\left( \left\{ \frac{1}{2} + r, -N + r, \frac{\xi}{4 L} + \frac{1}{2} \right\}, \left\{ \frac{1}{2} - N, \frac{\xi}{4 \lambda} + r + \frac{1}{2} \right\}, 1 \right).
\end{eqnarray*}
Hence, the proof follows from straightforward calculations. 
\section{Approximation of $g(\lambda,\mu,\xi,N)$}
\label{proof_lemma_appr_g}
To achieve the result in Proposition \ref{Prop_lim_mean_var}, we first require an assessment of the behavior of $g(\lambda,\mu,\xi,N)$ for large $N$, which necessitates the approximation detailed in the following lemma.

\begin{lemma}
	If $\lambda<\mu$, the function $g(\lambda,\mu,N)$ defined in Eq. (\ref{g_1}) can be approximated for large $N$ as
	\begin{equation}
		\label{g_approx}
		\tilde g(\lambda,\mu,\xi,N)\approx \frac{c_5(\lambda,\mu,\xi)N^5+c_4(\lambda,\mu,\xi)N^4+c_3(\lambda,\mu,\xi)N^3+c_2(\lambda,\mu,\xi)N^2+c_1(\lambda,\mu,\xi)N}{d_5(\lambda,\mu,\xi)N^5+d_4(\lambda,\mu,\xi)N^4+d_3(\lambda,\mu,\xi)N^3+d_2(\lambda,\mu,\xi)N^2+d_1(\lambda,\mu,\xi)N},
	\end{equation}
	where the coefficients $c_i(\lambda,\mu,\xi)$ and $d_i(\lambda,\mu,\xi)$, $i=1,\ldots,5$, are given in Eqs. (\ref{coff_num_g}) and (\ref{coff_num_den_g}), respectively.
	\label{lemma_appross_g}
\end{lemma}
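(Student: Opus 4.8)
The plan is to reduce $g$ to a ratio of two Laplace-type integrals and then extract its large-$N$ expansion by the Laplace (Watson) method at an endpoint. Writing $\beta:=\frac{\xi}{\lambda+\mu}$ and $r:=\lambda/\mu<1$, both Gauss functions in (\ref{g_1}) terminate (their second parameter is $-N$), so after the symmetry ${}_2F_1(a,b,c;z)={}_2F_1(b,a,c;z)$ I would apply Euler's integral representation with the positive parameter ($\beta$, resp.\ $\beta+1$) playing the role of the integration exponent. Using $\Gamma(\beta+1)/\Gamma(\beta)=\beta$ and the cancellation of the common factor $\Gamma(\beta+N+1)$, this yields
\begin{equation*}
g(\lambda,\mu,\xi,N)=\frac{\beta}{N}\,\frac{\displaystyle\int_0^1 t^{\beta-1}(1-t)^{N}\left(1+rt\right)^{N}{\rm d}t}{\displaystyle\int_0^1 t^{\beta}(1-t)^{N-1}\left(1+rt\right)^{N}{\rm d}t}.
\end{equation*}

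Next I would analyse the common phase $\psi(t):=(1-t)(1+rt)$ on $[0,1]$. Since $\psi'(0)=r-1<0$ under the hypothesis $\lambda<\mu$, the only interior critical point $t^{\ast}=\frac{r-1}{2r}$ is negative, so $\psi$ attains its maximum value $\psi(0)=1$ at the left endpoint. This is precisely the setting for Watson's lemma: the substitution $u=-\log\psi(t)$, a smooth increasing change of variable near $0$ with $u\sim(1-r)t$, turns each integral into $\int_0^{\cdot}\phi(u)\,e^{-Nu}\,{\rm d}u$, whose asymptotics are governed by the Taylor expansion of the algebraic weight $\phi$ about $u=0$. Watson's lemma then presents each integral as $\bigl(N(1-r)\bigr)^{-\beta}$ (resp.\ $\bigl(N(1-r)\bigr)^{-\beta-1}$) times an asymptotic series $\sum_{m\ge 0}a_m/N^{m}$, with the $a_m$ explicit in $\beta$ and $r$; the leading orders already reproduce $g\to\frac{\mu-\lambda}{\mu}$, which is the value driving Proposition \ref{Prop_lim_mean_var}.

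Forming the quotient of the two expansions and multiplying by $\beta/N$ expresses $g$ as $\frac{\mu-\lambda}{\mu}$ times a ratio of truncated power series in $1/N$. Carrying the expansion to the order needed to control the first corrections and then clearing the common factor $N^{-5}$ from numerator and denominator recasts the quotient as the ratio of two degree-$5$ polynomials in $N$ with vanishing constant term, exactly as in (\ref{g_approx}); the coefficients $c_i,d_i$ of (\ref{coff_num_g}) and (\ref{coff_num_den_g}) are then the explicit combinations of $\lambda,\mu,\xi$ obtained by collecting the Watson coefficients. Equivalently, one may regard the stated rational function as the diagonal Pad\'e approximant in $1/N$ matching these orders.

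The main obstacle is computational rather than conceptual: generating the higher-order Watson coefficients $a_m$ (which require the Taylor data of $t\mapsto t^{\beta}\,{\rm d}t/{\rm d}u$ to several orders) and the bookkeeping that collapses the quotient of two such series into a single rational function with the explicit $c_i,d_i$. One must also track the mismatch in the power of $(1-t)$ between the two integrals — a benign smooth weight near the endpoint — and verify that the truncation keeps enough orders of $1/N$ so that, upon insertion into $E[{\cal N}]=\frac{N}{\lambda+\mu+\xi}\bigl[\lambda+\mu(g-1)\bigr]$, the leading $O(N)$ contributions cancel and the finite limit $\frac{\lambda}{\mu-\lambda}$ emerges. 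Throughout, the hypothesis $\lambda<\mu$ is essential, since it is what places the maximum of $\psi$ at the endpoint and guarantees the decay $e^{-N(1-r)u}$.
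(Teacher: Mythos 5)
Your argument is correct and rests on a genuinely different mechanism than the paper's. The paper proves Lemma \ref{lemma_appross_g} by invoking Theorem 1.1 of \cite{Daalhuis}, i.e.\ the \emph{uniform} large-parameter expansion of ${}_{2}F_{1}\left(\frac{\xi}{\lambda+\mu}+i,-N,\frac{\xi}{\lambda+\mu}+N+1;-\frac{\lambda}{\mu}\right)$ in terms of parabolic cylinder functions $D_{r}(-\alpha(\lambda,\mu)\sqrt{N})$ with explicitly tabulated coefficients $\gamma^{i}_{kj}$, and then substitutes the large-argument expansion of $D_{r}(z)$ to turn numerator and denominator into the degree-$5$ polynomials of (\ref{coff_num_g})--(\ref{coff_num_den_g}). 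You instead exploit the termination of both series to write $g=\frac{\beta}{N}\int_0^1 t^{\beta-1}(1-t)^N(1+rt)^N\,{\rm d}t\big/\int_0^1 t^{\beta}(1-t)^{N-1}(1+rt)^N\,{\rm d}t$ (the Euler representation and the $\Gamma$-factor bookkeeping check out), and apply Watson's lemma at the endpoint $t=0$, which is the dominant point precisely because $\lambda<\mu$ pushes the critical point of $(1-t)(1+rt)$ to $t^{\ast}<0$. Your route is more elementary (no special-function machinery beyond the Gamma function) and transparently explains both the hypothesis $\lambda<\mu$ and the limit $g\to(\mu-\lambda)/\mu$; what it loses is uniformity as $\lambda\uparrow\mu$, where the endpoint and the saddle coalesce --- exactly the degeneration that the parabolic cylinder functions in the paper's approach are designed to absorb (note $\alpha(\lambda,\mu)\to 0$ there). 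Two caveats. First, since asymptotic expansions in powers of $1/N$ are unique, your series must agree with the paper's, but your Pad\'e-style repackaging will not literally reproduce the specific coefficients $c_i,d_i$ of (\ref{coff_num_g})--(\ref{coff_num_den_g}); it yields an asymptotically equivalent rational function, and identifying the two requires matching normalizations --- a presentational rather than mathematical discrepancy, though the lemma as stated points to those particular formulas. Second, to actually deliver the lemma (and Proposition \ref{Prop_lim_mean_var}) you must carry the Watson expansion through several correction orders --- in particular the $O(1/N)$ term of $g$ is what produces the finite limit $\lambda/(\mu-\lambda)$ of $E[{\cal N}]$ --- and that computation, which you correctly flag as the real labour, is not performed in the proposal.
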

\begin{proof}
Recalling Theorem 1.1 of Daalhuis \cite{Daalhuis}, if $\lambda<\mu$ and $N$ is large, we have
	\begin{eqnarray}
		&&{}_{2}F_{1}\left(\frac{\xi}{\lambda+\mu}+i,-N,\frac{\xi}{\lambda+\mu}+N+1;-\frac{\lambda}{\mu}\right) \approx \frac{2^N \left(1+\frac{\lambda}{\mu}\right)^{N-\left(\frac{\xi}{\lambda+\mu}+i\right)}\Gamma\left(\frac{\xi}{\lambda+\mu}+N+1\right)\Gamma(N+1)}{\left(\frac{\lambda}{\mu}\right)^{N/2}\Gamma\left(\frac{\xi}{\lambda+\mu}+2N+1\right)\sqrt{2\pi}}\nonumber\\
		&&\times \left[N^{\frac{1}{2}\left(\frac{\xi}{2(\lambda+\mu)}-1+i\right)}D_{-\frac{\xi}{\lambda+\mu}-i}\left(-\alpha(\lambda, \mu)\sqrt{N}\right)\left(\gamma_{00}^{i}+\frac{\gamma_{01}^{i}}{n}\right)
		\right.\nonumber\\
		&&\left.+
		N^{\frac{1}{2}\left(\frac{\xi}{2(\lambda+\mu)}-2+i\right)}D_{-\frac{\xi}{\lambda+\mu}+1-i}\left(-\alpha(\lambda, \mu)\sqrt{N}\right)\left(\gamma_{10}^{i}+\frac{\gamma_{11}^{i}}{n}\right)
		\right],\quad i=0,1,\nonumber
	\end{eqnarray}
	where $D_r(z)$ is the parabolic cylinder function, $\alpha(\lambda, \mu):=- \sqrt{2 \log\left( \frac{\left(\lambda+\mu\right)^2}{4 \lambda \mu} \right)}$, and the coefficients 
	$\gamma_{kj}^{i}$ ($i,j,k=0,1$) are given by
	\begin{eqnarray}
		&&\gamma_{00}^{0}(\lambda, \mu, \xi)=\left(\frac{ \lambda}{\mu}+1\right)\left(\frac{\lambda^2-\mu^2}{\lambda \,\mu\, \alpha(\lambda,\mu)}\right)^{\frac{\xi}{\lambda+\mu}}\left(\frac{\alpha(\lambda,\mu)\mu}{\lambda-\mu}\right),\nonumber\\
		&&\gamma_{01}^{0}(\lambda, \mu, \xi)=\frac{\lambda + \mu - \xi}{(\lambda^2 - \mu^2)\alpha(\lambda, \mu)} 
		\left[
		\left( 
		\frac{\lambda^2 - \mu^2}{\alpha(\lambda, \mu)\lambda \mu}
		\right)^{\frac{\xi}{\lambda + \mu}} 
		\frac{
			(\alpha(\lambda, \mu))^2	\left( 4\lambda\mu(\lambda + \mu) - (\lambda^2 + \mu^2)\xi \right)
			+ (\lambda - \mu)^2 \xi
		}{
			2(\lambda - \mu)^2
		}\right.\nonumber\\
		&&\hspace{1cm}\left.-
		2^{\frac{1}{2} + \frac{\xi}{\lambda + \mu}}  
		\frac{\lambda - \mu}{\alpha(\lambda, \mu)} 
		\left( 
		\frac{\alpha(\lambda, \mu)(\lambda + \mu)}{\lambda - \mu}
		\right)^{\frac{\xi}{\lambda + \mu}} 
		\right],
		\nonumber\\
		&&\gamma_{10}^{0}(\lambda, \mu, \xi)=\frac{(\lambda + \mu)^{\frac{\xi}{\lambda + \mu}}}{\alpha(\lambda, \mu)}
		\left[
		\left(\frac{(\lambda+\mu)\alpha(\lambda,\mu)}{\lambda-\mu}\right) \left( 
		\frac{\lambda - \mu}{\alpha(\lambda, \mu)\lambda \mu } 
		\right)^{\frac{\xi}{\lambda + \mu}}
		- 
		2^{\frac{\xi}{\lambda + \mu} + \frac{1}{2}} 
		\left( 
		\frac{\alpha(\lambda, \mu)}{\lambda - \mu} 
		\right)^{\frac{\xi}{\lambda + \mu} }
		\right],\nonumber\\
		&&\gamma_{11}^{0}(\lambda, \mu, \xi)=\frac{1}{2 \sqrt{2} \, \alpha(\lambda, \mu) (\lambda - \mu)^3 (\lambda + \mu)^2}\nonumber\\
		&&\hspace{1cm}\times\left[
		\left( \frac{2(\lambda + \mu)\alpha(\lambda, \mu)}{\lambda - \mu} \right)^{\frac{\xi}{\lambda + \mu}} \right.\nonumber\\
		&&\hspace{1cm}\times
		\left( 
		\frac{2(\lambda - \mu)^3(\lambda + \mu - \xi)(-2(\lambda + \mu) + \xi)}{\alpha(\lambda, \mu)^2}
		+ 
		\frac{(\lambda - \mu)(\lambda + \mu)^2((\lambda - \mu)^2 + 2(\lambda + \mu)\xi + 2\xi^2)}{2}
		\right)\nonumber\\
		&&\hspace{1cm}\left.+ \sqrt{2} (\lambda + \mu) 
		\left( \frac{\lambda^2 - \mu^2}{\alpha(\lambda, \mu) \lambda \mu} \right)^{\frac{\xi}{\lambda + \mu}} 
		\left( 
		\alpha(\lambda, \mu)(\lambda + \mu - \xi)(4\lambda \mu (\lambda + \mu) - (\lambda^2 + \mu^2)\xi)
		- 
		\frac{(\lambda - \mu)^2 \xi (\lambda + \mu + \xi)}{\alpha(\lambda, \mu)}
		\right)
		\right],
		\nonumber
		\end{eqnarray}
		and 
		\begin{eqnarray}
		&&\gamma_{00}^{1}(\lambda, \mu, \xi)=\left(\frac{ \lambda}{\mu}+1\right)\left(\frac{\lambda^2-\mu^2}{\lambda \,\mu\, \alpha(\lambda,\mu)}\right)^{\frac{\xi}{\lambda+\mu}}\nonumber\\
		&&\gamma_{01}^{1}(\lambda, \mu, \xi)=\frac{\xi}{(\lambda - \mu)\alpha(\lambda, \mu)}\nonumber\\
		&&\hspace{1cm}\times \left[
		\frac{
			(\alpha(\lambda, \mu))^2 \left( 
			\lambda^3 - 3\lambda\mu^2 + (\lambda^2+\mu^2)( \xi-\mu ) 
			\right)
			- (\lambda - \mu)^2 (\lambda + \mu + \xi)
		}{
			2(\lambda^2 - \mu^2)\mu \alpha(\lambda, \mu)
		}
		\left( 
		\frac{\lambda^2 - \mu^2}{\alpha(\lambda, \mu)\lambda\mu}
		\right)^{\frac{\xi}{\lambda + \mu}} 
		\right.\nonumber\\
		&&\hspace{1cm}\left.+ 
		2^{\frac{1}{2} + \frac{\xi}{\lambda + \mu}} 
		\left( 
		\frac{\alpha(\lambda, \mu)(\lambda + \mu)}{\lambda - \mu}
		\right)^{\frac{\xi}{\lambda + \mu}}\right],
		\nonumber\\
		&&\gamma_{10}^{1}(\lambda, \mu, \xi)=\frac{(\lambda + \mu)^{\frac{\xi}{\lambda + \mu} + 1}}{\alpha(\lambda, \mu)}
		\left[
		\frac{1}{\mu} \left( 
		\frac{\lambda - \mu}{\alpha[\lambda, \mu]\lambda \mu } 
		\right)^{\frac{\xi}{\lambda + \mu}}
		- 
		2^{\frac{\xi}{\lambda + \mu} + \frac{1}{2}} 
		\left( 
		\frac{\alpha(\lambda, \mu)}{\lambda - \mu} 
		\right)^{\frac{\xi}{\lambda + \mu} + 1}
		\right],\nonumber\\
		&&\gamma_{11}^{1}(\lambda, \mu, \xi)=\frac{1}{2 \alpha(\lambda, \mu)(\lambda + \mu)} 
		\left[
		\left( \frac{\alpha(\lambda, \mu)(\lambda + \mu)}{\lambda - \mu} \right)^{\frac{\xi}{\lambda + \mu}} 
		\frac{1}{\lambda - \mu}\right.\nonumber\\
		&&\hspace{1cm}\left.\times
		\left(
		\frac{
			2^{\frac{1}{2} + \frac{\xi}{\lambda + \mu}} (\lambda + \mu - \xi) \xi
		}{\alpha(\lambda, \mu)
		}
		+ 
		\frac{2^{-\frac{3}{2} + \frac{\xi}{\lambda + \mu}} (\lambda + \mu)^2 \alpha(\lambda, \mu) \left( (3\lambda + \mu)^2 + 2(5\lambda + \mu)\xi + 2\xi^2 \right)
		}{(\lambda - \mu)^2}\right)\right.\nonumber\\
		&&\hspace{1cm}\left.-
		\left( \frac{\lambda^2 - \mu^2}{\alpha(\lambda, \mu)\lambda \mu} \right)^{\frac{\xi}{\lambda + \mu}} 
		\frac{1}{\mu}
		\left(\frac{
			(\lambda + \mu + \xi)(2(\lambda + \mu) + \xi)
		}{(\alpha(\lambda, \mu))^2}
		+
		\frac{
			\xi\left( -\lambda^3 + 3\lambda\mu^2 + (\lambda^2 + \mu^2)(\mu - \xi) \right)
		}{
			(\lambda - \mu)^2
		}
		\right)
		\right].
		\nonumber
	\end{eqnarray}
	Hence, recalling that $D_r(z) \approx z^r e^{-\frac{z^2}{4}}\left[1-\frac{(r-1)r}{2z^2}+\frac{(r-3)(r-2)(r-1)r}{8z^4}- \frac{(r - 5)(r - 4)(r - 3)(r - 2)(r - 1)\, r}{32\, z^6}
	\right]$ for $z\rightarrow \infty$, we obtain (\ref{g_approx}) by setting
\begin{eqnarray}
	&&c_5(\lambda,\mu,\xi)=-32 \sqrt{2}\, (\lambda + \mu)^7\, \alpha(\lambda, \mu)^7 \left[
	\gamma_{00}^0(\lambda, \mu, \xi) - \alpha(\lambda, \mu) \, \gamma_{10}^0(\lambda, \mu, \xi)
	\right],
	\nonumber\\
	&&c_4(\lambda,\mu,\xi)=-16 \sqrt{2} \, (\lambda + \mu)^5 \, \alpha(\lambda, \mu)^5 \,
	\Big[
	- \xi (\lambda + \mu + \xi) \, \gamma_{00}^0(\lambda, \mu, \xi)
	+ 2 (\lambda + \mu)^2 \, \alpha(\lambda, \mu)^2 \, \gamma_{01}^0(\lambda, \mu, \xi)\nonumber \\
	&&\hspace{1cm}- \alpha(\lambda, \mu) \left(
	(\lambda + \mu - \xi)\, \xi\, \gamma_{10}^0(\lambda, \mu, \xi)
	+ 2 (\lambda + \mu)^2 \, \alpha(\lambda, \mu)^2 \, \gamma_{11}^0(\lambda, \mu, \xi)
	\right)
	\Big],
	\nonumber\\
	&&c_3(\lambda,\mu,\xi)=-4 \sqrt{2} \, (\lambda + \mu)^3 \, \xi \, \alpha(\lambda, \mu)^3 \,
	\Big[
	(\lambda + \mu + \xi)\, (2(\lambda + \mu) + \xi)\, (3(\lambda + \mu) + \xi)\, \gamma_{00}^0(\lambda, \mu, \xi),\nonumber \\
	&&\hspace{1cm}- 4 (\lambda + \mu)^2 (\lambda + \mu + \xi)\, \alpha(\lambda, \mu)^2\, \gamma_{01}^0(\lambda, \mu, \xi)\nonumber \\
	&&\hspace{1cm}+ \alpha(\lambda, \mu)\, (\lambda + \mu - \xi)
	\left(
	(\lambda + \mu + \xi)(2(\lambda + \mu) + \xi)\, \gamma_{10}^0(\lambda, \mu, \xi)
	- 4 (\lambda + \mu)^2\, \alpha(\lambda, \mu)^2\, \gamma_{11}^0(\lambda, \mu, \xi)
	\right)
	\Big],
	\nonumber\\
	&&c_2(\lambda,\mu,\xi)= \sqrt{2}\, (\lambda + \mu) \, \xi \, (\lambda + \mu + \xi) \, (2(\lambda + \mu) + \xi) \, \alpha(\lambda, \mu) \nonumber\\
	&&\hspace{1cm}\times	\Big[
	(3(\lambda + \mu) + \xi)(4(\lambda + \mu) + \xi)(5(\lambda + \mu) + \xi) \gamma_{00}^0(\lambda, \mu, \xi)\nonumber \\
	&&\hspace{1cm}	- 4 (\lambda + \mu)^2 (3(\lambda + \mu) + \xi) \alpha(\lambda, \mu)^2 \gamma_{01}^0(\lambda, \mu, \xi)\nonumber \\
	&&\hspace{1cm}	+ (\lambda + \mu - \xi) \alpha(\lambda, \mu) \big(
	(3(\lambda + \mu) + \xi)(4(\lambda + \mu) + \xi) \gamma_{10}^0(\lambda, \mu, \xi) \nonumber\\
	&&\hspace{1cm}	- 4 (\lambda + \mu)^2 \alpha(\lambda, \mu)^2 \gamma_{11}^0(\lambda, \mu, \xi)
	\big)
	\Big],\nonumber\\
	&&c_1(\lambda,\mu,\xi)=\sqrt{2} (\lambda + \mu)\, \xi\, (\lambda + \mu + \xi)\, \left(2(\lambda + \mu) + \xi\right)\, \left(3(\lambda + \mu) + \xi\right)\, \left(4(\lambda + \mu) + \xi\right)\,
	\alpha(\lambda,\mu),  \nonumber\\
	&&\hspace{1cm}\times
	\left[(5(\lambda + \mu) + \xi)\, \gamma_{01}^0(\lambda, \mu, \xi)
	+\alpha(\lambda,\mu) \, (\lambda + \mu - \xi)\,  \gamma_{11}^0(\lambda, \mu, \xi) \right],
	\label{coff_num_g}
	\end{eqnarray}
	and
	\begin{eqnarray}
	&&d_5(\lambda,\mu,\xi)=32\sqrt{2}\, \mu\, (\lambda + \mu)^6\, \alpha(\lambda, \mu)^6 \left[
	\, \gamma_{00}^1(\lambda, \mu, \xi)
	- \alpha(\lambda, \mu)\, \gamma_{10}^1(\lambda, \mu, \xi)
	\right],
	\nonumber\\
	&&d_4(\lambda,\mu,\xi)=-16 \sqrt{2}\, \mu\, (\lambda + \mu)^4\, \alpha(\lambda, \mu)^4 \,\nonumber\\
	&&\hspace{1cm}\times \Big[  (\lambda + \mu + \xi)\, (2(\lambda + \mu) + \xi)\, \gamma_{00}^1(\lambda, \mu, \xi) \nonumber\\
	&&\hspace{1cm} - 2(\lambda + \mu)^2\, \alpha(\lambda, \mu)^2\, \gamma_{01}^1(\lambda, \mu, \xi) \nonumber\\
	&&\hspace{1cm}- \alpha(\lambda, \mu) \left(
	\xi(\lambda + \mu + \xi)\, \gamma_{10}^1(\lambda, \mu, \xi)
	- 2(\lambda + \mu)^2\, \alpha(\lambda, \mu)^2\, \gamma_{11}^1(\lambda, \mu, \xi)
	\right)
	\Big],\nonumber\\
	&&d_3(\lambda,\mu,\xi)=4 \sqrt{2} \, \mu \, (\lambda + \mu)^2 \, (\lambda + \mu + \xi) \, \alpha(\lambda, \mu)^2 \,\nonumber\\
	&&\hspace{1cm}\times \Big[  (2(\lambda + \mu) + \xi)(3(\lambda + \mu) + \xi)(4(\lambda + \mu) + \xi)\, \gamma_{00}^1(\lambda, \mu, \xi), \nonumber\\
	&&\hspace{1cm} - 4 (\lambda + \mu)^2 (2(\lambda + \mu) + \xi)\, \alpha(\lambda, \mu)^2\, \gamma_{01}^1(\lambda, \mu, \xi) \nonumber\\
	&&\hspace{1cm} - \xi\, \alpha(\lambda, \mu) \left(
	(2(\lambda + \mu) + \xi)(3(\lambda + \mu) + \xi)\, \gamma_{10}^1(\lambda, \mu, \xi)
	- 4 (\lambda + \mu)^2\, \alpha(\lambda, \mu)^2\, \gamma_{11}^1(\lambda, \mu, \xi)
	\right)
	\Big],\nonumber\\
	&&d_2(\lambda,\mu,\xi)=- \sqrt{2}\, \mu(\lambda + \mu + \xi)\, (2(\lambda + \mu) + \xi)\, (3(\lambda + \mu) + \xi) \nonumber\\
	&&\hspace{1cm}\times\Big[ 
	(4(\lambda + \mu) + \xi)(5(\lambda + \mu) + \xi)(6(\lambda + \mu) + \xi)\, \gamma_{00}^1(\lambda, \mu, \xi)\nonumber\\
	&&\hspace{1cm}- 4 (\lambda + \mu)^2 (4(\lambda + \mu) + \xi)\, \alpha(\lambda, \mu)^2\, \gamma_{01}^1(\lambda, \mu, \xi)\nonumber\\
	&&\hspace{1cm}\quad - \xi\, \alpha(\lambda, \mu) \left(
	(4(\lambda + \mu) + \xi)(5(\lambda + \mu) + \xi)\, \gamma_{10}^1(\lambda, \mu, \xi)
	- 4 (\lambda + \mu)^2\, \alpha(\lambda, \mu)^2\, \gamma_{11}^1(\lambda, \mu, \xi)
	\right)
	\Big],\nonumber\\
	&&d_1(\lambda,\mu,\xi)=	- \sqrt{2}\, \mu\, 
	(\lambda + \mu + \xi)\, 
	(2(\lambda + \mu) + \xi)\, 
	(3(\lambda + \mu) + \xi)\, 
	(4(\lambda + \mu) + \xi)\, 
	(5(\lambda + \mu) + \xi) \nonumber\\
	&&\hspace{1cm}	\times	\left[
	(6(\lambda + \mu) + \xi)\, \gamma_{01}^1(\lambda, \mu, \xi)
	- \xi\, \alpha(\lambda, \mu)\, \gamma_{11}^1(\lambda, \mu, \xi)
	\right].
	\label{coff_num_den_g}
\end{eqnarray}
\end{proof}

\begin{table}[t]
	\centering
\begin{tabular}{c||l|l||l|l||l|l}
$N$ &  \multicolumn{2}{|c||}{$\xi=0$}   &\multicolumn{2}{|c||}{$\xi=0.2$} & \multicolumn{2}{|c}{$\xi=0.5$} \\
\hline
      & $g$ & ${\tilde g}$ &$g$ & ${\tilde g}$ &$g$ & ${\tilde g}$ \\
\hline
$100$	& $0.420$ &$0.422$ &$0.422$&$0.424$&$0.423$&$0.426$ \\
			\hline
$1000$	&$0.402359$&$0.402360$ &$0.402505$ &$0.402506$&$0.402724$&$0.402725$ \\
\hline
$5000$	&$0.40047833$& $0.40047834$ &$0.40050816$& $0.40050817$ &$0.40055290$&$0.40055291$\\
			\hline
		\end{tabular}
\caption{$\lambda=0.1,\,\mu=2$, $\xi=0,0.2,0.5$, for varying $N$. }
\label{tabellaapprossg}
\end{table}

In order to evaluate the goodness of the numerical approximation obtained in Lemma \ref{lemma_appross_g} for $g(\lambda,\mu,N)$, 
we compare in Table \ref{tabellaapprossg}  its exact values defined in Eq. (\ref{g_1}), with the corresponding approximated quantities
$\tilde g(\lambda,\mu,N)$ as $N$ increases and for different values of $\xi$. 
The given values confirm that the approximation improves as a $N\to \infty$ and that the approximation is satisfactory when $N$ is large.

\section{Asymptotic variance}\label{app_lim_var}
The behaviour of asymptotic variance when $\lambda<\mu$ and $N \rightarrow +\infty$  is attained from Proposition \ref{mean_var_asym},  by applying the approximation in Eq. (\ref{g_approx}); hence it results:
\begin{eqnarray}
&&\lim_{N \rightarrow +\infty} Var[{\cal N}]=\frac{1}{(d_5(\lambda, \mu, \xi))^2\, (\lambda + \mu + \xi)^2\, (2\lambda + 2\mu + \xi)}
\nonumber\\
&&\hspace{2.5cm}\times \left[
\left( (d_4(\lambda, \mu, \xi))^2 + 2\, d_3(\lambda, \mu, \xi)\, d_5(\lambda, \mu, \xi) \right) \xi (\lambda - \mu)^2 \right.
\nonumber\\
&&\hspace{2.5cm}
- \left( 2\, c_3(\lambda, \mu, \xi)\, c_5(\lambda, \mu, \xi) + (c_4(\lambda, \mu, \xi))^2 \right) \mu^2 (2\lambda + 2\mu + \xi) 
\nonumber\\
&&\hspace{2.5cm}
- \left( c_5(\lambda, \mu, \xi)\, d_4(\lambda, \mu, \xi) + c_4(\lambda, \mu, \xi)\, d_5(\lambda, \mu, \xi) \right) \mu (2\lambda + \xi)(\lambda + \mu + \xi) 
\nonumber\\
&&\hspace{2.5cm}
-  \left( c_4(\lambda, \mu, \xi)\, d_4(\lambda, \mu, \xi) +
c_3(\lambda, \mu, \xi)\, d_5(\lambda, \mu, \xi) +
c_5(\lambda, \mu, \xi)\, d_3(\lambda, \mu, \xi)
\right)2\mu(\lambda^2 - \mu^2)
\nonumber\\
&&\hspace{2.5cm}\left.+ 2\, d_4(\lambda, \mu, \xi)\, d_5(\lambda, \mu, \xi) 
\left((4 \lambda \mu + \xi^2)(\lambda + \mu) + \lambda^2 \xi + \mu^2 \xi + 6 \lambda \mu \xi
\right)\right],
\nonumber
\end{eqnarray}
where the coefficients $c_i(\lambda,\mu,\xi)$ and $d_i(\lambda,\mu,\xi)$, $i=1,\ldots,5$, are given in Eqs. (\ref{coff_num_g}) and (\ref{coff_num_den_g}), respectively.


\begin{thebibliography}{99}
%
%
%
\bibitem{Abramowitz}
M. Abramowitz, I.A. Stegun, Handbook of Mathematical Functions with Formulas, Graph, and Mathematical Tables, Dover Publications, New York, 1992.
%
\bibitem{Balaji}
S. Balaji, H. Mahmoud, Z. Tong, Phases in the diffusion of gases via the Ehrenfest urn model, J. Appl. Probab. 47 (2010) 841-855.
\bibitem{Baumann}
H. Baumann, W. Sandmann,
Steady state analysis of level dependent quasi-birth-and-death processes with catastrophes,
Comput. Oper. Res. 39 (2) (2012) 413-423.
\bibitem{Brychkov2013}
Y.A. Brychkov, N. Saad, On some formulas for the Appel function $F_2(a,b,b';c,c';w;z)$, Integral Transforms and Special Functions, 2013.
%
\bibitem{Cairns}
B. Cairns, P. K. Pollett, Extinction Times for a General Birth, Death and Catastrophe Process, J. Appl. Probab. 41 (4)  (2004) 1211–18
%
\bibitem{Daalhuis}
A.B.O. Daalhuis, Uniform asymptotic expansions for hypergeometric functions with large parameters I, Analysis and Applications 2003 01:01, 111-120.
%
\bibitem{Dharmaraja}
S. Dharmaraja, A. Di Crescenzo, V. Giorno, A.G. Nobile, A continuous-time Ehrenfest model with catastrophes and its jump-diffusion
approximation, J. Stat. Phys. 161(2) (2015) 326-345.
%
\bibitem{DiCrescenzo2008}
A. Di Crescenzo, V. Giorno, A.G. Nobile, L.M. Ricciardi, A note on birth–death processes with catastrophes, Stat. Prob. Lett. 78 (2008) 2248–2257.
%
\bibitem{DiCrescenzo2022}
A. Di Crescenzo, B. Martinucci, S. Spina, Continuous-time multi-type Ehrenfest model and related Ornstein–Uhlenbeck diffusion on a star graph, Math. Meth. Appl. Sci. 45(9) ( 2022) 5483-5512. 
%
\bibitem{DiCrescenzo2025}
A. Di Crescenzo, A. Gómez-Corral, D. Taipe,
A computational approach to extreme values and related hitting probabilities in level-dependent quasi-birth–death processes,
Math.  Comput.  Simulat. 228 (2025) 211-224.
\bibitem{Dudin}
A. Dudin, Account of Disasters in Analysis of Queueing Systems Modeled by the Quasi-Birth-and-Death-Process, In: Dudin, A., Nazarov, A., Moiseev, A. (eds) Information Technologies and Mathematical Modelling. Queueing Theory and Applications. ITMM 2022. Communications in Computer and Information Science, vol 1803. Springer, Cham.(2023)
\bibitem{Flegg}
M.B. Flegg, P.K. Pollett, D.K. Gramotnev, Ehrenfest model for condensation and evaporation processes in degrading aggregates with multiple bonds, Phys. Rev. E. (2008) 78:031117.
%
\bibitem{Giorno2012}
V. Giorno, A.G. Nobile, R. Di Cesare, On the reflected Ornstein-Uhlenbeck process with catastrophes, Appl. Math.  Comput. 218 (2012) 11570-11582.
%
\bibitem{Giorno2023}
Giorno V., Nobile, A.G., Exact solutions and asymptotic behaviors for the reflected Wiener, Ornstein-Uhlenbeck and Feller diffusion processes,  Math. Biosci. Eng. 20(8) (2023) 13602–13637.
%
\bibitem{Gomez2020}
A. Gómez-Corral, M. López-García, M.J. Lopez-Herrero, D. Taipe, On First-Passage Times and Sojourn Times in Finite QBD Processes and Their Applications in Epidemics, Mathematics 8(10) (2020) 1718.
%
\bibitem{Hansen}	
E. R. Hansen, A Table of Series and Products, Prentice-Hall, Englewood Cliffs, N.J.
\bibitem{Latouche2002}
G. Latouche, P. Taylor, Truncation and augmentation of level-independent QBD processes,
Stochastic Process. Appl., 99(1) (2002)  53-80.
\bibitem{Latouche}
G. Latouche, V. Ramaswami, Introduction to Matrix Analytic Methods in Stochastic Modeling, Society for Industrial and Applied Mathematics, 1999.
%
\bibitem{Nasri2016}
R. Nasri, Product of parabolic cylinder functions involving Laplace transforms of confluent hypergeometric functions, Integral Transforms Spec. Funct. 27(3), 181–196.
%
\bibitem{Ost}
A. Ost, Quasi-Birth-and-Death Processes, In: Performance of Communication Systems. Springer, Berlin, Heidelberg, 2001
%
\bibitem{Srivastava}	
H. M. Srivastava,  P.W. Karlsson, Multiple Gaussian hypergeometric series, Ellis Horwood Series Mathematics and Its Applications, John Wiley \& Sons Ltd, 1985.
%
\bibitem{Prudnikov}
A.P. Prudnikov, Y.A. Brychkov, O.I. Marichev, Integrals and Series, Direct Laplace Transforms, Routledge, London, 1992.
%
\bibitem{Prudnikov3}	
A.P. Prudnikov, Y.A. Brychkov, O.I. Marichev, Integrals and Series Vol. 3: More Special functions,
Gordon and Breach Science Publishers, New York, 1989.
%
\bibitem{Takahashi}
H. Takahashi, Ehrenfest model with large jumps in finance, Phys. D. 189 (2004) 61-69.
%
\end{thebibliography}
\end{document}